\tikzstyle{vertex}=[circle, draw, inner sep=2pt, minimum size=6pt]
\providecommand{\keywords}[1]{
  \small	
  \textbf{\textit{Keywords---}} #1
}
\newtheorem{theorem}{Theorem}[section]
\newtheorem{lemma}{Lemma}[section]
\newtheorem{corollary}{Corollary}[section]
\newtheorem{proposition}{Proposition}[section]
\newtheorem{example}{Example}[section]
\newcommand{\nc}{\newcommand}
\newcommand{\overbar}[1]{\mkern 1.5mu\overline{\mkern-1.5mu#1\mkern-1.5mu}\mkern 1.5mu}
\nc{\cA}{{\cal A}}
\nc{\cB}{{\cal B}}
\nc{\cC}{{\cal C}}
\nc{\cD}{{\cal D}}
\nc{\cE}{{\cal E}}
\nc{\cG}{{\cal G}}
\nc{\cF}{{\cal F}}
\nc{\cH}{{\cal H}}
\nc{\cI}{{\cal I}}
\nc{\cK}{{\cal K}}
\nc{\cL}{{\cal L}}
\nc{\cM}{{\cal M}}
\nc{\cN}{{\cal N}}
\nc{\cO}{{\cal O}}
\nc{\cP}{{\cal P}}
\nc{\cQ}{{\cal Q}}
\nc{\cR}{{\cal R}}
\nc{\cS}{{\cal S}}
\nc{\cT}{{\cal T}}
\nc{\tx}{{\tilde x}}
\nc{\la}{{\langle}}
\nc{\ra}{{\rangle}}
\nc{\ts}{\textsuperscript}
\def\R{\mathbb{R}}
\nc{\bea}{\begin{eqnarray}}
\nc{\eea}{\end{eqnarray}}
\nc{\bean}{\begin{eqnarray*}}
\nc{\eean}{\end{eqnarray*}}
\nc{\be}{\begin{equation}}
\nc{\ee}{\end{equation}}
\nc{\ben}{\begin{equation*}}
\nc{\een}{\end{equation*}}
\nc{\ba}{\begin{array}}
\nc{\ea}{\end{array}}
\title{On Standard Quadratic Programs with Exact and Inexact Doubly Nonnegative Relaxations}
\author{Y. G\"{o}rkem G\"{o}kmen\thanks{Department of Industrial Engineering, Izmir University of Economics, 35330 Bal\c{c}ova, Izmir, Turkey. E-mail: \tt{gorkemgokmen@gmail.com}} \and E. Alper Y{\i}ld{\i}r{\i}m\thanks{School of Mathematics, Peter Guthrie Tait Road, The University of Edinburgh, Edinburgh, EH9 3FD, United Kingdom (Corresponding author). ORCID ID: 0000-0003-4141-3189 E-mail: \tt{E.A.Yildirim@ed.ac.uk}}}
\date{\today}
\begin{document}

\maketitle

\begin{abstract}
The problem of minimizing a (nonconvex) quadratic form over the unit simplex, referred to as a standard quadratic program, admits an exact convex conic formulation over the computationally intractable cone of completely positive matrices. Replacing the intractable cone in this formulation by the larger but tractable cone of doubly nonnegative matrices, i.e., the cone of positive semidefinite and componentwise nonnegative matrices, one obtains the so-called doubly nonnegative relaxation, whose optimal value yields a lower bound on that of the original problem. We present a full algebraic characterization of the set of instances of standard quadratic programs that admit an exact doubly nonnegative relaxation. This characterization yields an algorithmic recipe for constructing such an instance. In addition, we explicitly identify three families of instances for which the doubly nonnegative relaxation is exact. We establish several relations between the so-called convexity graph of an instance and the tightness of the doubly nonnegative relaxation. We also provide an algebraic characterization of the set of instances for which the doubly nonnegative relaxation has a positive gap and show how to construct such an instance using this characterization.     
\end{abstract}

\keywords{Standard quadratic programs, copositive cone, completely positive cone, doubly nonnegative relaxation}

{\bf AMS Subject Classification:} 90C20, 90C22, 90C26

\section{Introduction} \label{intro}

A standard quadratic program, which involves minimizing a
(nonconvex) quadratic form (i.e., a homogeneous quadratic
function) over the unit simplex, can be expressed as
\[
 \textrm{(StQP)} \quad \nu(Q) = \min\left\{x^T Q x: x \in \Delta_n\right\},
\]
where $Q \in \cS^n$ and $\cS^n$ denotes the space of $n \times n$ real symmetric matrices, and $ \Delta_n$ denotes the unit simplex in the $n$-dimensional Euclidean space $\R^n$, i.e., 
\begin{equation} \label{def_delta}
\Delta_n = \{x \in \R^n: e^T x = 1,~x \ge 0\},
\end{equation}
where $e \in \R^n$ is the vector of all ones. 

The standard quadratic program was singled out by Bomze \cite{ref:Bomze1998}, who also described several properties of the problem. It has many application areas such as portfolio optimization \cite{ref:markowitz1952}, population genetics \cite{ref:kingman1961}, evolutionary game theory \cite{ref:Bomze2002}, and maximum (weighted) clique problem \cite{motzkin1965maxima,ref:Gibbons1997}. Since (StQP) contains the maximum (weighted) clique problem as a special case, the problem is, in general, NP-hard.

A standard quadratic program admits an exact reformulation as a linear optimization problem over the convex cone of completely positive matrices~\cite{ref:Bomze2000} (see Section \ref{CP_and_DN_form}). Since the cone of completely positive matrices is computationally intractable~\cite{ref:DickGijb}, replacing this conic constraint by a larger but computationally tractable convex cone immediately gives rise to a relaxation, whose optimal value yields a lower bound on that of (StQP). 

In this paper, we focus on the so-called \emph{doubly nonnegative relaxation} of (StQP), which arises from replacing the cone of completely positive matrices in the aforementioned reformulation by the larger cone of doubly nonnegative matrices, i.e., the cone of positive semidefinite and componentwise nonnegative matrices. In contrast with the cone of completely positive matrices, a linear optimization problem over the cone of doubly nonnegative matrices can be solved to within an arbitrary accuracy in polynomial time. For a given optimization problem, a relaxation is said to be exact if the lower bound arising from that relaxation agrees with the optimal value of the original problem. Our main objective is to provide a characterization of the set of instances of (StQP) that admit an exact doubly nonnegative relaxation as well as a characterization of the set of instances for which the relaxation has a positive gap. Note that such characterizations shed light on instances of (StQP) that can be solved in polynomial time. Furthermore, they are helpful for identifying supporting hyperplanes of the feasible region of the convex conic reformulation of (StQP) that are common with those of the feasible region of the doubly nonnegative relaxation. 

Our contributions in this paper are as follows. 
\begin{enumerate}
    \item We present a full characterization of the set of instances of (StQP) that admit an exact doubly nonnegative relaxation (see Section~\ref{exact_dnn}).
    \item Based on this characterization, we propose a simple algorithmic recipe for generating an instance with an exact doubly nonnegative relaxation (see Section~\ref{exact_dnn}).
    \item We explicitly identify three families of instances of (StQP) with exact doubly nonnegative relaxations (see Section~\ref{subsets_of_cQ}). 
    \item We establish several relations between the maximal cliques of the so-called convexity graph of an instance and the tightness of the corresponding doubly nonnegative relaxation (see Section~\ref{max_clique_conv_graph}).
    \item We present an algebraic characterization of the set of instances of (StQP) for which the doubly nonnegative relaxation has a positive gap (see Section~\ref{pos_gap}).
    \item By using this characterization, we propose a procedure for generating an instance of (StQP) with a positive relaxation gap (see Section~\ref{pos_gap}). 
\end{enumerate}

This paper is organized as follows. We briefly review the related literature in Section~\ref{lit_rev} and define our notation in Section~\ref{notation}. In Section~\ref{prelim}, we review several known results and present the convex conic reformulation as well as the doubly nonnegative relaxation. Section~\ref{exact_dnn} is devoted to the characterization of instances of (StQP) with an exact doubly nonnegative relaxation. Using this characterization, we also a describe a procedure for generating an instance with an exact relaxation. In Section~\ref{subsets_of_cQ}, we identify three families of instances of (StQP) that admit an exact relaxation by relying on the characterization in Section~\ref{exact_dnn}. We define the convexity graph and establish several relations between the maximal cliques of this graph and the exactness of the doubly nonnegative relaxation in Section~\ref{max_clique_conv_graph}. In particular, we identify a sufficient condition that can be used to find an instance of (StQP) with an exact relaxation that is not covered by any of the three families in Section~\ref{subsets_of_cQ}. Section~\ref{pos_gap} presents an algebraic characterization of the instances of (StQP) with a positive relaxation gap and a procedure for generating such an instance. Finally, we conclude the paper in Section~\ref{conc}.

\subsection{Literature Review} \label{lit_rev}

We briefly review the related literature. A standard quadratic program can be equivalently formulated as a linear optimization problem over the cone of completely positive matrices, i.e., a copositive program~\cite{ref:Bomze2000}. Despite the fact that solving this conic reformulation remains NP-hard, it offers a fresh perspective for developing tractable approximations of (StQP) by instead focusing on tractable approximations of the cone of completely positive matrices. Relying on sum-of-squares decomposition, Parrilo~\cite{parrilo2000structured} proposed an approximation hierarchy, i.e., a sequence of nested convex cones that provide increasingly better inner approximations of the dual cone of copositive matrices, which, by duality, yields a sequence of increasingly better outer approximations of the cone of completely positive matrices. Since each of these cones can be represented by linear matrix inequalities, a linear optimization over each cone can be cast as a semidefinite program and can therefore be solved in polynomial time. In fact, the dual of the first cone in this hierarchy is precisely the cone of doubly nonnegative matrices. By exploiting weaker conditions, de Klerk and Pasechnik~\cite{de2002approximation} proposed a sequence of polyhedral cones that yield increasingly better outer approximations of the cone of completely positive matrices. For other inner and outer approximations, we refer the reader to~\cite{pena2007computing,bundfuss2009adaptive,alper2012accuracy,lasserre2014new,gouveia2019inner}.

By combining the approximations of (StQP) arising from the polyhedral approximation hierarchy of~\cite{de2002approximation} with a simple search on a finite grid on the unit simplex, Bomze and de Klerk~\cite{bomze2002solving} established a polynomial-time approximation scheme for (StQP). In~\cite{alper2012accuracy}, the second author of this paper proposed an inner polyhedral approximation hierarchy for the cone of completely positive matrices and tightened the error bound of~\cite{bomze2002solving} used to establish the polynomial-time approximation scheme. The resulting error bound also translates directly into an error bound on the gap between the optimal value of (StQP) and that of the doubly nonnegative relaxation since the hierarchy of Parrilo~\cite{parrilo2000structured} is stronger than that of~\cite{de2002approximation}. Sa\u{g}ol and Y{\i}ld{\i}r{\i}m~\cite{saugol2015analysis} studied the behavior of inner and outer polyhedral approximation hierarchies of~\cite{de2002approximation} and~\cite{alper2012accuracy} on standard quadratic programs. They presented algebraic characterizations of instances of (StQP) with exact inner and/or outer approximations at each level of these hierarchies and established several properties of such instances. In this paper, we aim to establish similar characterizations and properties of the set of instances of (StQP) that admit exact doubly nonnegative relaxations as well as those with a positive relaxation gap. Therefore, 
our focus in this paper is similar to that of~\cite{saugol2015analysis}. 

Very recently, Kim, Kojima, and Toh~\cite{KimKT2000} studied the doubly nonnegative relaxations of general copositive programs. Under the assumption that the correlative and sparsity patterns of the data matrices form a block-clique graph, they established the exactness of the doubly nonnegative relaxations. In particular, their results imply that the doubly nonnegative relaxation of any convex quadratically constrained quadratic program is exact. We note that the correlative and sparsity patterns of the data matrices of the copositive formulation of (StQP) form a complete graph, which is, indeed, a block-clique graph. On the other hand, the exactness of the doubly nonnegative relaxation in~\cite{KimKT2000} is established under the additional assumption that the size of each clique is at most four, which is only satisfied for the doubly nonnegative relaxation of instances of (StQP) with $n \leq 4$. However, for such instances, it is already known that the doubly nonnegative relaxation is exact (see Section~\ref{prelim}). Therefore, our results in this paper are not implied by the results in~\cite{KimKT2000}.

\subsection{Notation} \label{notation}

We use $\R^n, \R^n_+, \R^n_{++}$, $\R^{m \times n}$, and $\cS^n$ to denote the $n$-dimensional Euclidean space, the nonnegative orthant, the positive orthant, the set of $m \times n$ real matrices, and the space of $n \times n$ real symmetric matrices, respectively. The unit simplex in $\R^n$, given by \eqref{def_delta}, is denoted by $\Delta_n$. We reserve $e$ and $e_j$ for the vector of all ones and the $j$th unit vector, respectively. The matrix of all ones is denoted by $E = ee^T$ and $I$ denotes the identity matrix. The dimension will always be clear from the context. We use 0 to denote the real number 0, the vector of all zeroes, as well as the matrix of all zeroes. We use calligraphic letters to denote the subsets of $\cS^n$. We use uppercase boldface Roman or uppercase Greek letters to denote the subsets of $\R^n$. We use uppercase letters both for matrices and index sets, and lower case letters to denote vectors, dimensions, and indices of vectors and matrices. Scalars will be denoted by lowercase Greek letters, with the exception of $\ell(Q)$ that denotes the lower bound arising from the doubly nonnegative relaxation. For an index set $A \subseteq \{1,\ldots,n\}$, we denote by $|A|$ the cardinality of $A$. For $x \in \R^n$, $Q \in \cS^n$, $A \subseteq \{1,\ldots,n\}$, and $B \subseteq \{1,\ldots,n\}$, we denote by $x_A \in \R^{|A|}$ the subvector of $x$ restricted to the indices in $A$ and by $Q_{AB}$ the submatrix of $Q$ whose rows and columns are indexed by $A$ and $B$, respectively. Therefore, $Q_{AA}$ denotes a principal submatrix of $Q$. We use the simplified notations $x_j$ and $Q_{ij}$ for singleton index sets. For $v \in \R^n$, $v^\perp$ denotes the orthogonal complement of $v$. For any $U \in \R^{m \times n}$ and $V \in \R^{m \times n}$, the trace inner product is denoted by 
\[
\langle U, V \rangle := \sum\limits_{i=1}^m \sum\limits_{j = 1}^n U_{ij} V_{ij}.
\]

For an instance of (StQP) with $Q \in \cS^n$, we denote by $\nu(Q)$ the optimal value, and the set of optimal solutions is denoted by
\begin{equation} \label{def_Omega}
\Omega(Q) = \{x \in \Delta_n: x^T Q x = \nu(Q)\}. 
\end{equation}
For a given $x \in \Delta_n$, we define the following index sets:
\begin{eqnarray}
A(x) & = & \left\{j \in \{1,\ldots,n\}: x_j > 0 \right\}, \label{def_P}\\
Z(x) & = & \left\{j \in \{1,\ldots,n\}: x_j = 0 \right\}. \label{def_Z}
\end{eqnarray}

\section{Preliminaries} \label{prelim}

In this section, we review several known results from the literature and present the copositive formulation of a standard quadratic program as well as the doubly nonnegative relaxation. 

\subsection{Convex Cones}

We define the following cones in {$\cS^n$}:
\begin{eqnarray}
\label{def_N}
\cN^n & = & \left\{M \in \cS^n: M_{ij} \geq 0, \quad i = 1,\ldots,n;~j = 1,\ldots,n\right\}, \\
\label{def_PSD}
{\cal PSD}^n & = & \left\{M \in \cS^n: u^T M u \geq 0, \quad \forall u \in \R^n \right\}, \\
\label{def_COP}
{\cal COP}^n & = & \left\{M \in \cS^n: u^T M u \geq 0, \quad \forall u \in \R^n_+ \right\}, \\
\label{def_CP}
{\cal CP}^n & = & \left\{M \in \cS^n: M = \sum\limits_{k=1}^r b^k (b^k)^T, \quad \textrm{for some}~b^k \in \R^n_+,~k = 1,\ldots,r\right\}, \\
\label{def_DN}
{\cal DN}^n & = & {\cal PSD}^n \cap \cN^n, \\
\label{def_SPN}
{\cal SPN}^n & = &  \left\{M \in \cS^n: M = M_1 + M_2, \quad \textrm{for some}~M_1 \in {\cal PSD}^n,~M_2 \in \cN^n\right\},
\end{eqnarray}
namely, $\cN^n$ is the cone of componentwise nonnegative matrices, ${\cal PSD}^n$ is the cone of positive semidefinite matrices, ${\cal COP}^n$ is the cone of copositive matrices, ${\cal CP}^n$ is the cone of completely positive matrices, ${\cal DN}^n$ is the cone of doubly nonnegative matrices, and ${\cal SPN}^n$ is the cone of SPN matrices. i.e., the cone of matrices that can be decomposed into the sum of a positive semidefinite and a componentwise nonnegative matrix. Each of these cones is closed, convex, full-dimensional, and pointed, and the following set of inclusion relations is satisfied:
\begin{equation} \label{inc_rels}
{\cal CP}^n \subseteq {\cal DN}^n \subseteq \left\{ \begin{matrix} \cN^n \\ {\cal PSD}^n \end{matrix} \right\} \subseteq {\cal SPN}^n \subseteq {\cal COP}^n.
\end{equation}

By~\cite{ref:Diananda},
\begin{equation} \label{diananda}
{\cal CP}^n = {\cal DN}^n, \quad \textrm{and} \quad {\cal SPN}^n = {\cal COP}^n \quad \textrm{if and only if} \quad n \leq 4.
\end{equation}
For $n \geq 5$, checking membership is NP-hard for both ${\cal CP}^n$~\cite{ref:DickGijb} and ${\cal COP}^n$~\cite{ref:MurtKab}. Each of the remaining four cones is tractable in the sense that they admit polynomial-time membership oracles.

The following lemma collects several results that will be useful throughout the paper.

\begin{lemma} \label{lem_gen_rels}
Let ${\cal K}^n \in \left\{ {\cal CP}^n, {\cal DN}^n, \cN^n, {\cal PSD}^n, {\cal SPN}^n, {\cal COP}^n\right\}$. Then, the following relations are satisfied:
\begin{enumerate}
\item[(i)] If $U \in {\cal K}^n$, then $U_{kk} \geq 0,~k = 1,\ldots,n$.
\item[(ii)] $U \in {\cal K}^n$ if and only if $J^T U J \in {\cal K}^n$, where $J \in \R^{n \times n}$ is a permutation matrix.
\item[(iii)] $U  \in {\cal K}^n$ if and only if $D U D  \in {\cal K}^n$, where $D \in {\cal S}^n$ is a diagonal matrix with positive diagonal entries.
\item[(iv)] If $U \in {\cal K}^n$, then every principal $r \times r$ submatrix of $U$ is in ${\cal K}^r$, $r = 1,\ldots,n$.
\item[(v)] If $U_1 \in {\cal K}^n$ and $U_2 \in {\cal K}^m$, then 
\begin{equation} \label{direct_sum}
U_1 \oplus U_2 = \begin{bmatrix} U_1 & 0 \\0 & U_2 \end{bmatrix} \in {\cal K}^{n+m} .
\end{equation}
In particular, $U_2 = 0$ can be chosen.
\end{enumerate}
\end{lemma}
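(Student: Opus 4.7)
The plan is to reduce the verification to the four ``primitive'' cones $\cN^n$, ${\cal PSD}^n$, ${\cal COP}^n$, and ${\cal CP}^n$, and then to check each property directly from the definitions. The reduction works because ${\cal DN}^n = {\cal PSD}^n \cap \cN^n$ and ${\cal SPN}^n = {\cal PSD}^n + \cN^n$, so each of properties (ii)--(v), which involves a linear or combinatorial operation that distributes over intersection and Minkowski sum, and (i), which transfers because a sum of nonnegatives is nonnegative, immediately extend from ${\cal PSD}^n$ and $\cN^n$ to ${\cal DN}^n$ and ${\cal SPN}^n$.

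For part (i), the claim is immediate for $\cN^n$, follows by plugging $u = e_k$ into the defining quadratic inequality of ${\cal PSD}^n$ or ${\cal COP}^n$ (noting $e_k \in \R^n_+$), and is read off from $U_{kk} = \sum_\ell (b^\ell_k)^2$ in the completely positive decomposition. For parts (ii) and (iii), I would exploit the identities $u^T (J^T U J) u = (Ju)^T U (Ju)$ and $u^T (DUD) u = (Du)^T U (Du)$, combined with the observation that $u \mapsto Ju$ and $u \mapsto Du$ are bijections on both $\R^n$ and $\R^n_+$ (permutations preserve nonnegativity and $D$ has strictly positive diagonal), which handles ${\cal PSD}^n$ and ${\cal COP}^n$; the case of ${\cal CP}^n$ follows from the rewritings $J^T U J = \sum_\ell (J^T b^\ell)(J^T b^\ell)^T$ and $DUD = \sum_\ell (D b^\ell)(D b^\ell)^T$ with $J^T b^\ell, D b^\ell \in \R^n_+$; and the $\cN^n$ case is entry-wise. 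Both directions of the equivalences come for free because $J^{-1} = J^T$ is still a permutation matrix and $D^{-1}$ is still diagonal with positive entries.

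For part (iv), given $A \subseteq \{1,\ldots,n\}$ with $|A| = r$, any test vector $v \in \R^r$ (respectively $\R^r_+$) extends by zero-padding outside $A$ to a vector $u \in \R^n$ (respectively $\R^n_+$) satisfying $v^T U_{AA} v = u^T U u$, handling ${\cal PSD}^r$ and ${\cal COP}^r$; for ${\cal CP}^r$ one restricts each summand to the indices in $A$, yielding $U_{AA} = \sum_\ell (b^\ell_A)(b^\ell_A)^T$; and $\cN^r$ is immediate. For part (v), decomposing $u \in \R^{n+m}$ as $(u_1, u_2)$ along the block structure gives $u^T (U_1 \oplus U_2) u = u_1^T U_1 u_1 + u_2^T U_2 u_2$, which resolves the ${\cal PSD}$ and ${\cal COP}$ cases (preserving the orthant restriction in the latter); padding the completely positive rank-one summands of $U_1$ and $U_2$ with zeros in the complementary block resolves ${\cal CP}^{n+m}$; and $\cN^{n+m}$ is immediate. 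The specialization ``$U_2 = 0$ can be chosen'' uses the fact that the zero matrix lies in every one of the six cones.

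The main obstacle is not conceptual but a matter of careful bookkeeping across six cones and five properties. The delicate cases are those involving ${\cal COP}^n$ and ${\cal CP}^n$, where the defining quantifier (respectively, the class of summands) is restricted to $\R^n_+$; the crucial observation in each case is that the operations in question---permutation, positive diagonal scaling, coordinate restriction, and zero-padding---all preserve the nonnegative orthant, which is precisely what is needed to keep the restricted quantifier or summand class intact.
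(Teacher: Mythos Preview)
Your proposal is correct and complete. The paper states this lemma without proof, treating the five properties as standard facts about these matrix cones; your argument supplies exactly the routine verifications one would expect, and the reduction to the four ``primitive'' cones via ${\cal DN}^n = {\cal PSD}^n \cap \cN^n$ and ${\cal SPN}^n = {\cal PSD}^n + \cN^n$ is a clean way to organize the bookkeeping.
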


\subsection{Copositive Formulation and Doubly Nonnegative Relaxation} \label{CP_and_DN_form}

(StQP) can be formulated as a copositive program~\cite{ref:Bomze2000}, i.e., a linear optimization problem over an affine subset of the convex cone of completely positive matrices:
\[
 \textrm{(CP)} \quad \nu(Q) = \min\{\la Q, X \ra: \la E, X \ra =
 1, \quad X \in {\cal CP}^n\},
\]
where $X \in \cS^n$.

By \eqref{inc_rels}, we can replace the intractable conic constraint $X \in {\cal CP}^n$ by $X \in {\cal DN}^n$ and obtain a relaxation of (CP), or, equivalently, a relaxation of (StQP):
\[
 \textrm{(DN-P)} \quad \ell(Q) = \min\left\{\langle Q, X \rangle: \langle E, X \rangle = 1, \quad X \in {\cal DN}^n \right\},
\]
(DN-P) is referred to as the {\em doubly nonnegative relaxation} of (StQP). The Lagrangian dual problem of (DN-P) is given by
\[
\textrm{(DN-D)} \quad \ell(Q) = \max\left\{\sigma: \sigma E + S = Q, \quad S \in {\cal SPN}^n \right\},
\]
where $\sigma \in \R$ and $S \in \cS^n$. It is well-known that both (DN-P) and (DN-D) satisfy the Slater's condition, which implies that strong duality is satisfied, and that optimal solutions are attained in both (DN-P) and (DN-D).

For all $Q \in \cS^n$, we have 
\begin{equation} \label{lb_dnn}
\ell(Q) \leq \nu(Q),
\end{equation}
since ${\cal CP}^n \subseteq {\cal DN}^n$. For $n \leq 4$, we have $\ell(Q) = \nu(Q)$ by \eqref{diananda}. For $n \geq 5$, we are interested in the characterization of instances of (StQP) for which $\ell(Q) = \nu(Q)$ as well as those with $\ell(Q) < \nu(Q)$.

The following lemma presents a simple shift invariance property that will be useful throughout the remainder of the paper.

\begin{lemma} \label{shift_invariance}
For any $Q \in \cS^n$ and any $\lambda \in \R$,
\begin{eqnarray}
\label{shift1}
\nu(Q + \lambda E) & = & \nu(Q) + \lambda, \\
\label{shift2}
\ell(Q + \lambda E) & = & \ell(Q) + \lambda.
\end{eqnarray}
Furthermore, $\Omega(Q) = \Omega(Q + \lambda E)$.
\end{lemma}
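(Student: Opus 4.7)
The plan is to prove all three assertions by a direct substitution argument, exploiting the fact that every feasible point of the primal problems (either (StQP) itself or (DN-P)) satisfies a normalization constraint that turns the $\lambda E$ term into an additive constant $\lambda$.

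First I would handle \eqref{shift1} and the claim about $\Omega$ simultaneously. For any $x \in \Delta_n$, one has $e^T x = 1$, hence
\[
x^T (Q + \lambda E) x \;=\; x^T Q x + \lambda\, (e^T x)^2 \;=\; x^T Q x + \lambda.
\]
Thus the objective function of (StQP) with data $Q + \lambda E$ differs from that with data $Q$ by the constant $\lambda$ on all of $\Delta_n$. Taking minima over $\Delta_n$ yields \eqref{shift1}, and since a constant shift preserves the argmin set, $\Omega(Q + \lambda E) = \Omega(Q)$.

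Next, for \eqref{shift2}, I would use the (DN-P) formulation. Any feasible $X$ satisfies $\langle E, X \rangle = 1$, so
\[
\langle Q + \lambda E, X \rangle \;=\; \langle Q, X \rangle + \lambda \langle E, X \rangle \;=\; \langle Q, X \rangle + \lambda.
\]
Hence the objective of (DN-P) with data $Q + \lambda E$ again differs from that with data $Q$ by the constant $\lambda$ on the common feasible set $\{X \in \cDN^n : \langle E, X \rangle = 1\}$, which gives \eqref{shift2} upon taking minima. Alternatively, one could argue on the dual side: if $(\sigma, S)$ is feasible for (DN-D) with data $Q$, then $(\sigma + \lambda, S)$ is feasible for (DN-D) with data $Q + \lambda E$, and vice versa, yielding the same identity.

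There is no real obstacle here; the lemma is essentially a bookkeeping consequence of the constraints $e^T x = 1$ and $\langle E, X \rangle = 1$, and the proof is a two-line computation in each case.
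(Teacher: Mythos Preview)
Your proposal is correct and matches the paper's own proof almost verbatim: both exploit the normalization constraints $e^Tx=1$ and $\langle E,X\rangle=1$ to turn $\lambda E$ into the additive constant $\lambda$ on the feasible set, then conclude by taking minima. The only cosmetic difference is that the paper phrases \eqref{shift1} via the (CP) reformulation rather than (StQP) directly, but this is the same computation.
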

\begin{proof}
The relations \eqref{shift1} and \eqref{shift2} immediately follow from the formulations (CP) and (DN-P), respectively, since $\langle Q + \lambda E, X \rangle = \langle Q, X \rangle + \lambda \langle E, X \rangle = \langle Q, X \rangle + \lambda$ for any $X \in \cS^n$ such that $\langle E, X \rangle = 1$. The last assertion directly follows from the observation that
\[
 x^T(Q + \lambda E) x = x^TQ x + \lambda x^T E x =  x^T Q x + \lambda (e^T x)^2  = x^T Q x + \lambda
\]
for any $\lambda \in \R$ and $x \in \Delta_n$.
\end{proof}

By Lemma~\ref{shift_invariance}, if $\ell(Q) = \nu(Q)$ for a given $Q \in \cS^n$, note that $\ell(Q + \lambda E) = \nu(Q + \lambda E)$ for any $\lambda \in \R$. We will repeatedly use this observation in the remainder of the manuscript. 

\subsection{Local Optimality Conditions}

In this section, we review the local optimality conditions of (StQP). 

Given an instance of (StQP), $x \in \R^n$ is a local minimizer if and only if there exists $s \in \R^n$ such that the following conditions are satisfied (see, e.g.,~\cite{ref:majthay1971,ref:jiaquan1982}):
\begin{eqnarray} \label{kkt}
Q x - \left( x^T Q x \right) e - s & = & 0, \label{eq1}\\
e^T x & = & 1, \label{eq2} \\
x & \in & \R^n_+, \label{eq3} \\
s & \in & \R^n_+, \label{eq4} \\
x_j s_j & = & 0, \quad j = 1,\ldots,n, \label{eq5} \\
d^T Q d & \geq & 0, \quad \textrm{for all}~d \in \mathbf{D}(x), \label{eq6}
\end{eqnarray}
where 
\begin{equation} \label{def_D}
\mathbf{D}(x) = \left\{ d \in \R^n: e^T d = 0, \quad d^T Q x = 0, \quad d_j \geq 0, \quad \textrm{for each} ~j \in Z(x)\right\},
\end{equation}
and $Z(x)$ is given by \eqref{def_Z}. We remark that the Lagrange multipliers $\mu \in \R$ and $s \in \R^n$ corresponding to the constraints $e^T x = 1$ and $x \geq 0$, respectively, are both scaled by $1/2$ and the former is replaced by $x^T Q x$ in \eqref{eq1} by using \eqref{eq2} and \eqref{eq5}.

Note that (\ref{eq1}) -- (\ref{eq5}) are the KKT conditions and any $x \in \Delta_n$ that satisfies these conditions is said to be a {\em KKT point}.

For any KKT point $x \in \R^n$, \eqref{eq6} captures the second order optimality conditions. Note that $\mathbf{D}(x)$ consists of all feasible directions at $x$ that are orthogonal to the gradient of the objective function at $x$. Furthermore, 
\begin{equation} \label{rels_D}
 \mathbf{D}_*(x) \subseteq \mathbf{D}(x) \subseteq  \mathbf{D}^*(x),  
\end{equation}
where 
\begin{eqnarray} 
 \mathbf{D}_*(x) & = & \left\{d \in \R^n: e^T d = 0, \quad d_j = 0, \quad \textrm{for each} ~j \in Z(x)\right\} \label{def_D_inner},\\ 
 \mathbf{D}^*(x) & = & \left\{d \in \R^n: e^T d = 0 \right\}. \label{def_D_outer}  
\end{eqnarray}

\subsection{Global Optimality Conditions}
 
First, we note that the membership problem in ${\cal COP}^n$ can be cast in the form of (StQP) since $Q \in {\cal COP}^n$ if and only if $\nu(Q) \geq 0$. The following theorem establishes that checking the global optimality condition in (StQP) conversely reduces to a membership problem in ${\cal COP}^n$. We include a short proof for the sake of completeness.

\begin{theorem}[Bomze, 1992] \label{nec_suff_stqp}
Let $Q \in \cS^n$ and let $x^* \in \Delta_n$. Then,
\begin{equation} \label{nsc1}
x^* \in \Omega(Q) \quad \textrm{if and only if} \quad Q - \left((x^*)^T Q x^* \right) E \in {\cal COP}^n.
\end{equation}
\end{theorem}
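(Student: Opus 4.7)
The plan is to exploit the homogeneity of the quadratic form $x \mapsto x^T(Q - \nu^* E)x$ together with the fact that $\langle E, xx^T\rangle = (e^T x)^2$, which converts membership in $\mathcal{COP}^n$ into a statement about $x^T Q x - \nu^*$ on $\Delta_n$. Write $\nu^* = (x^*)^T Q x^*$ and observe that, for any nonnegative vector $u \in \R^n_+$, the key identity is
\[
u^T(Q - \nu^* E)u = u^T Q u - \nu^*(e^T u)^2.
\]
This single identity should handle both directions of the equivalence.

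For the forward direction, I would assume $x^* \in \Omega(Q)$, so that $\nu^* = \nu(Q)$. Given $u \in \R^n_+$, the case $u = 0$ is immediate; otherwise I would set $s = e^T u > 0$ and $x = u/s \in \Delta_n$, then use $u^T Q u = s^2\, x^T Q x \ge s^2\, \nu(Q) = s^2 \nu^*$ and $(e^T u)^2 = s^2$, so that the identity yields $u^T(Q - \nu^* E)u \ge 0$. Hence $Q - \nu^* E \in \mathcal{COP}^n$.

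For the reverse direction, suppose $Q - \nu^* E \in \mathcal{COP}^n$. Apply the identity to an arbitrary $x \in \Delta_n \subseteq \R^n_+$ and use $e^T x = 1$ to get $x^T Q x - \nu^* \ge 0$, i.e.\ $x^T Q x \ge \nu^*$ for every $x \in \Delta_n$. Combined with $x^* \in \Delta_n$ attaining the value $\nu^*$, this gives $\nu(Q) = \nu^*$ and $x^* \in \Omega(Q)$.

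There is essentially no obstacle here: both implications collapse to the same homogenization trick that rescales a nonnegative vector to a simplex point. The only mild care needed is the $u = 0$ case in the forward direction, and to note at the end of the reverse direction that the inequality $x^T Q x \ge \nu^*$ on $\Delta_n$, together with the attainability of $\nu^*$ at $x^*$, pins down $\nu(Q)$ rather than merely bounding it.
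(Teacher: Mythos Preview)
Your proof is correct and is essentially the same as the paper's: both directions rest on the identity $x^T(Q-\nu^*E)x = x^TQx - \nu^*(e^Tx)^2$, with the reverse direction argued identically. The only cosmetic difference is that for the forward direction the paper packages the rescaling step via the shift-invariance Lemma~\ref{shift_invariance} (concluding $\nu(Q-\nu^*E)=0$), whereas you carry out the normalization $u\mapsto u/(e^Tu)$ explicitly.
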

\begin{proof}
Let $x^* \in \Omega(Q)$. Consider $Q^\prime = Q - \left((x^*)^T Q x^* \right) E \in \cS^n$. Then, by Lemma~\ref{shift_invariance}, 
$\nu(Q^\prime) = \nu \left(  Q - \left((x^*)^T Q x^* \right) E \right) = \nu(Q) - \left((x^*)^T Q x^* \right) = \nu(Q) - \nu(Q) = 0$, which implies that $Q^\prime \in {\cal COP}^n$.

Conversely, suppose that $Q - \left( (x^*)^T Q x^* \right) E \in {\cal COP}^n$. Then, for any $x \in \Delta_n$, we have $x^T \left(Q - \left( (x^*)^T Q x^* \right) E \right) x = x^T Q x -  (x^*)^T Q x^* \geq 0$, where we used $x^T E x = (e^Tx)^2 = 1$, which implies that $\nu(Q) = (x^*)^T Q x^*$, i.e., $x^* \in \Omega(Q)$. 
\end{proof}

\section{Standard Quadratic Programs with Exact Doubly Nonnegative  Relaxations} \label{exact_dnn}

In this section, we focus on the set of instances of (StQP) which admit an exact doubly nonnegative relaxation. To that end, let us define
\begin{equation} \label{def_Q}
\cQ^n := \left\{Q \in \cS^n: \ell(Q) = \nu(Q) \right\}.
\end{equation}
We will present alternative characterizations of $\cQ^n$. These characterizations will subsequently be used for identifying several sufficient conditions for membership in $\cQ^n$.

First, given $x \in \Delta_n$, we define the following set of matrices:
\begin{equation} \label{def_Sx}
{\cal S}_x = \left\{Q \in \cS^n: x \in \Omega(Q)\right\} = \left\{Q \in \cS^n: Q - \left(x^T Q x \right) E \in {\cal COP}^n \right\},
\end{equation}
i.e., ${\cal S}_x$ consists of all matrices $Q \in \cS^n$ for which $x \in \Delta_n$ is an optimal solution of the corresponding (StQP) instance. Note that the second equality in \eqref{def_Sx} is a consequence of Theorem~\ref{nec_suff_stqp}. 

Let us define the following line in $\cS^n$, which will frequently arise in the remainder of the paper:
\begin{equation} \label{def_L}
\cL = \left\{\lambda E: \lambda \in \R\right\}.
\end{equation}

For each $x \in \Delta_n$, it is easy to verify that ${\cal S}_x$ is a closed and convex cone in $\cS^n$ and 
\begin{equation} \label{Sx_line}
    \cL \subseteq \cS_x, \quad \textrm{for each}~x \in \Delta_n.
\end{equation}
Furthermore,
\begin{equation} \label{union_Sx}
    \bigcup\limits_{x \in \Delta_n} {\cal S}_x = \cS^n.
\end{equation}

Next, we focus on the characterization of the set of matrices in ${\cal S}_x$ that admit an exact doubly nonnegative relaxation, i.e., 
\begin{equation} \label{def_Qx}
\cQ_x = \cS_x \cap \cQ^n = \left\{Q \in \cS^n: x \in \Omega(Q), \quad \ell(Q) = \nu(Q)\right\}.    
\end{equation}
The following lemma presents a complete characterization of $\cQ_x$.

\begin{lemma} \label{charac_1}
For any $x \in \Delta_n$, 
\begin{equation} \label{Q_c1}
\cQ_x = \left\{Q \in \cS^n: Q - \left(x^T Q x \right) E \in {\cal SPN}^n \right\}.
\end{equation}
\end{lemma}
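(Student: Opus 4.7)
The plan is to prove both inclusions by exploiting the interplay between the dual formulation (DN-D), the characterization of $\cS_x$ via Theorem~\ref{nec_suff_stqp}, and the inclusion $\cSPN^n \subseteq \cCOP^n$ from \eqref{inc_rels}. The key observation is that membership of $Q - (x^T Q x)E$ in $\cSPN^n$ simultaneously certifies (i) feasibility of a particular dual solution for (DN-D) and (ii) the copositivity condition needed for $x \in \Omega(Q)$.

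For the forward direction ($\subseteq$), I would start from $Q \in \cQ_x$, which means $x \in \Omega(Q)$ and $\ell(Q) = \nu(Q)$. Since $x \in \Omega(Q)$, we have $\nu(Q) = x^T Q x$, so $\ell(Q) = x^T Q x$. By the strong duality and attainment discussed after the definition of (DN-D), there exist $\sigma^* \in \R$ and $S^* \in \cSPN^n$ with $\sigma^* = \ell(Q)$ and $\sigma^* E + S^* = Q$. Rearranging gives $S^* = Q - \ell(Q) E = Q - (x^T Q x) E \in \cSPN^n$, which places $Q$ in the right-hand side of \eqref{Q_c1}.

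For the reverse direction ($\supseteq$), suppose $Q \in \cS^n$ satisfies $Q - (x^T Q x) E \in \cSPN^n$. Setting $\sigma := x^T Q x$ and $S := Q - (x^T Q x) E$ yields a feasible point for (DN-D), so $\ell(Q) \geq x^T Q x$. On the other hand, since $\cSPN^n \subseteq \cCOP^n$ by \eqref{inc_rels}, we have $Q - (x^T Q x) E \in \cCOP^n$, so Theorem~\ref{nec_suff_stqp} gives $x \in \Omega(Q)$ and hence $\nu(Q) = x^T Q x$. Combining this with \eqref{lb_dnn} yields
\[
x^T Q x \;\leq\; \ell(Q) \;\leq\; \nu(Q) \;=\; x^T Q x,
\]
so $\ell(Q) = \nu(Q)$ and $Q \in \cQ_x$.

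There is no substantive obstacle here: the argument is essentially a bookkeeping exercise that threads together strong duality for (DN-P)/(DN-D), Theorem~\ref{nec_suff_stqp}, and the inclusion $\cSPN^n \subseteq \cCOP^n$. The only point worth being careful about is invoking attainment in (DN-D), which is justified by the Slater condition mentioned in the preliminaries; one should explicitly note this rather than leave it implicit.
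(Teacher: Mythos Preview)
Your proof is correct and follows essentially the same route as the paper's own argument: both directions hinge on dual attainment in (DN-D) for the forward inclusion and on feasibility in (DN-D) together with the inclusion ${\cal SPN}^n \subseteq {\cal COP}^n$ (via Theorem~\ref{nec_suff_stqp}) for the reverse inclusion. Your explicit remark about invoking Slater's condition for attainment is a nice touch that the paper leaves implicit.
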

\begin{proof}
We prove the relation \eqref{Q_c1} by showing that each set is a subset of the other one. Let $x \in \Delta_n$ and let $Q \in \cQ_x$. By \eqref{def_Qx}, $Q \in \cS_x$ and $Q \in \cQ^n$, i.e., $\ell(Q) = \nu(Q) = x^T Q x$. Then, since optimal solutions are attained in (DN-D), there exists $S^* \in {\cal SPN}^n$ such that $\nu(Q)E + S^* = Q$, which implies that $Q - \nu(Q) E = Q - \left(x^T Q x \right) E \in {\cal SPN}^n$.

Conversely, for a given $x \in \Delta_n$, if $Q - \left(x^T Q x \right) E \in {\cal SPN}^n$, then $Q \in \cS_x$ by \eqref{inc_rels} and \eqref{def_Sx}, and $\nu(Q) = x^T Q x$. Furthermore, let $\sigma = x^T Q x$ and $S = Q - \sigma E$. Then, $(\sigma,S)$ is a feasible solution of (DN-D), which implies that $\ell(Q) \geq x^T Q x = \nu(Q)$ since (DN-D) is a maximization problem. Combining this inequality with \eqref{lb_dnn}, we obtain $\ell(Q) = \nu(Q)$, i.e., $Q \in \cQ^n$. We therefore obtain $Q \in \cQ_x$.
\end{proof}

By Lemma~\ref{charac_1}, for any $x \in \Delta_n$ and $Q \in \cS^n$, one can check if $Q \in \cQ_x$ in polynomial time by solving a semidefinite program. Similar to $\cS_x$, it is easy to verify that $\cQ_x$ is a closed convex cone and
\begin{equation} \label{Qx_line}
    \cL \subseteq \cQ_x, \quad \textrm{for each}~x \in \Delta_n,
\end{equation}
where $\cL$ is given by \eqref{def_L}.

Next, for a given $x \in \Delta_n$, we aim to present an alternative and more useful characterization of $\cQ_x$ that would enable us to construct a matrix $Q \in \cQ_x$. To that end, we identify the following subsets, which will be the building blocks for the set $\cQ_x$:
\begin{eqnarray} \label{building_blocks}
\cP_x & = & \left\{P \in {\cal PSD}^n: x^T P x = 0\right\} = \left\{P \in {\cal PSD}^n: P x = 0\right\}
 \label{def_Px}\\
\cN_x & = & \left\{N \in \cN^n: x^T N x = 0 \right\} = \left\{N \in \cN^n: N_{ij} = 0, \quad i \in {A}(x),~j \in {A}(x)\right\}, \label{def_Nx}
\end{eqnarray}
where $A(x)$ is defined as in \eqref{def_P}.

For each $x \in \Delta_n$, note that $\cP_x$ is a face of ${\cal PSD}^n$ and $\cN_x$ is a polyhedral cone in $\cN^n$. Furthermore, for each $P \in \cP_x$ and for each $N \in \cN_x$, we have $P - (x^T P x) E = P \in {\cal SPN}^n$ and $N - (x^T N x) E = N \in {\cal SPN}^n$ by \eqref{inc_rels}. By Lemma~\ref{charac_1}, we therefore obtain
\begin{equation} \label{inc_rel_2}
\cP_x + \cN_x \subseteq \cQ_x \subseteq \cS_x, \quad \textrm{for each}~x \in \Delta_n.
\end{equation}

The next proposition presents a complete characterization of $\cQ_x$ by establishing a useful relation between $\cQ_x$ and the sets $\cN_x$ and $\cP_x$.

\begin{proposition} \label{charac_2}
For each $x \in \Delta_n$, 
\begin{equation} \label{alg_char_Qx}
\cQ_x = \cP_x + \cN_x + \cL,
\end{equation}
where $\cP_x$, $\cN_x$, and $\cL$ are defined as in \eqref{def_Px}, \eqref{def_Nx}, and \eqref{def_L}, respectively. Furthermore, for any decomposition of $Q \in \cQ_x$ given by $Q = P + N + \lambda E$, where $P \in \cP_x$, $N \in \cN_x$, and $\lambda \in \R$, we have $\lambda = x^T Q x = \ell(Q) = \nu(Q)$.
\end{proposition}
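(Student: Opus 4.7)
The plan is to use Lemma~\ref{charac_1} as the bridge, so that membership in $\cQ_x$ is equivalent to the matrix $Q - (x^T Q x) E$ admitting an SPN decomposition, and then to upgrade an arbitrary SPN decomposition into one whose summands live in $\cP_x$ and $\cN_x$, respectively.

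For the inclusion $\cP_x + \cN_x + \cL \subseteq \cQ_x$, I would simply take $Q = P + N + \lambda E$ with $P \in \cP_x$, $N \in \cN_x$, and $\lambda \in \R$, compute $x^T Q x = x^T P x + x^T N x + \lambda (e^T x)^2 = \lambda$ using the defining properties of $\cP_x$ and $\cN_x$, and observe that $Q - (x^T Q x) E = P + N \in {\cal SPN}^n$ by definition. Lemma~\ref{charac_1} then gives $Q \in \cQ_x$.

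The reverse inclusion $\cQ_x \subseteq \cP_x + \cN_x + \cL$ is where the real work lies. Given $Q \in \cQ_x$, Lemma~\ref{charac_1} provides some decomposition $Q - (x^T Q x) E = P + N$ with $P \in {\cal PSD}^n$ and $N \in \cN^n$, so that $Q = P + N + \lambda E$ with $\lambda = x^T Q x$. The key step, and the one I expect to be the main (though still short) obstacle, is showing that this $P$ and $N$ automatically satisfy $P \in \cP_x$ and $N \in \cN_x$. For this I would evaluate $x^T(Q - (x^T Q x)E)x = x^T Q x - (x^T Q x)(e^T x)^2 = 0$, which yields $x^T P x + x^T N x = 0$. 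Since $x \geq 0$, $P \succeq 0$, and $N$ is componentwise nonnegative, both terms are individually nonnegative, hence both vanish. The identity $x^T P x = 0$ together with $P \in {\cal PSD}^n$ forces $Px = 0$, so $P \in \cP_x$ by \eqref{def_Px}. The identity $x^T N x = \sum_{i,j} x_i N_{ij} x_j = 0$ with all summands nonnegative forces $N_{ij} = 0$ whenever $i, j \in A(x)$, so $N \in \cN_x$ by \eqref{def_Nx}.

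For the final assertion, any decomposition $Q = P + N + \lambda E$ with $P \in \cP_x$ and $N \in \cN_x$ immediately gives $x^T Q x = \lambda$ by the same computation as in the forward inclusion; since $Q \in \cQ_x$, the definitions \eqref{def_Qx} and \eqref{def_Q} yield $\lambda = x^T Q x = \nu(Q) = \ell(Q)$. In particular, while the decomposition $P + N$ itself need not be unique, the scalar $\lambda$ is uniquely determined by $Q$ and $x$.
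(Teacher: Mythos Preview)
Your proof is correct and follows essentially the same approach as the paper: both directions hinge on Lemma~\ref{charac_1}, and the key step for $\cQ_x \subseteq \cP_x + \cN_x + \cL$ is exactly the computation $x^T P x + x^T N x = 0$ forcing each term to vanish. The only cosmetic difference is that for the inclusion $\cP_x + \cN_x + \cL \subseteq \cQ_x$ the paper invokes the already-recorded facts \eqref{inc_rel_2} and \eqref{Qx_line} together with convexity of $\cQ_x$, whereas you verify membership directly via Lemma~\ref{charac_1}; these amount to the same argument.
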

\begin{proof}
Let $x \in \Delta_n$ and $Q \in \cQ_x$. Then, by Lemma~\ref{charac_1},
\[
Q - \left(x^T Q x \right) E = P + N,
\]
where $P \in {\cal PSD}^n$ and $N \in \cN^n$. Therefore, 
\[
0 = x^T Q x - \left(x^T Q x \right) \left(x^T E x \right) =  x^T P x + x^T N x,
\]
where we used $x^T E x = (e^T x)^2 = 1$, which implies that $x^T P x = x^T N x = 0$ since both terms are nonnegative. Therefore, we obtain 
\[
Q = P + N + \left(x^T Q x \right) E,
\]
where $P \in \cP_x$ and $N \in \cN_x$. It follows that $Q \in \cP_x + \cN_x + \cL$. 

Conversely, since $\cP_x + \cN_x \subseteq \cQ_x$ by \eqref{inc_rel_2}, $\cL \subseteq \cQ_x$ by \eqref{Qx_line}, and $\cQ_x$ is a convex cone, it follows that $\cP_x + \cN_x + \cL \subseteq \cQ_x$, which establishes \eqref{alg_char_Qx}.

For the last assertion, let $Q \in \cQ_x$ be decomposed as $Q = P + N + \lambda E$, where $P \in \cP_x$, $N \in \cN_x$, and $\lambda \in \R$. Then, $x^T Q x = x^T P x + x^T N x + \lambda$, which implies that $x^T Q x = \lambda$. Since $Q \in \cQ^n$ and $\cQ_x \subseteq \cS_x$, we obtain $\lambda = x^T Q x = \ell(Q) = \nu(Q)$.
\end{proof}

We remark that Proposition~\ref{charac_2} gives a complete characterization of $\cQ_x$ for each $x \in \Delta_n$. In addition, it gives a recipe to construct a matrix in $\cQ_x$. Indeed, for any $x \in \Delta_n$, one simply needs to generate two matrices $P \in \cP_x$, $N \in \cN_x$, a real number $\lambda$, and define $Q = P + N + \lambda E$. By Proposition~\ref{charac_2}, this is necessary and sufficient to ensure that $Q \in \cQ_x$ with $\ell(Q) = \nu(Q) = \lambda$. 

Note that a matrix $P \in \cP_x$ can easily be generated by choosing a matrix $B \in \R^{n \times (n-1)}$ whose columns form a basis for $x^\perp$, and defining $P = B V B^T$, where $V \in {\cal PSD}^{n-1}$. Alternatively, the following discussion illustrates that there is an even simpler procedure to generate such a matrix $P \in \cP_x$, without having to compute a basis for $x^\perp$. To that end, we present a technical result first.

\begin{lemma} \label{rank1update}
For any two vectors $u \in \R^n$ and $v \in \R^n$ such that $u^T v = 1$, we have 
\begin{equation} \label{r1_rel1}
\mathbf{R}(I - uv^T) = v^\perp, 
\end{equation}
where $\mathbf{R}(\cdot)$ denotes the range space.
\end{lemma}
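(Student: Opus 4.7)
The plan is to establish the equality of the two sets by proving each inclusion separately, with both arguments reducing to one-line computations that directly exploit the hypothesis $u^T v = 1$.

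For the forward inclusion $\mathbf{R}(I - uv^T) \subseteq v^\perp$, I would take an arbitrary $w \in \mathbf{R}(I - uv^T)$, write $w = (I - uv^T)y$ for some $y \in \R^n$, and compute
\[
v^T w = v^T(I - uv^T) y = v^T y - (v^T u)(v^T y) = v^T y - v^T y = 0,
\]
where the third equality uses $v^T u = u^T v = 1$. This shows $w \in v^\perp$.

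For the reverse inclusion $v^\perp \subseteq \mathbf{R}(I - uv^T)$, I would take any $w \in v^\perp$ and observe that $(I - uv^T) w = w - u(v^T w) = w$, since $v^T w = 0$. Thus $w$ itself lies in the range of $I - uv^T$, so $w \in \mathbf{R}(I - uv^T)$.

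There is no real obstacle here: both inclusions are immediate from the defining hypothesis $u^T v = 1$, and together they yield the claimed equality $\mathbf{R}(I - uv^T) = v^\perp$. The only thing worth noting is that the second half of the argument actually proves the stronger statement that $I - uv^T$ acts as the identity on $v^\perp$, a fact that may be useful in the subsequent development (for instance, for constructing matrices in $\cP_x$ by pre- and post-multiplying an arbitrary positive semidefinite matrix by $I - uv^T$ with a suitable choice of $u$ and $v$).
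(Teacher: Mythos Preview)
Your proof is correct and follows essentially the same approach as the paper: both establish the two inclusions by the same direct computations, using $u^T v = 1$ to show $v^T(I - uv^T) = 0$ for the forward direction and $(I - uv^T)w = w$ on $v^\perp$ for the reverse. Your closing remark that $I - uv^T$ restricts to the identity on $v^\perp$ is a useful observation but goes slightly beyond what the paper states explicitly.
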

\begin{proof}
Let $w \in \mathbf{R}(I - uv^T)$. Then, there exists $z \in \R^n$ such that $w = (I - u v^T) z = z - (v^T z) u$. Therefore, $v^T w = v^T z - (v^T z) (v^T u) =  v^T z - v^T z = 0$, which implies that $w \in v^\perp$. 

Conversely, if $w \in v^\perp$, then $(I - u v^T) w = w - (v^T w) u = w$, which implies that $w \in \mathbf{R}(I - uv^T)$, establishing \eqref{r1_rel1}. 
\end{proof}

Using Lemma~\ref{rank1update}, we can present a simpler characterization of $\cP_x$.

\begin{lemma} \label{alt_char_Px}
The following identity holds:
\begin{equation} \label{simple_char_Px}
\cP_x = \left\{P \in \cS^n: P = \left(I - e x^T \right) K \left(I - x e^T \right) \textrm{ for some } K \in {\cal PSD}^{n}\right\},
\end{equation}
where $\cP_x$ is given by \eqref{def_Px}.
\end{lemma}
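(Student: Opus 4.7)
The plan is to prove the set equality by establishing both inclusions, with Lemma~\ref{rank1update} applied to $u=e$ and $v=x$ (so that $u^T v = e^T x = 1$, using $x \in \Delta_n$) doing the main conceptual work. That lemma will give $\mathbf{R}(I - ex^T) = x^\perp$, and the key observation is that $I - ex^T$ in fact acts as the identity on $x^\perp$: if $y \in x^\perp$, then $(I - ex^T)y = y - e(x^T y) = y$.

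For the inclusion $\supseteq$, I would take an arbitrary $K \in \mathcal{PSD}^n$, set $P = (I - ex^T) K (I - xe^T)$, and verify three things. First, $P \in \cS^n$ since $(I - ex^T)^T = I - xe^T$ and $K$ is symmetric. Second, $P \in \mathcal{PSD}^n$ since for any $y \in \R^n$,
\[
y^T P y = \bigl((I - xe^T) y\bigr)^T K \bigl((I - xe^T) y\bigr) \geq 0.
\]
Third, using $e^T x = 1$,
\[
(I - xe^T) x = x - x(e^T x) = 0,
\]
so $Px = 0$, which together with PSD-ness gives $P \in \cP_x$ (using the equivalent form in \eqref{def_Px}).

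For the inclusion $\subseteq$, I would take $P \in \cP_x$ and show that the choice $K = P \in \mathcal{PSD}^n$ reproduces $P$ through the map. Since $Px = 0$ and $P = P^T$, we also have $x^T P = 0$, so every column of $P$ lies in $x^\perp$. By the observation above, $I - ex^T$ acts as the identity on $x^\perp$, hence $(I - ex^T) P = P$. Taking transposes yields $P(I - xe^T) = P$. Combining,
\[
(I - ex^T) P (I - xe^T) = P \cdot (I - xe^T) = P,
\]
so $P$ lies in the right-hand side of \eqref{simple_char_Px}.

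I do not foresee a significant obstacle: the only mildly nonobvious point is recognizing that one can simply take $K = P$ in the reverse direction rather than trying to construct $K$ from a basis of $x^\perp$. Lemma~\ref{rank1update} could alternatively be invoked to say that the range of $P$ is contained in $\mathbf{R}(I - ex^T)$, but the column-wise computation above keeps the argument self-contained and short.
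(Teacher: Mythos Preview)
Your proof is correct. The $\supseteq$ direction is essentially identical to the paper's argument. For the $\subseteq$ direction, however, you take a cleaner route: you simply set $K = P$ and use the fact that $I - ex^T$ acts as the identity on $x^\perp$ to verify directly that $(I - ex^T)P(I - xe^T) = P$. The paper instead factors $P = LL^T$, observes that each column of $L$ lies in $x^\perp$, invokes Lemma~\ref{rank1update} to write $L = (I - ex^T)W$ for some $W$, and then takes $K = WW^T$. Your choice avoids the factorization step and the explicit appeal to the range identity in Lemma~\ref{rank1update}, at the cost of nothing---the resulting $K$ is just as valid. The paper's route has the minor advantage of making the connection to the earlier remark about choosing a basis of $x^\perp$ more transparent, but your argument is shorter and entirely self-contained.
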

\begin{proof}
Suppose that $P \in \cP_x$. Then, $P \in {\cal PSD}^n$ and $x^T P x = 0$. Since $P \in {\cal PSD}^n$, there exists a matrix $L \in \R^{n \times n}$ such that $P = L L^T$. It follows that $L^T x = 0$, which implies that each column of $L$ belongs to $x^\perp$. Since $e^T x = 1$, it follows from Lemma~\ref{rank1update} that there exists a matrix $W \in \R^n$ such that $L = \left(I - e x^T \right) W$. Therefore, $P = L L^T = \left(I - e x^T \right) W W^T \left(I - x e^T \right) = \left(I - e x^T \right) K \left(I - x e^T \right)$, where $K = W W^T \in {\cal PSD}^{n}$.

Conversely, if $P = \left(I - e x^T \right) K \left(I - x e^T \right)$ for some $K \in {\cal PSD}^{n}$, then we clearly have $P \in {\cal PSD}^n$ and $x^T P x = 0$, which implies that $P \in \cP_x$.
\end{proof}

By Lemma~\ref{alt_char_Px}, in order to ensure that $P \in \cP_x$, it is necessary and sufficient to generate a matrix $K \in {\cal PSD}^{n}$ and define $P = \left(I - ex^T \right) K \left(I - xe^T \right)$.

The following corollary is an immediate consequence of Proposition~\ref{charac_2}, \eqref{def_Qx}, and \eqref{union_Sx}.

\begin{corollary} \label{Q_char_2}
The following relation is satisfied:
\begin{equation} \label{Q_c2}
\cQ^n = \bigcup\limits_{x \in \Delta_n} \cQ_x = \bigcup\limits_{x \in \Delta_n} \left( \cP_x + \cN_x + \cL \right),
\end{equation}
where $\cQ_x$, $\cP_x$, $\cN_x$, and $\cL$ are given by \eqref{def_Qx}, \eqref{def_Px}, \eqref{def_Nx}, and \eqref{def_L}, respectively. 
\end{corollary}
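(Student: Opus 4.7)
The plan is to prove the two equalities separately, with both being essentially immediate consequences of the machinery already developed, so the main task is to assemble the pieces rather than produce any new argument.

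For the second equality $\bigcup_{x \in \Delta_n} \cQ_x = \bigcup_{x \in \Delta_n} (\cP_x + \cN_x + \cL)$, I would simply invoke Proposition~\ref{charac_2} termwise: for every fixed $x \in \Delta_n$ the identity $\cQ_x = \cP_x + \cN_x + \cL$ holds, so taking the union over $x \in \Delta_n$ on both sides gives the claim. No extra argument is needed.

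For the first equality $\cQ^n = \bigcup_{x \in \Delta_n} \cQ_x$, I would argue by mutual inclusion. The inclusion $\bigcup_{x \in \Delta_n} \cQ_x \subseteq \cQ^n$ is trivial from the definition \eqref{def_Qx}, since $\cQ_x = \cS_x \cap \cQ^n \subseteq \cQ^n$ for every $x$. For the reverse inclusion, take any $Q \in \cQ^n$; by \eqref{union_Sx} we have $Q \in \cS^n = \bigcup_{x \in \Delta_n}\cS_x$, so there exists some $x^* \in \Delta_n$ with $Q \in \cS_{x^*}$, i.e.\ $x^* \in \Omega(Q)$. (Equivalently, since $\Delta_n$ is compact and $x^T Q x$ is continuous, an optimizer $x^*$ of (StQP) exists.) Then $Q \in \cS_{x^*} \cap \cQ^n = \cQ_{x^*}$, placing $Q$ in the union.

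There is no genuine obstacle here: Proposition~\ref{charac_2}, the definition \eqref{def_Qx}, and the covering relation \eqref{union_Sx} supply everything needed. The only point worth being careful about is recording why \eqref{union_Sx} holds — namely that every $Q \in \cS^n$ admits at least one optimal solution over the compact set $\Delta_n$ — but this was already established when \eqref{union_Sx} was stated, so it can be cited directly.
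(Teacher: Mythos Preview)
Your proposal is correct and takes essentially the same approach as the paper: the paper presents this corollary without proof, stating only that it is an immediate consequence of Proposition~\ref{charac_2}, \eqref{def_Qx}, and \eqref{union_Sx}, which is precisely the combination of ingredients you assemble.
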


By Lemma~\ref{charac_1}, for any $x \in \Delta_n$ and $Q \in \cS^n$, one can check if $Q \in \cQ_x$ in polynomial time. In contrast, checking if $Q \in \cS_x$ is, in general, NP-hard. Furthermore, a complete characterization of the matrices in $\cS_x \backslash \cQ_x$ requires a full understanding of the set ${\cal COP}^n \backslash {\cal SPN}^n$. While the set of extreme rays of ${\cal COP}^n \backslash {\cal SPN}^n$ has recently been completely characterized for $n = 5$ and $n = 6$ (see \cite{hildebrand2012extreme,afonin:hal-02463284}), the problem still remains open in higher dimensions. 

In the remainder of this section, we establish that the set $\cS_x$ admits simple characterizations under the assumption that $x \in \Delta_n$ satisfies certain conditions. 

To that end, we first recall that the boundary of ${\cal COP}^n$ is given by
\begin{equation} \label{cop_bd}
\partial ~{\cal COP}^n = \left\{M \in {\cal COP}^n: \exists~u \in \Delta_n \textrm{ s.t. } u^T M u = 0 \right\}.
\end{equation} 

For a copositive matrix $M \in \partial ~{\cal COP}^n$, the zeros of $M$ is given by
\begin{equation} \label{def_M_zeros}
\mathbf{V}^M = \left\{u \in \Delta_n: u^T M u = 0\right\}.
\end{equation}

We start with the following simple lemma. We remark that these results can be found in, e.g., ~\cite{ref:Diananda,baumert1966extreme,dickinson2013irreducible}. For the sake of completeness, we provide alternate proofs by relying on the optimality conditions of (StQP).

\begin{lemma} \label{tech_res_cop}
Let $Q \in \cS^n$ and let $x^* \in \Omega(Q)$. Let $M = Q - \left( (x^*)^T Q x^* \right) E \in \cS^n$, $A = A(x^*)$ and $Z = Z(x^*)$, where $A(\cdot)$ and $Z(\cdot)$ are defined as in \eqref{def_P} and \eqref{def_Z}, respectively. Then, 
\begin{enumerate}
    \item[(i)] $M \in \partial ~{\cal COP}^n$;
    \item[(ii)] $M_{AA} \, x^*_A = 0$;
    \item[(iii)] $M_{ZA} \, x^*_A \geq 0$;
    \item[(iv)] $M_{AA} \in {\cal PSD}^{|A|}$.
\end{enumerate}
\end{lemma}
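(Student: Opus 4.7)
The plan is to prove the four claims roughly in order, each time appealing to a different one of the optimality conditions for (StQP) at the optimal solution $x^*$.

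For part (i), I would invoke Theorem~\ref{nec_suff_stqp} directly: since $x^* \in \Omega(Q)$, the matrix $M = Q - ((x^*)^T Q x^*) E$ lies in ${\cal COP}^n$. To see that $M$ is on the boundary, I would compute $(x^*)^T M x^* = (x^*)^T Q x^* - ((x^*)^T Q x^*)(e^T x^*)^2 = 0$ using $e^T x^* = 1$. Thus $x^* \in \mathbf{V}^M$, so by \eqref{cop_bd} we get $M \in \partial~{\cal COP}^n$.

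For parts (ii) and (iii), the key tool will be the KKT conditions \eqref{eq1}--\eqref{eq5}. Since $x^* \in \Omega(Q)$ is in particular a local minimizer, there exists $s \in \R^n_+$ with $Q x^* - ((x^*)^T Q x^*) e = s$ and $x^*_j s_j = 0$ for each $j$. Rewriting the left-hand side gives $M x^* = Q x^* - ((x^*)^T Q x^*)(E x^*) = s \geq 0$, since $E x^* = e$. Now, because $x^*_k = 0$ for $k \in Z$, we have $(M x^*)_j = M_{jA}\, x^*_A$ for every $j$. Complementary slackness together with $x^*_A > 0$ forces $s_A = 0$, giving $M_{AA}\, x^*_A = 0$, which is (ii); the remaining coordinates yield $M_{ZA}\, x^*_A = s_Z \geq 0$, which is (iii).

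For part (iv), the strategy is to use the second-order condition \eqref{eq6} with a carefully chosen family of feasible directions. Given an arbitrary $y \in \R^{|A|}$, define $d \in \R^n$ by $d_A = y - (e^T y)\, x^*_A$ and $d_Z = 0$. Then $e^T d = e^T y - (e^T y)(e^T x^*_A) = 0$ since $e^T x^*_A = 1$, and the nonnegativity requirement on $Z(x^*)$ is trivially met. To check the orthogonality $d^T Q x^* = 0$, I would substitute $Q x^* = ((x^*)^T Q x^*) e + s$ from the KKT stationarity to get $d^T Q x^* = ((x^*)^T Q x^*)(e^T d) + d^T s = 0$, because $s_A = 0$ and $d_Z = 0$. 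Hence $d \in \mathbf{D}(x^*)$. Finally I would expand $d^T Q d = d^T M d + ((x^*)^T Q x^*)(e^T d)^2 = d_A^T M_{AA}\, d_A$, and then use (ii) to kill the cross terms in $(y - (e^T y) x^*_A)^T M_{AA} (y - (e^T y) x^*_A)$, leaving exactly $y^T M_{AA}\, y \geq 0$.

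The only nonroutine step is the construction of $d$ in part (iv): one needs a direction whose restriction to $A$ is an arbitrary $y$ after a shift that both enforces $e^T d = 0$ and, thanks to (ii), collapses the quadratic form on $A$ down to $y^T M_{AA} y$. Once that ansatz is in hand, everything else is bookkeeping with the KKT multipliers and the shift-invariance relation between $Q$ and $M$.
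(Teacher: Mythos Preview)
Your proof is correct. Parts (i)--(iii) match the paper's argument essentially verbatim (the paper phrases the KKT step as applied to the shifted problem with matrix $M$ and optimal value $0$, while you apply it to $Q$ and then subtract; these are the same computation).

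For part (iv) the paper takes a genuinely different route. Instead of invoking the second-order condition \eqref{eq6} and building a feasible direction in $\mathbf{D}(x^*)$, the paper argues directly from copositivity: since $M \in {\cal COP}^n$, Lemma~\ref{lem_gen_rels}(iv) gives $M_{AA} \in {\cal COP}^{|A|}$, and then for any $d \in \R^{|A|}$ one expands $(x^*_A + \alpha d)^T M_{AA}(x^*_A + \alpha d) = \alpha^2\, d^T M_{AA} d$ using (ii) and $(x^*_A)^T M_{AA} x^*_A = 0$. If $d^T M_{AA} d < 0$, then for small $\alpha > 0$ the vector $x^*_A + \alpha d$ is still nonnegative (since $x^*_A > 0$) but gives a negative value, contradicting copositivity of $M_{AA}$. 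The paper's argument is slightly more self-contained in that it never touches the second-order condition \eqref{eq6}; your approach, on the other hand, shows how (iv) is precisely the content of that second-order condition restricted to the support, and your shift $y \mapsto y - (e^T y) x^*_A$ is a clean way to parametrize the relevant directions.
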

\begin{proof}
Let $Q \in \cS^n$, $x^* \in \Omega(Q)$, and $M = Q - \left( (x^*)^T Q x^* \right)E$. By Theorem~\ref{nec_suff_stqp}, $M \in {\cal COP}^n$. Furthermore, $(x^*)^T M x^* = (x^*)^T Q x^* - (x^*)^T Q x^* = 0$, which implies that $M \in \partial ~{\cal COP}^n$ by \eqref{cop_bd}, which establishes (i). 

Consider the (StQP) instance corresponding to $M$. Since $M \in {\cal COP}^n$, we obtain $x^T M x \geq (x^*)^T M x^* = 0 = \nu(M)$ for each $x \in \Delta_n$. By combining $\nu(M) = (x^*)^T M x^* = 0$ with the KKT conditions \eqref{eq1}, \eqref{eq4}, and \eqref{eq5}, we obtain $M_{AA} \, x^*_A = 0$ and $M_{ZA} \, x^*_A \geq 0$, establishing (ii) and (iii). 

Finally, for any $d \in \R^{|A|}$ and any $\alpha \in \R$, we have
\[
(x^*_A + \alpha d)^T M_{AA} (x^*_A + \alpha d) = (x^*_A)^T M_{AA} x^*_A + 2 \alpha d^T M_{AA} x^*_P + \alpha^2 d^T M_{AA} d = \alpha^2 d^T M_{AA} d,
\]
where we used $(x^*)^T M x^* = (x^*_A)^T M_{AA} x^*_A = 0$ and (ii) in the second equality. If there exists $d \in \R^{|A|}$ such that $d^T M_{AA} d < 0$, then, since $x^*_A > 0$, for sufficiently small $\alpha > 0$, we obtain $x^*_A + \alpha d > 0$ and $(x^*_A + \alpha d)^T M_{AA} (x^*_A + \alpha d) < 0$, which implies that $M_{AA} \not \in {\cal COP}^n$, contradicting Lemma~\ref{lem_gen_rels} (iv). Therefore, $M_{AA} \in {\cal PSD}^{|A|}$, establishing (iv).
\end{proof}

We are now in a position to identify some points $x \in \Delta_n$ for which the set $\cS_x$ given by \eqref{def_Sx} has a simple description.

\begin{lemma} \label{large_support}
For any $x \in \Delta_n$ such that $|A(x)| \geq n - 1$, where $A(x)$ is given by \eqref{def_Px}, we have
\begin{equation} \label{Sx_eq_Qx}
\cS_x = \cQ_x,    
\end{equation}
where $\cS_x$ and $\cQ_x$ are given by \eqref{def_Sx} and \eqref{def_Qx}, respectively. 
\end{lemma}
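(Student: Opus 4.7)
The plan is to establish $\cS_x \subseteq \cQ_x$, since the reverse inclusion is already recorded in \eqref{inc_rel_2}. Fix $Q \in \cS_x$ and set $M := Q - (x^T Q x) E$. By Lemma~\ref{charac_1} it suffices to show $M \in {\cal SPN}^n$. Applying Lemma~\ref{tech_res_cop} to $x \in \Omega(Q)$ gives $M \in {\cal COP}^n$, $M_{AA} \in {\cal PSD}^{|A|}$, $M_{AA}\, x_A = 0$, and $M_{ZA}\, x_A \geq 0$, where $A := A(x)$. If $|A(x)| = n$, then $M = M_{AA} \in {\cal PSD}^n \subseteq {\cal SPN}^n$ and we are done immediately, so I focus on the case $|A(x)| = n - 1$. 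By Lemma~\ref{lem_gen_rels}(ii) I may assume $Z(x) = \{n\}$ and write
\[
M = \begin{bmatrix} M_{AA} & u \\ u^T & \alpha \end{bmatrix},
\]
with $u = M_{An} \in \R^{n-1}$ and $\alpha = M_{nn} \geq 0$. Substituting $(w, 1)^T \in \R^n_+$ (for arbitrary $w \in \R^{n-1}_+$) into the copositivity inequality $w^T M w \geq 0$ gives the key estimate $w^T M_{AA} w + 2 u^T w + \alpha \geq 0$ for all $w \in \R^{n-1}_+$.

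I aim for a decomposition $M = P + N$ with $P \in {\cal PSD}^n$, $N \in \cN^n$ of the form $P = \begin{bmatrix} M_{AA} & v \\ v^T & p \end{bmatrix}$. By the Schur complement, this $P$ is PSD iff $v \in \mathbf{R}(M_{AA})$ and $p \geq v^T M_{AA}^+ v$, while $N = M - P$ is entrywise nonnegative iff $v \leq u$ componentwise and $p \leq \alpha$. Parameterizing $v = M_{AA} z$ (so that automatically $v \in \mathbf{R}(M_{AA})$ and $v^T M_{AA}^+ v = z^T M_{AA} z$) and taking $p = z^T M_{AA} z$, the existence of a valid $P$ reduces to finding $z \in \R^{n-1}$ with $M_{AA} z \leq u$ componentwise and $z^T M_{AA} z \leq \alpha$.

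Producing such a $z$ is the step I expect to be the main obstacle. I plan to analyze the convex quadratic program
\[
\min\left\{z^T M_{AA} z : z \in \R^{n-1},\; M_{AA} z \leq u\right\}.
\]
Feasibility follows from copositivity of $M$: if the problem were infeasible, Farkas' lemma would supply $\eta \in \R^{n-1}_+$ with $M_{AA}\eta = 0$ and $u^T\eta < 0$, and then $(\eta, t)^T \in \R^n_+$ would give $w^T M w = 2t\, u^T \eta + \alpha t^2 < 0$ for sufficiently small $t > 0$, contradicting $M \in {\cal COP}^n$. The objective is bounded below by $0$ since $M_{AA} \succeq 0$, and by the Frank--Wolfe theorem the minimum is attained on the polyhedron. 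Strong duality for convex QP with linear inequality constraints then produces, via a standard Lagrangian computation, the dual representation $\sup_{w \geq 0}\bigl(-w^T M_{AA} w - 2 u^T w\bigr)$ for the primal optimal value, and the displayed copositivity inequality bounds this supremum above by $\alpha$. Hence there exists $z^*$ with $M_{AA} z^* \leq u$ and $(z^*)^T M_{AA} z^* \leq \alpha$; taking $v := M_{AA} z^*$ and $p := (z^*)^T M_{AA} z^*$ produces the desired $P$ and $N$, so $M \in {\cal SPN}^n$ and $Q \in \cQ_x$ by Lemma~\ref{charac_1}.
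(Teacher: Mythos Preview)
Your proof is correct. For $|A(x)|=n$ it coincides with the paper's argument. For $|A(x)|=n-1$, however, the paper simply invokes \cite[Lemma~3.1]{shaked2016spn}, which states that a copositive matrix with a positive semidefinite $(n-1)\times(n-1)$ principal submatrix lies in ${\cal SPN}^n$; no further work is done. You instead supply a self-contained proof of that fact: you reduce the existence of an SPN splitting $M=P+N$ to finding $z$ with $M_{AA}z\le u$ and $z^TM_{AA}z\le\alpha$, and then produce such a $z$ via convex QP duality, using Farkas' lemma for feasibility, Frank--Wolfe for attainment, and the copositivity inequality on vectors $(w,1)$ to bound the dual. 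The payoff is that your argument is entirely internal to the paper and makes transparent exactly how copositivity of $M$ enters; the cost is a substantially longer proof than the paper's one-line citation. One cosmetic point: you use $w$ for both the full vector in $\R^n$ and its first block in $\R^{n-1}$ when deriving the key estimate, which is worth disambiguating.
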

\begin{proof}
Let $x \in \Delta_n$ be such that $|A(x)| \geq n - 1$. 
Note that we already have $\cQ_x \subseteq \cS_x$ by \eqref{def_Qx}. Therefore, it suffices to establish the reverse inclusion. 

Let $Q \in \cS_x$ and let $M = Q - \left( x^T Q x \right) E \in \cS^n$. By Lemma~\ref{tech_res_cop} (i), we have $M \in \partial ~{\cal COP}^n$. Let $A = A(x)$. If $|A| = n$, then $M_{AA} = M \in {\cal PSD}^{n}$ by Lemma~\ref{tech_res_cop} (iv), which implies that $M \in {\cal SPN}^n$ by \eqref{inc_rels} and $Q \in \cQ_x$ by Lemma~\ref{charac_1}. If, on the other hand, $|A| = n - 1$, then $M_{AA} \in {\cal PSD}^{n-1}$ by Lemma~\ref{tech_res_cop} (iv). By \cite[Lemma 3.1]{shaked2016spn}, it follows that $M \in {\cal SPN}^n$ and we similarly obtain $Q \in \cQ_x$.
\end{proof}

Our final result specifically focuses on the case $n = 5$. 

\begin{lemma} \label{n_5_singleton}
Let $n = 5$. Then, for $x \in \left\{e_1,e_2,\ldots,e_5\right\}$, we have
\begin{equation} \label{Sx_eq_Qx_n_5}
\cS_x = \cQ_x,    
\end{equation}
where $\cS_x$ and $\cQ_x$ are given by \eqref{def_Sx} and \eqref{def_Qx}, respectively.
\end{lemma}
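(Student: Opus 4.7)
The plan is to leverage the fact that $x = e_i$ is a vertex of $\Delta_5$ with support $A(x) = \{i\}$, which reduces the question to the $(n-1) = 4$ dimensional principal submatrix, where the Diananda identity ${\cal COP}^4 = {\cal SPN}^4$ from \eqref{diananda} applies. Since $\cQ_x \subseteq \cS_x$ already holds by the definitions, I only need the reverse inclusion, so fix $Q \in \cS_x$ and, by Theorem~\ref{nec_suff_stqp} together with \eqref{def_Sx}, set $M = Q - (x^TQx)E = Q - Q_{ii}E \in {\cal COP}^5$. By Lemma~\ref{charac_1}, the goal collapses to showing $M \in {\cal SPN}^5$.

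Next, I would invoke Lemma~\ref{tech_res_cop} with $A = A(x) = \{i\}$ and $Z = Z(x) = \{1,\ldots,5\}\setminus\{i\}$: part (ii) specialises to $M_{ii} = 0$, while part (iii) specialises to $M_{ji} \ge 0$ for every $j \ne i$. By Lemma~\ref{lem_gen_rels}(iv) the $4 \times 4$ principal submatrix $M' := M_{ZZ}$ lies in ${\cal COP}^4$, and \eqref{diananda} then produces an SPN decomposition $M' = P' + N'$ with $P' \in {\cal PSD}^4$ and $N' \in \cN^4$.

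The final step is to lift $(P',N')$ to an SPN decomposition of $M$. I embed $P'$ into a $5 \times 5$ matrix $P$ by inserting a zero row and column in position $i$, which is (up to a symmetric permutation) $P' \oplus 0$; Lemma~\ref{lem_gen_rels}(ii) and (v) then give $P \in {\cal PSD}^5$. I embed $N'$ into $N \in \cN^5$ by inserting $M_{Zi}$ as the off-diagonal entries of row/column $i$ and $0$ in position $(i,i)$; nonnegativity of $N$ follows from $M_{ji} \ge 0$. An entrywise check, using $P_{ji} = 0$ for all $j$ together with $M_{ii} = 0$, confirms $P + N = M$, whence $M \in {\cal SPN}^5$ and $Q \in \cQ_x$ by Lemma~\ref{charac_1}.

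No step in this outline looks technically hard; the only conceptual point worth flagging is why one is free to insist that the PSD summand have a zero diagonal entry at position $i$. The answer, which is exactly what makes $n = 5$ work, is that the PSD-diagonal-zero forces an entire zero row and column in $P$ at no cost, and this is perfectly compatible with $M_{ii} = 0$ because it leaves the nonnegative mass $M_{Zi} \ge 0$ supplied by Lemma~\ref{tech_res_cop}(iii) entirely available for the componentwise nonnegative summand. If one tried to push the same idea to $n \ge 6$, the remaining $(n-1) \times (n-1)$ principal submatrix need no longer be SPN, so the argument genuinely exploits the threshold in \eqref{diananda}.
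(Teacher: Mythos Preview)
Your proposal is correct and follows essentially the same route as the paper: reduce to the $4 \times 4$ principal submatrix indexed by $Z$, invoke \eqref{diananda} to get an SPN decomposition there, and lift it back to dimension $5$ using the nonnegativity of the off-diagonal column $M_{Zi}$ furnished by Lemma~\ref{tech_res_cop}(iii). The only cosmetic difference is that the paper outsources the lifting step to \cite[Lemma 3.3]{shaked2016spn}, whereas you construct the $5 \times 5$ pair $(P,N)$ by hand; your explicit construction is exactly the content of that cited lemma, so the arguments coincide.
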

\begin{proof}
Let $Q \in \cS_x$, where $x \in \left\{e_1,e_2,\ldots,e_5\right\}$, and let $M = Q - \left( x^T Q x \right) E \in \cS^5$. Then, $|A(x)| = |A| = 1$. By Lemma~\ref{tech_res_cop} (ii) and (iii), there exists a permutation matrix $J \in \R^{n \times n}$ such that
\[
J^T M J = \widehat{M} = \begin{bmatrix} 0 & b^T \\ b & B \end{bmatrix},
\]
where $b \in \R^4_+$ and $B \in \cS^4$. Since $M \in {\cal COP}^5$, we have $B \in {\cal COP}^4$ by Lemma~\ref{lem_gen_rels} (ii) and (iv). By \eqref{diananda}, $B \in {\cal SPN}^4$. Since $b \geq 0$, it follows from \cite[Lemma 3.3]{shaked2016spn} that $\widehat{M} \in {\cal SPN}^5$, which implies that $M \in {\cal SPN}^5$ by Lemma~\ref{lem_gen_rels} (ii) and that $Q \in \cQ_x$ by Lemma~\ref{charac_1}.
\end{proof}

For any $x \in \Delta_n$ that satisfies the conditions of Lemma~\ref{large_support} or Lemma~\ref{n_5_singleton}, it follows that the doubly nonnegative relaxation is exact for all instances of (StQP) for which $x$ is an optimal solution. We also remark that the proof of Lemma~\ref{n_5_singleton} cannot be extended to the case $n \geq 6$. In fact, for any $n \geq 6$, we will illustrate in Section~\ref{pos_gap} how to construct an instance of (StQP) with $\left\{e_1,e_2,\ldots,e_n\right\} \subseteq \Omega(Q)$ such that the doubly nonnegative relaxation has a positive gap.

We close this section by recalling that, for each $x \in \Delta_n$, the membership problem in $\cQ_x$ is polynomial-time solvable. On the other hand, for a given $Q \in \cS^n$, checking if $Q \in \cQ^n$ is equivalent to checking if there exists $x \in \Delta_n$ such that $Q \in \cQ_x$. Since this latter problem may not necessarily be polynomial-time solvable, we instead focus on explicitly identifying several classes of matrices that belong to $\cQ^n$ in the next section.

\section{Three Families of Standard Quadratic Programs with Exact Doubly Nonnegative Relaxations} \label{subsets_of_cQ}

In this section, we identify three families of matrices that admit exact doubly nonnegative relaxations by relying on the characterizations presented in Section~\ref{exact_dnn}.

\subsection{Minimum Entry on the Diagonal} \label{min_diag}

In this section, we show that any matrix $Q \in \cS^n$ whose minimum entry lies on the diagonal belongs to $\cQ^n$. Let us denote the set of such matrices by $\cQ^n_{1}$, i.e.,
\begin{equation} \label{def_cQ1}
\cQ^n_1 = \left\{Q \in \cS^n: \min\limits_{1 \leq i \leq j \leq n} Q_{ij} = \min\limits_{k = 1,\ldots,n} Q_{kk}\right\}.    
\end{equation}

Note that $\cQ^n_1$ is given by the union of a finite number of polyhedral cones, i.e.,
\[
\cQ^n_1 = \bigcup_{k=1}^n \left\{Q \in \cS^n: Q_{ij} \geq Q_{kk}, \quad 1 \leq i \leq j \leq n\right\}.
\]

\begin{proposition} \label{Q1subsetQ}
The following relation holds:
\begin{equation} \label{Q1_subset_Q}
\cQ^n_1 \subseteq \cQ^n,    
\end{equation}
where $\cQ^n_1$ and $\cQ^n$ are given by \eqref{def_cQ1} and \eqref{def_Q}, respectively.
\end{proposition}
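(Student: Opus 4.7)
The plan is to exhibit a witness of doubly nonnegative exactness using the unit vector $e_k$ corresponding to the diagonal minimum, and then apply the characterization of $\cQ_{e_k}$ from either Lemma~\ref{charac_1} or Proposition~\ref{charac_2}.

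Concretely, let $Q \in \cQ^n_1$ and pick an index $k \in \{1,\ldots,n\}$ with $Q_{kk} = \min_{1 \le i \le j \le n} Q_{ij}$. Set $x = e_k \in \Delta_n$, so $x^T Q x = Q_{kk}$. The first step is the elementary observation that the matrix
\[
N \;=\; Q - Q_{kk}\, E
\]
is componentwise nonnegative, since the definition of $\cQ^n_1$ forces every entry of $Q$ to be at least $Q_{kk}$. In particular $N \in \cN^n$, and $N_{kk}=0$, which means $N \in \cN_{e_k}$ in the notation of \eqref{def_Nx}.

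From here there are two equivalent finishes. The cleanest is to invoke Lemma~\ref{charac_1}: since $N \in \cN^n \subseteq \cSPN^n$ by \eqref{inc_rels}, we have $Q - (x^T Q x) E \in \cSPN^n$, which directly places $Q$ in $\cQ_{e_k}$. Alternatively, one can use Proposition~\ref{charac_2} and write the explicit decomposition $Q = 0 + N + Q_{kk} E$ with $0 \in \cP_{e_k}$, $N \in \cN_{e_k}$, and $Q_{kk} E \in \cL$, so that $Q \in \cP_{e_k} + \cN_{e_k} + \cL = \cQ_{e_k}$. Either way, Corollary~\ref{Q_char_2} yields $Q \in \cQ^n$, establishing \eqref{Q1_subset_Q}.

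There is essentially no genuine obstacle: the argument is a one-line verification once the right optimal solution is identified. The only conceptual point worth noting is that the diagonal-minimum hypothesis simultaneously certifies that $e_k$ is optimal for (StQP) (with objective value $Q_{kk}$) and that the residual matrix after subtracting the value of the objective times $E$ lands in the easiest possible piece of $\cSPN^n$, namely the nonnegative orthant $\cN^n$. As a byproduct, the argument shows $\nu(Q) = \ell(Q) = Q_{kk}$ for every $Q \in \cQ^n_1$, which will be useful when contrasting $\cQ^n_1$ with the richer families introduced later.
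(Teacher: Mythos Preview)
Your proof is correct and follows essentially the same approach as the paper: pick the unit vector $e_k$ at the diagonal minimum, observe that $N = Q - Q_{kk}E \in \cN_{e_k}$, and invoke Proposition~\ref{charac_2} with the decomposition $Q = 0 + N + Q_{kk}E$. The paper uses exactly this route (via Proposition~\ref{charac_2}); your alternative finish through Lemma~\ref{charac_1} is equally valid.
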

\begin{proof}
Let $Q \in \cQ^n_1$. Let us define $\lambda = \min\limits_{1 \leq i \leq j \leq n} Q_{ij} = \min\limits_{k = 1,\ldots,n} Q_{kk} = Q_{\ell \ell}$ and $N = Q - \lambda E \in \cN^n$. Therefore, $Q = 0 + N + \lambda E$. Then, it easy to verify that $N \in \cN_x$, where $x = e_{\ell} \in \R^n$ and $\cN_x$ is given by \eqref{def_Nx}. By Proposition~\ref{charac_2}, $Q \in \cQ_x$, where $\cQ_x$ is given by \eqref{def_Qx}. The inclusion \eqref{Q1_subset_Q} follows. 
\end{proof}

\subsubsection{Standard Quadratic Programs with a Concave Objective Function}

In this section, we explicitly identify a subset of matrices contained in $\cQ^n_1$, where $\cQ^n_1$ is given by \eqref{def_cQ1}.

By the proof of Proposition~\ref{Q1subsetQ}, 
\begin{equation} \label{Q1_implication}
\Omega(Q) \cap \{e_1,\ldots,e_n\} \neq \emptyset, \quad \forall Q \in \cQ^n_1.
\end{equation}

Based on this observation, it is worth focusing on the set of instances of (StQP) with a concave objective function since the set of optimal solutions necessarily contains one of the vertices of the unit simplex. Such instances are precisely given by those instances in which $Q$ is negative semidefinite on $e^\perp$, i.e.,
\begin{equation} \label{def_Q_concave}
\cQ^n_{\textrm{concave}} = \left\{ Q \in \cS^n: d^T Q d \leq 0, \quad \forall d \in \R^n ~\textrm{such that}~ e^T d = 0\right\}.
\end{equation}

The following inclusion can easily be verified.
\begin{equation} \label{nsd_inclusion}
- {\cal PSD}^n + \cL \subseteq \cQ^n_{\textrm{concave}},
\end{equation}
where $\cL$ is given by \eqref{def_L}.

First, we present a useful property of $\cQ^n_{\textrm{concave}}$.

\begin{lemma} \label{alt_char_cQconcave}
For any $Q \in \cQ^n_{\textrm{concave}}$,  
\begin{equation} \label{alt_char_nsd}
- \left(I - e x^T \right) Q \left(I - x e^T\right) \in {\cal PSD}^n, \quad \textrm{for each}~x \in \Delta_n.
\end{equation}
\end{lemma}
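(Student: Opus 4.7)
The plan is to reduce the positive semidefiniteness claim to the hypothesis directly via a single change of variables. Concretely, for an arbitrary $w \in \R^n$, I would define
\[
v = \left( I - x e^T \right) w = w - (e^T w)\, x,
\]
and first observe that $v$ lies in $e^\perp$. This uses only $e^T x = 1$: indeed, $e^T v = e^T w - (e^T w)(e^T x) = 0$.

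Next, I would exploit the transpose identity $(I - e x^T)^T = I - x e^T$ to rewrite the quadratic form of the candidate matrix as a quadratic form of $Q$ evaluated at $v$. That is,
\[
w^T\!\left[ -\left(I - e x^T\right) Q \left(I - x e^T\right) \right] w = -\left[ (I - x e^T) w \right]^T Q \left[ (I - x e^T) w \right] = -v^T Q v.
\]
By the definition of $\cQ^n_{\textrm{concave}}$ in \eqref{def_Q_concave} and the fact that $e^T v = 0$, we have $v^T Q v \leq 0$, so $-v^T Q v \geq 0$. Since $w \in \R^n$ was arbitrary, the matrix $-\left(I - e x^T\right) Q \left(I - x e^T\right)$ is positive semidefinite, which is exactly \eqref{alt_char_nsd}.

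There is no real obstacle here; the statement is essentially a congruence transformation argument, and the only non-trivial observation is that multiplying $w$ on the right by $(I - x e^T)$ yields a vector orthogonal to $e$, which is precisely the subspace on which the concavity hypothesis on $Q$ applies. The symmetry between $(I - e x^T)$ and $(I - x e^T)$ via transposition is what makes the reduction to a quadratic form in $v$ work cleanly.
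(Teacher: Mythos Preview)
Your proof is correct and follows essentially the same approach as the paper. The paper's one-line proof invokes Lemma~\ref{rank1update} to conclude that $\mathbf{R}(I - x e^T) = e^\perp$ when $e^T x = 1$, which is exactly the content of your direct computation $e^T v = e^T w - (e^T w)(e^T x) = 0$; from there both arguments finish identically by evaluating the quadratic form and applying the definition of $\cQ^n_{\textrm{concave}}$.
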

\begin{proof}
The assertion follows directly from Lemma~\ref{rank1update} since $e^T x = 1$ for each $x \in \Delta_n$.
\end{proof}

We next show that every matrix $Q \in \cQ^n_{\textrm{concave}}$ necessarily has a minimum entry along the diagonal.

\begin{proposition} \label{Q_concave_subset_Q1}
The following relation holds:
\begin{equation} \label{Q_conc_subset_Q1}
\cQ^n_{\textrm{concave}} \subseteq \cQ^n_1,    
\end{equation}
where $\cQ^n_{\textrm{concave}}$ and $\cQ^n_1$ are given by \eqref{def_Q_concave} and \eqref{def_cQ1}, respectively. Therefore, 
\begin{equation} \label{Q_concave_subset_Q}
\cQ^n_{\textrm{concave}} \subseteq \cQ^n.    
\end{equation}
\end{proposition}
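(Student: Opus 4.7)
The plan is to establish the inclusion \eqref{Q_conc_subset_Q1} by a short contrapositive argument, testing the concavity condition on carefully chosen directions of the form $e_i - e_j$. Once this inclusion is in place, the second inclusion \eqref{Q_concave_subset_Q} is immediate from Proposition~\ref{Q1subsetQ} by chaining $\cQ^n_{\textrm{concave}} \subseteq \cQ^n_1 \subseteq \cQ^n$.

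For the main inclusion, I would argue by contradiction. Suppose $Q \in \cQ^n_{\textrm{concave}}$ but $Q \notin \cQ^n_1$. By the definition \eqref{def_cQ1}, the global minimum entry of $Q$ is strictly smaller than the minimum diagonal entry, so there must exist indices $i,j$ with $i \ne j$ such that
\[
Q_{ij} < \min_{k=1,\ldots,n} Q_{kk} \leq \min\{Q_{ii}, Q_{jj}\}.
\]
Now consider the direction $d = e_i - e_j \in \R^n$. Clearly $e^T d = 0$, so by \eqref{def_Q_concave} we must have $d^T Q d \leq 0$. On the other hand, a direct computation gives
\[
d^T Q d = Q_{ii} - 2 Q_{ij} + Q_{jj} = (Q_{ii} - Q_{ij}) + (Q_{jj} - Q_{ij}) > 0,
\]
since both summands are strictly positive by the choice of $i,j$. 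This contradicts concavity of $Q$ on $e^\perp$, and establishes \eqref{Q_conc_subset_Q1}.

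The inclusion \eqref{Q_concave_subset_Q} then follows by combining \eqref{Q_conc_subset_Q1} with Proposition~\ref{Q1subsetQ}. I do not foresee any real obstacle in this argument; the only thing to watch for is the strict inequality in the definition of not belonging to $\cQ^n_1$, which is exactly what is needed to make the sum $(Q_{ii} - Q_{ij}) + (Q_{jj} - Q_{ij})$ strictly positive and thus contradict the (non-strict) concavity inequality $d^T Q d \leq 0$.
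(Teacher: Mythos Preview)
Your argument is correct, and it is in fact more direct than the paper's own proof. The paper also argues by contradiction, assuming the minimum entry $Q_{kl}$ is strictly off-diagonal, but then invokes Lemma~\ref{alt_char_cQconcave} to form the auxiliary matrix $Y = -(I-(1/n)E)Q(I-(1/n)E) \in {\cal PSD}^n$ and uses the positive semidefiniteness inequality $Y_{kk}+Y_{ll}\ge 2Y_{kl}$; after expanding the entries of $Y$ the cross terms cancel and one recovers $Q_{kk}+Q_{ll}\le 2Q_{kl}$, the desired contradiction. Your approach short-circuits this detour by applying the defining concavity inequality \eqref{def_Q_concave} directly to the test direction $d=e_i-e_j\in e^\perp$, which yields $Q_{ii}-2Q_{ij}+Q_{jj}\le 0$ in one line. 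The two routes are equivalent in content (the paper's PSD inequality for $Y$ along $e_k-e_l$ is exactly your concavity inequality for $Q$ along the same direction, since $(I-(1/n)E)(e_k-e_l)=e_k-e_l$), but your version is shorter and does not require the auxiliary lemma.
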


\begin{proof}
Suppose, for a contradiction, that \eqref{Q_conc_subset_Q1} does not hold. Then, there exists $Q \in \cQ^n_{\textrm{concave}}$ such that $Q \not \in \cQ^n_1$, i.e., there exists a tuple $(k,l)$ such that $1 \le k < l \le n$ and
\be \label{kl}
\min\limits_{1 \leq i \leq j \leq n} Q_{ij} = Q_{kl} <  \min\limits_{i = 1,\ldots,n} Q_{ii}.
\ee

Now, let us define 
\begin{equation} \label{def_Y}
Y:= - \left(I- (1/n)E\right) Q \left(I-(1/n)E\right).
\end{equation}
By Lemma~\ref{alt_char_cQconcave}, $Y \in {\cal PSD}^n$, which implies that 
\begin{equation} \label{psd_ineq}
Y_{kk} + Y_{ll} \geq 2 Y_{kl}.
\end{equation}
By \eqref{def_Y}, 
\begin{eqnarray*}
Y_{kk} & = & - Q_{kk} + \frac{2}{n} e^T Q e_k - \frac{1}{n^2} e^T Q e,\\
Y_{ll} & = & - Q_{ll} + \frac{2}{n} e^T Q e_l - \frac{1}{n^2} e^T Q e,\\
Y_{kl} & = & - Q_{kl} + \frac{1}{n} e^T Q e_l + \frac{1}{n} e^T Q e_k - \frac{1}{n^2} e^T Q e,
\end{eqnarray*}
which, together with \eqref{psd_ineq}, implies that
\[
Q_{kk} + Q_{ll} \leq 2 Q_{kl},
\]
contradicting \eqref{kl}. The relation \eqref{Q_conc_subset_Q1} follows. The inclusion \eqref{Q_concave_subset_Q} is an immediate consequence of Proposition~\ref{Q1subsetQ}.

\end{proof}

We close this section by making two observations. First, we remark that the inclusion \eqref{Q_conc_subset_Q1} can be strict since, for instance, 
\[
Q = \begin{bmatrix} 0 & 0 \\ 0 & 1 \end{bmatrix} \in \cQ^2_1 \backslash \cQ^2_{\textrm{concave}}
\]
since, for $d = [-1,1]^T$, we have $e^T d = 0$ but $d^T Q d > 0$.

Second, we illustrate, by an example, that the set of matrices that satisfy the relation \eqref{Q1_implication} is strictly larger than $\cQ^n_1$. For instance,
\[
Q = \begin{bmatrix} 0 & 0 & 0 \\ 0 & 2 & -1 \\ 0 & -1 & 2 \end{bmatrix} \not \in \cQ^3_1,
\]
whereas $\Omega(Q) = \{e_1\}$. On the other hand, for $n = 5$, recall that 
the condition \eqref{Q1_implication} is sufficient to ensure that $Q \in \cQ^n$ by Lemma~\ref{n_5_singleton}.

\subsection{Standard Quadratic Programs with a Convex Objective Function} \label{psd_on_e_perp}

In this section, we focus on instances of (StQP) whose objective function is convex over $\Delta_n$. Note that such instances are precisely characterized by matrices $Q \in \cS^n$ that are positive semidefinite on $e^\perp$, i.e., 
\begin{equation} \label{psd_e_perp}
d^T Q d \geq 0, \quad \forall d \in \R^n ~\textrm{such that}~ e^T d = 0.
\end{equation}

Let us accordingly define the following set:
\begin{equation} \label{def_cQ2}
\cQ^n_2 = \left\{Q \in \cS^n: d^T Q d \geq 0, \quad \forall d \in \R^n ~\textrm{such that}~ e^T d = 0 \right\}.
\end{equation}

Clearly, we have
\begin{equation} \label{psd_inclusion}
{\cal PSD}^n + \cL \subseteq \cQ^n_2,
\end{equation}
where $\cL$ is given by \eqref{def_L}. For any $Q \in \cQ^n_2$, consider the corresponding (StQP) instance. It follows from \eqref{rels_D} and \eqref{eq1}--\eqref{eq6} that any KKT point is a local minimizer. In fact, by the convexity of the objective function over the feasible region, any KKT point is, in fact, a global minimizer.

In this section, we aim to establish that $\cQ^n_2 \subseteq \cQ^n$. First, we present a technical result that is similar to Lemma~\ref{alt_char_cQconcave}, which would be useful to prove this inclusion.

\begin{lemma} \label{alt_char_cQ2}
For any $Q \in \cQ^n_2$,  
\begin{equation} \label{alt_char_psd}
\left(I - e x^T \right) Q \left(I - x e^T\right) \in {\cal PSD}^n, \quad \textrm{for each}~x \in \Delta_n.
\end{equation}
\end{lemma}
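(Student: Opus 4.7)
The plan is to mimic exactly the short argument used for Lemma~\ref{alt_char_cQconcave}, since the structural ingredient is the same: conjugating $Q$ by $I - xe^T$ has the effect of restricting the associated quadratic form to $e^\perp$, which is precisely where $Q \in \cQ^n_2$ is assumed nonnegative.

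More concretely, I would fix an arbitrary $y \in \R^n$ and show directly that $y^T (I - ex^T) Q (I - xe^T) y \geq 0$. Set $d = (I - xe^T)y \in \R^n$. Apply Lemma~\ref{rank1update} with $u = x$ and $v = e$; the hypothesis $u^T v = x^T e = 1$ is valid because $x \in \Delta_n$, and the lemma then yields $\mathbf{R}(I - xe^T) = e^\perp$, so $d \in e^\perp$, i.e., $e^T d = 0$. By the defining property \eqref{def_cQ2} of $\cQ^n_2$, this gives $d^T Q d \geq 0$.

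It remains to observe that $y^T (I - ex^T) Q (I - xe^T) y$ equals $d^T Q d$. This is immediate from the identity $(I - ex^T) = (I - xe^T)^T$, which gives
\[
y^T (I - ex^T) Q (I - xe^T) y = \bigl((I - xe^T) y\bigr)^T Q \bigl((I - xe^T) y\bigr) = d^T Q d \geq 0.
\]
Since $y \in \R^n$ was arbitrary, the symmetric matrix $(I - ex^T) Q (I - xe^T)$ is positive semidefinite, establishing \eqref{alt_char_psd}.

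There is essentially no obstacle here: the entire content is the observation that $(I - xe^T)$ acts as a projection-type map onto $e^\perp$ along $x$, which is exactly what Lemma~\ref{rank1update} provides. The only care needed is to apply Lemma~\ref{rank1update} with the roles of $u$ and $v$ swapped relative to how it was used in Lemma~\ref{alt_char_cQconcave} (there, one needed the range to lie in $x^\perp$; here one needs the range to lie in $e^\perp$), but both applications are valid since $x^T e = e^T x = 1$.
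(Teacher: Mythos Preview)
Your proof is correct and follows exactly the same approach as the paper, which simply states that the assertion follows directly from Lemma~\ref{rank1update} since $e^T x = 1$ for each $x \in \Delta_n$. You have merely unpacked the one-line argument by making the substitution $d = (I - xe^T)y$ explicit.
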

\begin{proof}
The assertion follows directly from Lemma~\ref{rank1update} since $e^T x = 1$ for each $x \in \Delta_n$.
\end{proof}

Next, we present our main result in this section.

\begin{proposition} \label{Q2subsetQ}
The following relation holds:
\begin{equation} \label{Q2_subset_Q}
\cQ^n_2 \subseteq \cQ^n,    
\end{equation}
where $\cQ^n_2$ and $\cQ^n$ are given by \eqref{def_cQ2} and \eqref{def_Q}, respectively.
\end{proposition}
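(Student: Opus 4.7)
The plan is to exhibit, for any $Q \in \cQ^n_2$, an explicit decomposition of the form required by Proposition~\ref{charac_2}, taking $x^*$ to be any global minimizer of the corresponding (StQP). Since $\Delta_n$ is compact, $\Omega(Q) \neq \emptyset$, so I can fix $x^* \in \Omega(Q)$ and set $\nu := \nu(Q) = (x^*)^T Q x^*$. Because $Q \in \cQ^n_2$, the objective of (StQP) is convex on the affine hull of $\Delta_n$, so $x^*$ is in particular a local minimizer and the KKT conditions \eqref{eq1}--\eqref{eq5} yield a vector $s := Q x^* - \nu e \in \R^n_+$ with $(x^*)^T s = 0$. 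These two facts will do all the work.

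The positive semidefinite ingredient is suggested directly by Lemma~\ref{alt_char_cQ2}: define
\[
 P := \left(I - e (x^*)^T\right) Q \left(I - x^* e^T\right) \in {\cal PSD}^n.
\]
The identity $(I - x^* e^T) x^* = 0$ immediately gives $P x^* = 0$, so $P \in \cP_{x^*}$ by \eqref{def_Px}. Set $N := Q - \nu E - P$; the plan is to show $N \in \cN_{x^*}$. Expanding $P$ and using $(x^*)^T Q x^* = \nu$, one finds
\[
 P = Q - Q x^* e^T - e (x^*)^T Q + \nu E,
\]
so substituting $Q x^* = s + \nu e$ into $N = Q - \nu E - P$ and letting the $\nu E$ terms cancel yields the clean identity
\[
 N = s e^T + e s^T.
\]
This is componentwise nonnegative since $s \geq 0$, and $(x^*)^T N x^* = 2 (e^T x^*)(s^T x^*) = 0$, so $N \in \cN_{x^*}$ by \eqref{def_Nx}.

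Combining the pieces, $Q = P + N + \nu E$ with $P \in \cP_{x^*}$, $N \in \cN_{x^*}$, $\nu E \in \cL$, and Proposition~\ref{charac_2} delivers $Q \in \cQ_{x^*} \subseteq \cQ^n$, which establishes \eqref{Q2_subset_Q}. The only real obstacle is identifying the right PSD summand; Lemma~\ref{alt_char_cQ2} makes that choice essentially forced, after which the KKT identities $Qx^* = s + \nu e$ and $s^T x^* = 0$ collapse the remainder exactly onto the rank-two nonnegative matrix $s e^T + e s^T$, and no further machinery is needed.
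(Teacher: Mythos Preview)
Your proof is correct and follows essentially the same approach as the paper: take $x^* \in \Omega(Q)$, define $P = (I - e(x^*)^T)Q(I - x^*e^T) \in \cP_{x^*}$ via Lemma~\ref{alt_char_cQ2}, use the KKT identity $Qx^* = s + \nu e$ to reduce the remainder $Q - \nu E - P$ to $se^T + es^T \in \cN_{x^*}$, and conclude by Proposition~\ref{charac_2}. The only cosmetic difference is that the paper treats the case $Q \in {\cal PSD}^n$ separately (which is unnecessary, as your argument already covers it) and derives $N = se^T + es^T$ by first writing $N = Qx^*e^T + e(x^*)^TQ - 2\nu E$ before substituting, whereas you substitute directly.
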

\begin{proof}
If $Q \in {\cal PSD}^n$, then $Q \in \cQ^n$ by \cite[Lemma 2.7]{KimKT2000}. Otherwise,  
let $Q \in \cQ^n_2$ and $x \in \Omega(Q)$. It suffices to show that $Q \in \cQ_x$, where $\cQ_x$ is given by \eqref{def_Qx}. By Proposition~\ref{charac_2}, we need to construct a decomposition
\[
Q = P + N + \left( x^T Q x \right) E,
\]
where $P \in \cP_x$, $N \in \cN_x$, and $\cP_x$ and $\cN_x$ are given by \eqref{def_Px} and \eqref{def_Nx}, respectively.

Let us define
\[
P = \left(I - e x^T \right) Q \left(I - x e^T \right).
\]
By Lemma~\ref{alt_char_cQ2}, $P \in {\cal PSD}^n$. Therefore, 
\[
P = Q - Q x e^T - e x^T Q + \left(x^T Q x \right) E,
\]
or equivalently, 
\[
Q - \left(x^T Q x \right) E = P + \left( Q x e^T + e x^T Q - 2 \left( x^T Q x \right) E \right).
\]
Let us accordingly define
\[
N = Q x e^T + e x^T Q - 2 \left( x^T Q x \right) E.
\]
It suffices to show that $N \in \cN_x$. 
Since $x \in \Omega(Q)$, $x$ is a KKT point, i.e., there exists $s \in \R^n$ such that the conditions \eqref{eq1} -- \eqref{eq5} are satisfied. By~\eqref{eq1},
\[
Qx - \left( x^T Q x \right) e - s = 0,
\]
which implies that
\begin{eqnarray*}
Q x e^T - \left( x^T Q x \right) E - s e^T & = & 0, \\
e x^T Q - \left( x^T Q x \right) E - e s^T & = & 0.
\end{eqnarray*}
It follows from these two equations that
\begin{eqnarray*}
N & = & Q x e^T + e x^T Q - 2 \left( x^T Q x \right) E \\
 & = & s e^T + \left( x^T Q x \right) E + e s^T + \left( x^T Q x \right) E - 2 \left( x^T Q x \right) E \\
 & = & s e^T + e s^T.
\end{eqnarray*}
Finally, note that $N \in \cN_x$ since $N \in \cN^n$ and $x^T N x = 0$ by \eqref{eq2}, \eqref{eq4}, and \eqref{eq5}. It follows from Proposition~\ref{charac_2} that $Q \in \cQ_x$.
\end{proof}

Note that the proof of Proposition~\ref{Q2subsetQ} is based on an explicit construction of the decomposition of a matrix $Q \in \cQ^n_2$ given by Proposition~\ref{charac_2}. 

We close this section by the following observation. By Proposition~\ref{Q2subsetQ}, we have $\cQ^n_2 \subseteq \cQ^n$, where $\cQ^n_2$ and $\cQ^n$ are given by \eqref{def_cQ2} and \eqref{def_Q}, respectively. Clearly, $\cQ^n_2 = - \cQ^n_{\textrm{concave}}$ by \eqref{def_cQ2} and \eqref{def_Q_concave}. Therefore, for each $Q \in \cQ^n_2 \backslash \{0\}$, it follows from Propositions~\ref{Q_concave_subset_Q1} and \ref{Q2subsetQ} that each of the hyperplanes 
\begin{eqnarray*}
\cH_1 & = & \left\{Y \in \cS^n: \langle Q, Y \rangle = \ell(Q) = \nu(Q) \right\}, \\
\cH_2 & = & \left\{Y \in \cS^n: \langle -Q, Y \rangle = \ell(-Q) = \nu(-Q) \right\},
\end{eqnarray*}
is a supporting hyperplane of both of the feasible regions of (DN-P) and (CP).

\subsection{Maximum Weighted Cliques on Perfect Graphs} \label{max_w_clique}

In this section, we identify another family of instances of (StQP) that admits an exact doubly nonnegative relaxation.

First, we briefly review the maximum weighted clique problem. Let $G = (V_G,E_G)$ be a simple, undirected graph with $V_G = \{1,\ldots,n\}$ and let $w \in \R^n_{++}$, where $w_k$ denotes the weight of vertex $k,~k = 1,\ldots,n$. A set $C \subseteq V_G$ is a clique if all pairs of vertices in $C$ are connected by an edge. The weight of a clique $C \subseteq V_G$, denoted by $w(C)$, is given by $w(C) = \sum\limits_{j \in C} w_j$. The maximum weighted clique problem is concerned with finding a clique with the maximum weight, and its weight is denoted by $\omega(G,w)$. Note that the maximum weighted clique problem is equivalent to the maximum clique problem if all the weights are identical.

For a given graph $G = (V_G,E_G)$, the complement of $G$, denoted by $\overbar{G}$, is the graph on $V_G$ obtained by deleting all edges in $E_G$ and connecting each pair of nonadjacent vertices in $G$. For a set $V \subseteq V_G$, the subgraph of $G$ induced by $V$ is the graph whose vertices are given by $V$ and whose edges are given by the edges in $E_G$ with both endpoints in $V$. The maximum weighted clique problem is therefore concerned with finding an induced complete subgraph in $G$ with the maximum weight. Recall that $G$ is a perfect graph if neither $G$ nor its complement contains an odd cycle of length at least five as an induced subgraph~\cite{chudnovsky2006strong}. 

We next discuss the connection between the maximum weighted clique problem and (StQP). Let $G = (V_G,E_G)$ be a graph with $V_G = \{1,\ldots,n\}$ and let $w \in \R^n_{++}$, where $w_k$ denotes the weight of vertex $k,~k = 1,\ldots,n$. Let us define the following class of matrices:
\begin{equation} \label{def_M_class}
{\cal M}(G,w) = \left\{B \in {\cal S}^n: \begin{array}{ll}B_{kk} = 1/w_k, & k = 1,\ldots,n,\\ B_{ij} = 0 , & (i,j) \in E_G,\\ 2 B_{ij} \geq B_{ii} + B_{jj}, &  (i,j) \in E_{\overbar{G}} \end{array} \right\}.
\end{equation}

The following theorem establishes the aforementioned connection.

\begin{theorem}[Gibbons et al., 1997] \label{mwss_stqp}
Let $G = (V_G,E_G)$ be a graph with $V_G = \{1,\ldots,n\}$ and let $w \in \R^n_{++}$, where $w_k$ denotes the weight of vertex $k,~k = 1,\ldots,n$. Then, for any $Q \in {\cal M}(G,w)$,
\begin{equation} \label{M_class_nu}
\nu(Q) = \min\{x^T Q x: x \in \Delta_n\} = \frac{1}{\omega(G,w)}.
\end{equation}
\end{theorem}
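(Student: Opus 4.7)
The plan is to prove both bounds $\nu(Q) \leq 1/\omega(G,w)$ and $\nu(Q) \geq 1/\omega(G,w)$, in the classical Motzkin--Straus style adapted to the weighted setting.

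For the upper bound, I would exhibit an explicit feasible $x^* \in \Delta_n$. Let $C^* \subseteq V_G$ be a maximum weighted clique, so that $w(C^*) = \omega(G,w)$. Define
\[
x^*_k = \begin{cases} w_k / \omega(G,w), & k \in C^*, \\ 0, & k \notin C^*. \end{cases}
\]
Clearly $x^* \in \Delta_n$. Since $(i,j) \in E_G$ for all distinct $i,j \in C^*$, the off-diagonal contributions vanish by the second condition in \eqref{def_M_class}, so
\[
(x^*)^T Q x^* = \sum_{k \in C^*} (x^*_k)^2 \, Q_{kk} = \sum_{k \in C^*} \frac{w_k^2}{\omega(G,w)^2} \cdot \frac{1}{w_k} = \frac{w(C^*)}{\omega(G,w)^2} = \frac{1}{\omega(G,w)}.
\]
This yields $\nu(Q) \leq 1/\omega(G,w)$.

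For the lower bound, I would use a support-reduction argument together with a direct minimization on cliques. First, I claim that for any $x \in \Delta_n$ there exists $\tilde{x} \in \Delta_n$ with $A(\tilde{x})$ a clique in $G$ and $\tilde{x}^T Q \tilde{x} \leq x^T Q x$. To see this, suppose $i, j \in A(x)$ are non-adjacent in $G$, i.e., $(i,j) \in E_{\overbar{G}}$. The third condition in \eqref{def_M_class} gives
\[
(e_i - e_j)^T Q (e_i - e_j) = Q_{ii} + Q_{jj} - 2Q_{ij} \leq 0,
\]
so the one-dimensional function $t \mapsto (x + t(e_i-e_j))^T Q (x+t(e_i-e_j))$ is concave in $t$ on the feasible interval $[-x_i, x_j]$, and hence its minimum on that interval is attained at an endpoint. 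Shifting all mass from one of $\{i,j\}$ to the other strictly reduces the support without increasing the objective. Repeating this process finitely many times, I obtain $\tilde{x}$ whose support $C = A(\tilde{x})$ is a clique in $G$.

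Now on such a clique $C$, all off-diagonal entries $Q_{ij}$ with $i \neq j$ in $C$ vanish, so
\[
\tilde{x}^T Q \tilde{x} = \sum_{k \in C} \tilde{x}_k^2 \, Q_{kk} = \sum_{k \in C} \frac{\tilde{x}_k^2}{w_k}, \qquad \sum_{k \in C} \tilde{x}_k = 1, \quad \tilde{x}_k \geq 0.
\]
Minimizing this strictly convex quadratic over the simplex restricted to $C$ (e.g., by Cauchy--Schwarz: $1 = (\sum_{k \in C} \sqrt{w_k} \cdot \tilde{x}_k/\sqrt{w_k})^2 \leq w(C) \sum_{k \in C} \tilde{x}_k^2/w_k$) gives $\tilde{x}^T Q \tilde{x} \geq 1/w(C) \geq 1/\omega(G,w)$. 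Combining everything yields $x^T Q x \geq \tilde{x}^T Q \tilde{x} \geq 1/\omega(G,w)$ for every $x \in \Delta_n$, and thus $\nu(Q) \geq 1/\omega(G,w)$.

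The only delicate step is the support-reduction argument; one must be careful that the concavity inequality is non-strict (so the minimum being at an endpoint is not strict either), but this is enough to reduce the support, and since the support strictly decreases at each iteration, the procedure terminates in at most $|A(x)|$ steps. The rest is algebraic.
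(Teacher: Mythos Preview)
Your proof is correct. Note, however, that the paper does not supply its own proof of this theorem: it is quoted from \cite{ref:Gibbons1997} and only accompanied by the remark that, for a maximum weight clique $C$, the vector $x^*_j = w_j/w(C)$ for $j \in C$ (and $0$ otherwise) is an optimal solution of the associated (StQP). This is precisely the feasible point you use for the upper bound.

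Your support-reduction step is also in the spirit of the paper, though established independently: the paper later invokes Theorem~\ref{Scozzari} (Scozzari--Tardella) to assert that some optimal solution has support forming a clique in the convexity graph $G_Q$, and observes that $G_Q = G$ for every $Q \in \mathcal{M}(G,w)$. Your direct concavity argument along $e_i - e_j$ is exactly the mechanism underlying that result in this special case, and the Cauchy--Schwarz computation on a clique is the standard way to finish. So while there is no proof in the paper to compare against line by line, your argument is fully consistent with the ingredients the paper assembles around this theorem, and it stands on its own.
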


Theorem~\ref{mwss_stqp} is a generalization of the well-known Motzkin-Straus Theorem~\cite{motzkin1965maxima} that establishes the first connection between the (unweighted) maximum clique problem and a particular instance of (StQP) associated with the underlying graph.

We next discuss the weighted Lov{\'a}sz theta number. Let $G = (V_G,E_G)$ be a graph with $V_G = \{1,\ldots,n\}$ and let $w \in \R^n_{++}$, where $w_k$ denotes the weight of vertex $k,~k = 1,\ldots,n$. The weighted Lov{\'a}sz theta number~\cite{ref:lovasz1979,ref:grotschel1981} corresponding to the complement graph $\overbar{G}$ is given by
\begin{equation} \label{w_lov}
\vartheta(\overbar{G},w) = \max\left\{\langle W,X \rangle: \langle I, X \rangle = 1, \quad X_{ij} = 0,~(i,j) \in E_{\overbar{G}}, \quad X \in {\cal PSD}^n\right\},
\end{equation}
where $W \in \cS^n$ is given by 
\begin{equation} \label{def_W}
W_{ij} = \sqrt{w_i w_j},~1 \leq i \leq j \leq n.
\end{equation}

The weighted Lov{\'a}sz theta number satisfies $\omega(G,w) \leq \vartheta(\overbar G,w)$~\cite{ref:lovasz1979,ref:grotschel1981}. Furthermore,  
\begin{equation} \label{lov_per}
\omega(G,w) = \vartheta(\overbar{G},w) \quad \textrm{if $G$ is a perfect graph}.
\end{equation}

The weighted Lov{\'a}sz theta number can be strengthened by replacing the constraint $X \in {\cal PSD}^n$ by $X \in {\cal DN}^n$~\cite{schrijver1979comparison}:
\begin{equation} \label{w_lov_sch}
\vartheta^\prime(\overbar{G},w) = \max\left\{\langle W,X \rangle: \langle I, X \rangle = 1, \quad X_{ij} = 0,~(i,j) \in E_{\overbar{G}}, \quad X \in {\cal DN}^n\right\},
\end{equation}
The strengthened version of the weighted Lov{\'a}sz theta number satisfies the following relations:
\begin{equation} \label{sandwich}
\omega(G,w) \leq \vartheta^\prime(\overbar{G},w) \leq \vartheta(\overbar{G},w).
\end{equation}
By \eqref{lov_per} and \eqref{sandwich}, 
\begin{equation} \label{lov_sch_per}
\omega(G,w) = \vartheta^\prime(\overbar{G},w) \quad \textrm{if $G$ is a perfect graph}.
\end{equation}

For any $w \in \R^n_{++}$ and any $G = (V_G,E_G)$, where $V_G = \{1,\ldots,n\}$, , we next establish that the strengthened version of the weighted Lov{\'a}sz theta number given by \eqref{w_lov_sch} coincides with the reciprocal of the lower bound arising from the doubly nonnegative relaxation of the (StQP) instance corresponding to any $Q \in {\cal M}(G,w)$, i.e., for any $w \in \R^n_{++}$ and any $Q \in {\cal M}(G,w)$,
\[
\ell(Q) = \min\left\{\langle Q, X \rangle: \langle E, X \rangle = 1, \quad X \in {\cal DN}^n \right\} = \frac{1}{\vartheta^\prime(\overbar{G},w)}.
\]

First, we prove a useful property of the doubly nonnegative relaxation.

\begin{lemma} \label{DNN_optimal_exist}
Let $G = (V_G,E_G)$ be simple, undirected graph with $V_G = \{1,\ldots,n\}$. For any $w \in \R^n_{++}$ and any $Q \in {\cal M}(G,w)$, where ${\cal M}(G,w)$ is given by \eqref{def_M_class}, there exists an optimal solution $X^* \in \cS^n$ of (DN-P) such that
\[
X^*_{ij} = 0, \quad \forall (i,j) \in E_{\overbar{G}}.
\]
\end{lemma}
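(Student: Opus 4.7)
The plan is to start from an arbitrary optimal solution $X^* \in \mathcal{DN}^n$ of (DN-P) -- such a solution exists since, as noted earlier in the paper, Slater's condition holds and (DN-P) attains its optimal value -- and then iteratively modify $X^*$ along the non-edges of $G$ via a simple rank-one update, one non-edge at a time. At each step I will verify that the update preserves (i)~the linear constraint $\langle E, X \rangle = 1$, (ii)~membership in $\mathcal{DN}^n$, and (iii)~optimality, while forcing a target off-diagonal entry to zero. Because the update only touches four entries of the matrix, the zeros produced at earlier iterations are not destroyed.

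For the key step, fix $(i,j) \in E_{\overline{G}}$ and set $\epsilon = X^*_{ij} \geq 0$. Define
\[
X' \;=\; X^* \;+\; \epsilon\, (e_i - e_j)(e_i - e_j)^T.
\]
A direct computation shows $X'_{ii} = X^*_{ii} + \epsilon$, $X'_{jj} = X^*_{jj} + \epsilon$, $X'_{ij} = X^*_{ji} = 0$, and $X'_{kl} = X^*_{kl}$ otherwise. Hence $X' \in \mathcal{N}^n$; it is PSD as the sum of $X^* \in \mathcal{PSD}^n$ and a rank-one PSD matrix; and $\langle E, X' \rangle = \langle E, X^* \rangle + \epsilon \cdot e^T(e_i - e_j)(e_i - e_j)^T e = \langle E, X^* \rangle = 1$, since $e^T(e_i - e_j) = 0$. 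Thus $X'$ is feasible for (DN-P). For the objective,
\[
\langle Q, X' \rangle - \langle Q, X^* \rangle \;=\; \epsilon\, (e_i - e_j)^T Q (e_i - e_j) \;=\; \epsilon\,(Q_{ii} + Q_{jj} - 2 Q_{ij}) \;\leq\; 0,
\]
where the inequality uses precisely the defining property $2 Q_{ij} \geq Q_{ii} + Q_{jj}$ of $\mathcal{M}(G,w)$ for non-edges $(i,j) \in E_{\overline{G}}$. Since $X^*$ is optimal and $X'$ is feasible, equality must hold, so $X'$ is also an optimal solution with $X'_{ij} = 0$.

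Iterating this construction over an enumeration of all non-edges $(i,j) \in E_{\overline{G}}$ therefore produces, in finitely many steps, an optimal solution at which every such entry vanishes. The crucial observation is that the rank-one update modifies only the $2 \times 2$ principal block indexed by $\{i,j\}$; in particular, any non-edge entry $(k,\ell)$ previously zeroed out with $\{k,\ell\} \neq \{i,j\}$ remains unaffected, so the zero pattern is monotone along the iteration. I do not anticipate a significant obstacle: the entire argument hinges on the compatibility between the inequality $2 Q_{ij} \geq Q_{ii} + Q_{jj}$ encoded in $\mathcal{M}(G,w)$ and the PSD-preserving nature of the rank-one Laplacian-type perturbation $(e_i - e_j)(e_i - e_j)^T$, and the nonnegativity of $X^*_{ij}$ ensured by $X^* \in \mathcal{N}^n$ guarantees that the perturbation points in the correct direction.
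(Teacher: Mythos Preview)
Your proposal is correct and follows essentially the same argument as the paper: both start from an optimal $X^*$, apply the rank-one perturbation $X^* + \alpha(e_i - e_j)(e_i - e_j)^T$ with $\alpha = X^*_{ij}$ to zero out a non-edge entry while preserving feasibility and optimality (using $2Q_{ij} \geq Q_{ii}+Q_{jj}$ for $(i,j)\in E_{\overline G}$), and then iterate over all non-edges. Your write-up is in fact slightly more careful than the paper's, since you explicitly verify the constraint $\langle E, X'\rangle = 1$ and note that the update only alters the $\{i,j\}$ principal block so that previously zeroed entries are preserved.
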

\begin{proof}
Let $Q \in {\cal M}(G,w)$ and $X^* \in {\cal DN}^n$ be an optimal solution of (DN-P). Suppose that $X^*_{ij} > 0$ for some $(i,j) \in E_{\overbar{G}}$. Let us define
\[
X(\alpha) := X^* + \alpha (e_i - e_j) (e_i - e_j)^T.
\]
Observe that $X(\alpha) \in {\cal DN}^n$ for any $0 \le \alpha \le X^*_{ij}$. Furthermore,
\[
\langle Q, X(\alpha) \rangle = \langle Q, X^* \rangle + \alpha\underbrace{(Q_{ii} + Q_{jj} - 2Q_{ij})}_{\le 0} \le \langle Q, X^* \rangle,
\]
where the inequality follows from \eqref{def_M_class}. By setting $\alpha = X^*_{ij}$ and repeating this procedure for any other edges in $E_{\overbar{G}}$ if necessary, we obtain an optimal solution with the desired property.
\end{proof}

We are now in a position to establish the aforementioned relation.

\begin{proposition} \label{dnn_char_M}
Let $G = (V_G,E_G)$ be simple, undirected graph with $V_G = \{1,\ldots,n\}$. For any $w \in \R^n_{++}$ and any $Q \in {\cal M}(G,w)$,  where ${\cal M}(G,w)$ is given by \eqref{def_M_class},
\begin{equation} \label{M_class_ell}
\ell(Q) = \frac{1}{\vartheta^\prime(\overbar{G},w)},
\end{equation}
where $\vartheta^\prime(\overbar{G},w)$ is given by \eqref{w_lov_sch}.
\end{proposition}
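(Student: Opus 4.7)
The plan is to exhibit a bijective correspondence (up to rescaling) between feasible solutions of (DN-P) for $Q \in {\cal M}(G,w)$ that satisfy the additional edge conditions $X_{ij} = 0$ for $(i,j) \in E_{\overbar{G}}$ and feasible solutions of the strengthened weighted Lovász theta SDP \eqref{w_lov_sch}, under which the corresponding objective values are reciprocals. The edge conditions may be imposed on (DN-P) without loss of generality thanks to Lemma~\ref{DNN_optimal_exist}.

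Concretely, I would first observe that for any $X \in {\cal DN}^n$ with $X_{ij} = 0$ whenever $(i,j) \in E_{\overbar{G}}$, the definition \eqref{def_M_class} of ${\cal M}(G,w)$ gives
\[
\langle Q, X \rangle \;=\; \sum_{i=1}^n Q_{ii} X_{ii} \;+\; \sum_{i \neq j} Q_{ij} X_{ij} \;=\; \sum_{k=1}^n \frac{X_{kk}}{w_k},
\]
since each off-diagonal product $Q_{ij} X_{ij}$ vanishes (either $Q_{ij}=0$ for $(i,j) \in E_G$ or $X_{ij}=0$ for $(i,j) \in E_{\overbar{G}}$). Combined with Lemma~\ref{DNN_optimal_exist}, this means
\[
\ell(Q) \;=\; \min\Bigl\{\sum_{k=1}^n \tfrac{X_{kk}}{w_k}:~ \langle E,X\rangle = 1,~ X_{ij}=0~\forall (i,j)\in E_{\overbar{G}},~ X \in {\cal DN}^n\Bigr\}.
\]

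Next, I would apply the change of variables $Z = D X D$ with $D = \operatorname{diag}(\sqrt{w_1},\ldots,\sqrt{w_n})$ in the definition \eqref{w_lov_sch}. Since $D$ has positive diagonal, $X \in {\cal DN}^n$ if and only if $Z \in {\cal DN}^n$ by Lemma~\ref{lem_gen_rels}~(iii), and the support pattern is preserved. A direct computation using \eqref{def_W} yields $\langle I, X \rangle = \sum_k Z_{kk}/w_k$ and $\langle W, X \rangle = \langle E, Z \rangle$, so
\[
\vartheta^\prime(\overbar{G},w) \;=\; \max\Bigl\{\langle E, Z\rangle :~ \sum_{k=1}^n \tfrac{Z_{kk}}{w_k} = 1,~ Z_{ij} = 0~\forall (i,j)\in E_{\overbar{G}},~ Z \in {\cal DN}^n\Bigr\}.
\]
Comparing the two reformulations, I would establish a scaling bijection between their feasible sets: given a feasible $X$ for the first problem with value $\alpha = \sum_k X_{kk}/w_k > 0$, the matrix $Z = X/\alpha$ is feasible for the second problem with value $\langle E, Z \rangle = 1/\alpha$, and vice versa. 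Taking optima in both directions gives $\ell(Q) \cdot \vartheta^\prime(\overbar{G},w) = 1$.

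The main obstacle, and the only slightly delicate point, is justifying that the scalings are well-defined, i.e., that $\ell(Q) > 0$ and $\vartheta^\prime(\overbar{G},w) > 0$, so that we may divide. For the latter, the rank-one matrix $X = e_k e_k^T$ (for any $k$ with $w_k$ maximal) is feasible in \eqref{w_lov_sch} with $\langle W, X \rangle = w_k > 0$. For the former, if $\ell(Q) = 0$, then the reformulation above forces $X^*_{kk} = 0$ for all $k$ at any optimizer; combined with $X^* \in {\cal PSD}^n$ this yields $X^* = 0$, contradicting $\langle E, X^*\rangle = 1$. Once positivity is in hand, the rest is clean rescaling.
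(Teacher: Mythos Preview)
Your proof is correct and follows essentially the same approach as the paper: both rely on Lemma~\ref{DNN_optimal_exist} to impose the edge pattern $X_{ij}=0$ for $(i,j)\in E_{\overbar G}$, then pass between the two problems via the diagonal scaling $D=\operatorname{diag}(\sqrt{w_1},\ldots,\sqrt{w_n})$ (invoking Lemma~\ref{lem_gen_rels}~(iii)), and both verify positivity of $\ell(Q)$ and $\vartheta^\prime(\overbar G,w)$ before dividing. The only difference is presentational: the paper constructs the scaled feasible point explicitly in each direction and computes its objective value, whereas you first reformulate both problems into a common shape (same feasible cone and sparsity pattern, with the roles of objective and normalization swapped) and then apply a single reciprocity-by-rescaling argument; this makes the relation $\ell(Q)\cdot\vartheta^\prime(\overbar G,w)=1$ a bit more transparent but is not a genuinely different route.
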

\begin{proof}
Let $w \in \R^n_{++}$ and let $Q \in {\cal M}(G,w)$.

First, we will show that $\displaystyle \ell(Q) \le 1/\vartheta^\prime(\overbar{G},w)$. Note that an optimal solution $X^*_{LS} \in {\cal DN}^n$ of \eqref{w_lov_sch} exists since the feasible region is nonempty and compact. Furthermore, $\vartheta^\prime(\overbar G,w) = \langle W, X^*_{LS} \rangle > 0$ since $W$ has strictly positive components and $\langle I, X^*_{LS} \rangle = 1$. Let us define $\hat{X}_{LS} = D_{LS} X^*_{LS} D_{LS}$, where $D_{LS} \in \cS^n$ is a diagonal matrix given by 
\begin{equation} \label{def_D_LS}
D_{LS} = \frac{1}{\sqrt{\vartheta^\prime(\overbar{G},w)}} \begin{bmatrix} \sqrt{w_1} & & \\ & \ddots & \\ & & \sqrt{w_n} \end{bmatrix}.
\end{equation}
By Lemma~\ref{lem_gen_rels} (iii), $\hat{X}_{LS} \in {\cal DN}^n$. Furthermore, 
\[
\vartheta^\prime(\overbar{G},w) = \langle W, X^*_{LS} \rangle = \langle D_{LS}^{-1} W D_{LS}^{-1}, \hat{X}_{LS} \rangle = \vartheta^\prime(\overbar{G},w) \langle E, \hat{X}_{LS} \rangle,
\]
where we used \eqref{def_W} to derive the third equality. Therefore, $\langle E, \hat{X}_{LS} \rangle = 1$, i.e., $\hat{X}_{LS}$ is a feasible solution of (DN-P). By \eqref{def_M_class}, for any $Q \in {\cal M}(G,w)$, 
\[
D_{LS} Q D_{LS} = \frac{1}{\vartheta^\prime(\overbar{G},w)} \left(I + N_{LS} \right),
\]
where $N_{LS} \in \cN^n$ and $(N_{LS})_{ij} = Q_{ij} = 0$ for each $(i,j) \in E_G$. Therefore, 
\begin{eqnarray*}
\langle Q, \hat{X}_{LS} \rangle & = & \langle D_{LS} Q D_{LS}, D_{LS}^{-1} \hat{X}_{LS} D_{LS}^{-1} \rangle \\
& = & \langle D_{LS} Q D_{LS}, X^*_{LS} \rangle \\
& = & \frac{1}{\vartheta^\prime(\overbar{G},w)} \langle I + N_{LS}, X^*_{LS} \rangle \\
& = & \frac{1}{\vartheta^\prime(\overbar{G},w)} \left(1 + \langle N_{LS}, X^*_{LS} \rangle\right),
\end{eqnarray*}
where we used $\langle I,  X^*_{LS} \rangle = 1$ in the last line. Since $Q_{ij} = (N_{LS})_{ij} = 0$ for each $(i,j) \in E_G$ and $(X^*_{LS})_{ij} = 0$ for each $(i,j) \in E_{\overbar{G}}$, it follows that $\langle N_{LS}, X^*_{LS} \rangle = 0$, which implies that $\langle Q, \hat{X}_{LS} \rangle = 1/\vartheta^\prime(\overbar{G},w)$. Therefore, $\ell(Q) \leq 1/\vartheta^\prime(\overbar{G},w)$ since $\hat{X}_{LS}$ is a feasible solution of (DN-P).

Conversely, let $X^*_{DN} \in {\cal DN}^n$ be an optimal solution of (DN-P). Then, $\langle E, X^*_{DN} \rangle = 1$ and $\ell(Q) = \langle Q , X^*_{DN} \rangle > 0$ since $Q \in \cN^n$ with strictly positive diagonal entries. By Lemma~\ref{DNN_optimal_exist}, we can assume that $(X^*_{DN})_{ij} = 0$ for each $(i,j) \in E_{\overbar{G}}$. Let us define another diagonal matrix $D_{DN} \in \cS^n$ given by
\begin{equation} \label{def_D_DN}
D_{DN} = \frac{1}{\sqrt{\ell(Q)}} \begin{bmatrix} \frac{1}{\sqrt{w_1}} & & \\ & \ddots & \\ & & \frac{1}{\sqrt{w_n}} \end{bmatrix}.
\end{equation}
Let $\hat{X}_{DN} = D_{DN} X^*_{DN} D_{DN}$. Once again, by Lemma~\ref{lem_gen_rels} (iii), $\hat{X}_{DN} \in {\cal DN}^n$. Similarly, by \eqref{def_M_class}, for any $Q \in {\cal M}(G,w)$, 
\[
D_{DN}^{-1} Q D_{DN}^{-1} = \ell(Q) \left(I + N_{DN} \right),
\]
where $N_{DN} \in \cN^n$ and $(N_{DN})_{ij} = Q_{ij} = 0$ for each $(i,j) \in E_G$. Therefore, 
\[
\ell(Q) = \langle Q , X^*_{DN} \rangle = \langle D_{DN}^{-1} Q D_{DN}^{-1}, \hat{X}_{DN} \rangle = \ell(Q) \left(\langle I, \hat{X}_{DN} \rangle + \langle N_{DN}, \hat{X}_{DN} \rangle \right).
\]
Note that $\langle N_{DN}, \hat{X}_{DN} \rangle = 0$ since $(N_{DN})_{ij} = Q_{ij} = 0$ for each $(i,j) \in E_G$ and $(\hat{X}_{DN})_{ij} = (X^*_{DN})_{ij} = 0$ for each $(i,j) \in E_{\overbar{G}}$. It follows that $\langle I, \hat{X}_{DN} \rangle = 1$, which, combined with the previous observation, implies that $\hat{X}_{DN}$ is feasible for \eqref{w_lov_sch}. 

Finally, we have
\[
\langle W, \hat{X}_{DN} \rangle = \langle D_{DN} W D_{DN}, X^*_{DN} \rangle = \frac{1}{\ell(Q)} \langle E, X^*_{DN} \rangle = \frac{1}{\ell(Q)},
\]
which implies that $\vartheta^\prime(\overbar{G},w) \geq 1/\ell(Q)$, establishing the reverse inequality. The relation \eqref{M_class_ell} follows.

 \end{proof}
 
 For a given graph $G = (V_G,E_G)$, consider the unweighted case, i.e., let $w = e \in \R^n$ and let $Q = I + A_{\overbar{G}} \in {\cal M}(G,e)$, where $A_{\overbar{G}} \in \cS^n$ is the vertex adjacency matrix of $\overbar{G}$. We remark that the identity $\ell(Q) = 1/\vartheta^\prime(\overbar{G},e)$ is a consequence of Corollary 2.4 and Lemma 5.2 in~\cite{de2002approximation}. It follows that Proposition~\ref{dnn_char_M} generalizes this identity to the weighted case and to any $Q \in {\cal M}(G,w)$.
 
 We now have all the ingredients to establish our main result in this section. Let us first introduce the set of all perfect graphs on the set of vertices $\{1,2,\ldots,n\}$, i.e.,
 \begin{equation}
\mathfrak{G} = \left\{G = (V_G,E_G): V_G = \{1,2,\ldots,n\}, \quad \textrm{$G$ is a perfect graph}\right\}.     
 \end{equation}
 
We next define the following set.
\begin{equation} \label{def_cM}
\cM = \bigcup_{G \in \mathfrak{G}} \bigcup_{w \in \R^n_{++}} {\cal M}(G,w).     
\end{equation}
 
For each perfect graph $G = (V_G,E_G) \in \mathfrak{G}$ and each  $w \in \R^n_{++}$, note that ${\cal M}(G,w)$ is a polyhedral set. Therefore, $\cM$ is given by the union of an infinite number of polyhedral sets.

Finally, we define
\begin{equation} \label{def_Q3}
\cQ^n_3 = \cM  + \cL,
\end{equation}
where $\cL$ is given by \eqref{def_L}. We next present our main result.

\begin{proposition} \label{Q3_subset_Q}
The following relation holds:
\begin{equation} \label{Q3_in_Q}
\cQ^n_3 \subseteq \cQ^n,    
\end{equation}
where $\cQ^n_3$ and $\cQ^n$ are defined as in \eqref{def_Q3} and \eqref{def_Q}, respectively.
\end{proposition}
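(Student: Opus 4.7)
The plan is to reduce the claim to a direct application of the ingredients already assembled in this section, namely Theorem~\ref{mwss_stqp}, Proposition~\ref{dnn_char_M}, the identity~\eqref{lov_sch_per}, and the shift-invariance property of Lemma~\ref{shift_invariance}. No new construction should be required.

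First I would take an arbitrary $Q \in \cQ^n_3$ and, by \eqref{def_Q3} and \eqref{def_cM}, decompose it as $Q = B + \lambda E$, where $\lambda \in \R$, and $B \in {\cal M}(G,w)$ for some perfect graph $G = (V_G,E_G) \in \mathfrak{G}$ and some $w \in \R^n_{++}$. The target $\ell(Q) = \nu(Q)$ then reduces, by Lemma~\ref{shift_invariance}, to showing $\ell(B) = \nu(B)$, since both $\nu$ and $\ell$ shift by $\lambda$ under addition of $\lambda E$.

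Next I would evaluate both sides on $B$ using the tools already proved. Theorem~\ref{mwss_stqp} gives $\nu(B) = 1/\omega(G,w)$, and Proposition~\ref{dnn_char_M} gives $\ell(B) = 1/\vartheta^\prime(\overbar{G},w)$. Since $G \in \mathfrak{G}$ is a perfect graph, relation~\eqref{lov_sch_per} yields $\omega(G,w) = \vartheta^\prime(\overbar{G},w)$, so $\nu(B) = \ell(B)$, i.e.\ $B \in \cQ^n$. Combining with the shift step, $Q \in \cQ^n$, which gives the desired inclusion~\eqref{Q3_in_Q}.

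There is no real obstacle here: the non-trivial work has already been done in Proposition~\ref{dnn_char_M} (the matching of $\ell(Q)$ with the Schrijver strengthening $\vartheta^\prime$) and in the classical identity $\omega(G,w) = \vartheta^\prime(\overbar{G},w)$ for perfect graphs. The only care needed is to make sure the argument is independent of which representation $Q = B + \lambda E$ is chosen, which is automatic from the shift invariance and the fact that $\cL$ acts on both $\nu$ and $\ell$ identically.
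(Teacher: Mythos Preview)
Your proposal is correct and follows essentially the same argument as the paper: decompose $Q = B + \lambda E$ with $B \in {\cal M}(G,w)$ for a perfect graph $G$, combine Theorem~\ref{mwss_stqp}, Proposition~\ref{dnn_char_M}, and \eqref{lov_sch_per} to get $\ell(B) = \nu(B)$, then invoke Lemma~\ref{shift_invariance}. The paper's proof is virtually identical, differing only in notation ($\widehat{Q}$ in place of your $B$).
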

\begin{proof}
Let $Q \in \cQ^n_3$. Then, there exist a perfect graph $G = (V_G,E_G) \in \mathfrak{G}$ and $w \in \R^n_{++}$ such that $Q = \widehat{Q} + \lambda E$ for some $\widehat{Q} \in {\cal M}(G,w)$ and $\lambda \in \R$. Since $G$ is a perfect graph and $\widehat{Q} \in {\cal M}(G,w)$, it follows from Theorem~\ref{mwss_stqp}, the relation \eqref{lov_sch_per}, and Proposition~\ref{dnn_char_M} that $\ell(\widehat{Q}) = \nu(\widehat{Q})$, i.e., $\widehat{Q} \in \cQ^n$. The inclusion \eqref{Q3_in_Q} directly follows from Lemma~\ref{shift_invariance}.
\end{proof}
 
We close this section by noting that the membership problem in $\cQ^n_3$ can, in theory, be solved in polynomial time. Given $Q \in \cS^n$, let $G = (V_G,E_G)$, where $V_G = \{1,\ldots,n\}$ and 
\begin{equation} \label{def_EG_by_Q}
E_G = \left\{(i,j): 1 \leq i < j \leq n, \quad 2 Q_{ij} < Q_{ii} + Q_{jj} \right\}.
\end{equation}
There are two cases. If $E_G = \emptyset$, then let $\gamma = \min\limits_{1 \leq i \leq j \leq n} Q_{ij} - 1$ and define $\widehat{Q} = Q - \gamma E \in \cN^n$. Then, $\widehat{Q}$ has strictly positive entries and $2 \widehat{Q}_{ij} \geq \widehat{Q}_{ii} + \widehat{Q}_{jj}$ for each $1 \leq i < j \leq n$ by \eqref{def_EG_by_Q}. By defining $w \in \R^n_{++}$ with $w_k = 1/\widehat{Q}_{kk} > 0,~ k = 1,\ldots,n$, it follows that $\widehat{Q} \in {\cal M}(G,w)$ and $G$ is clearly perfect since it contains no edges. Therefore, $Q = \widehat{Q} + \gamma E \in \cQ^n_3$. It is worth noticing that any such matrix $Q$ also belongs to $\cQ^n_1$, where $\cQ^n_1$ is given by \eqref{def_cQ1}.

Suppose, on the other hand, that $E_G \neq \emptyset$. We first observe that, by \eqref{def_Q3}, a necessary condition for $Q \in \cQ^n_3$ is given by $Q_{ij} = \alpha$ for each $(i,j) \in E_G$, where $\alpha \in \R$. Therefore, let $\kappa_1 = \min\limits_{(i,j) \in E_G} Q_{ij}$ and $\kappa_2 = \max\limits_{(i,j) \in E_G} Q_{ij}$. If $\kappa_1 < \kappa_2$, then $Q \not \in \cQ^n_3$ by the previous necessary condition. Otherwise, let $\kappa = \kappa_1 = \kappa_2$ and $\widehat{Q} = Q - \kappa E$, Note that $\widehat{Q}_{ij} = 0$ for each $(i,j) \in E_G$ and $\widehat{Q}_{ij} \ge \widehat{Q}_{ii} + \widehat{Q}_{jj}$ for each $(i,j) \in E_{\overbar{G}}$. If $\widehat{Q}$ has strictly positive diagonal entries, then we can ensure that $\widehat{Q} \in {\cal M}(G,w)$ by similarly defining $w \in \R^n_{++}$ with $w_k = 1/Q^s_{kk} > 0,~ k = 1,\ldots,n$. Then, one can check in polynomial time if $G = (V_G,E_G)$ is a perfect graph~\cite{CCLSV05} and accordingly decide if $Q \in \cQ^n_3$. Finally, if $\widehat{Q} = Q - \kappa E$ does not have strictly positive diagonal entries, then $Q \not \in \cQ^n_3$. In the latter case, note, however, that $Q \in \cQ^n_1$, and therefore $Q \in \cQ^n$ by Proposition~\ref{Q1subsetQ}.

Conversely, for any perfect graph $G = (V_G,E_G)$ and any $w \in \R^n_{++}$, choosing any matrix $\widehat{Q} \in {\cal M}(G,w)$ and any $\lambda \in \R$, and defining $Q = \widehat{Q} + \lambda E$, we ensure that $Q \in \cQ^n$ by Proposition~\ref{Q3_subset_Q}.

\subsection{Relations Among Three Families}

In Sections~\ref{min_diag}, \ref{psd_on_e_perp}, and \ref{max_w_clique}, we have explicitly identified three families of instances of (StQP) that admit exact doubly nonnegative relaxations. In this section, we present numerical examples illustrating that neither of these subsets is contained in any of the other two subsets. We also present an example that shows the existence of an instance that belongs to $\cQ^n$ but is not contained in any of the three families.

\begin{example} \label{ex1}
Let
\[
Q  = \begin{bmatrix} 0 & 1 & 3 & 2 & 0\\
1 & 3 & 1 & 3 & 2 \\
3 & 1 & 2 & 1 & 3 \\
2 & 3 & 1 & 1 & 0 \\
0 & 2 & 3 & 0 & 1
\end{bmatrix}.
\]
Observe that $\min\limits_{1 \leq i \leq j \leq n}Q_{ij} = Q_{11} = 0$, which implies that $Q \in \cQ^5_1$ by \eqref{def_cQ1} and $Q \in \cQ^5$ by Proposition~\ref{Q1subsetQ}. Indeed, we have $\Omega(Q) = \{e_1\}$ and $\nu(Q) = \ell(Q) =  0$. 

Let $d = [4,-1,-1,-1,-1]^T \in \R^5$. Note that $e^T d = 0$. However, $d^T Q d = -21 < 0$, which implies that $Q \not \in \cQ^5_2$ by \eqref{def_cQ2}. 

Finally, by \eqref{def_EG_by_Q}, 
\[
E_G = \left\{(1,2),(1,5),(2,3),(3,4),(4,5)\right\}.
\]
Since $E_G \neq \emptyset$, we have $\kappa_1 = \min\limits_{(i,j) \in E_G} Q_{ij} = Q_{15} = 0 < \kappa_2 = \max\limits_{(i,j) \in E_G} Q_{ij} = Q_{12} = 1$, which implies that $Q \not \in \cQ^5_3$ by the discussion at the end of Section~\ref{max_w_clique}. It follows that $Q \in \cQ^5_1 \backslash \left(\cQ^5_2 \cup \cQ^5_3\right)$.
\end{example} 

\begin{example} \label{ex2}
Let
\[
Q = \begin{bmatrix} 2 & 0 & 0 & 0 & 0\\
0 & 2 & 1 & 0 & 0 \\
0 & 1 & 1 & 0 & 0 \\
0 & 0 & 0 & 1 & 1 \\
0 & 0 & 0 & 1 & 1
\end{bmatrix}.
\]
Note that $Q \in {\cal PSD}^5$, which implies that $Q \in \cQ^5_2$ by \eqref{def_cQ2} and $Q \in \cQ^5$ by Proposition~\ref{Q2subsetQ}. An optimal solution is given by $x^* = [0.2,0,0.4,0.4,0]^T$ and $\ell(Q) = \nu(Q) = 0.4$. 

Observe that $\min\limits_{1 \leq i \leq j \leq n}Q_{ij} = Q_{12} = 0 < \min\limits_{k = 1,\ldots,n} Q_{kk} = Q_{22} = 1$, which implies that $Q \not \in \cQ^5_1$ by \eqref{def_cQ1}. 

Finally, by \eqref{def_EG_by_Q}, 
\[
E_G = \left\{(1,2),(1,3),(1,4),(1,5),(2,3),(2,4),(2,5),(3,4),(3,5) \right\}.
\]
Since $E_G \neq \emptyset$, we have $\kappa_1 = \min\limits_{(i,j) \in E_G} Q_{ij} = Q_{12} = 0 < \kappa_2 = \max\limits_{(i,j) \in E_G} Q_{ij} = Q_{23} = 1$, which implies that $Q \not \in \cQ^5_3$ by the discussion at the end of Section~\ref{max_w_clique}. It follows that $Q \in \cQ^5_2 \backslash \left(\cQ^5_1 \cup \cQ^5_3\right)$.
\end{example} 

\begin{example} \label{ex3}
Let
\[
Q = \begin{bmatrix} 
1 & 1 & 1 & 1 & 1\\
1 & 1 & 1 & 1 & 1 \\
1 & 1 & 1 & 1 & 1 \\
1 & 1 & 1 & 1 & 0 \\
1 & 1 & 1 & 0 & 1
\end{bmatrix}.
\]
By \eqref{def_EG_by_Q}, 
\[
E_G = \left\{(4,5) \right\}.
\]
Since $E_G \neq \emptyset$, we have $\kappa_1 = \min\limits_{(i,j) \in E_G} Q_{ij} = Q_{45} = 0 = \kappa_2 = \max\limits_{(i,j) \in E_G} Q_{ij} = Q_{45} = 0 = \kappa$. Following the discussion at the end of Section~\ref{max_w_clique}, let $\widehat{Q} = Q - \kappa E = Q$. Clearly, the diagonal entries of $Q$ are all equal to 1. Therefore, we define $w = e \in \R^5$. It follows that $Q \in \cM(G,w)$, where 
$G = (V_G,E_G)$ and $V_G = \{1,2,3,4,5\}$. Note that neither $G$ nor $\overbar{G}$ contains an odd cycle of length at least five as an induced subgraph. It follows that $G$ is a perfect graph, which implies that $Q \in \cQ^5_3$ by \eqref{def_Q3}. By Theorem~\ref{mwss_stqp} and Proposition~\ref{dnn_char_M}, we obtain $\ell(Q) = \nu(Q) = 1/2$ and the unique optimal solution is given by $x^* = [0,0,0,0.5,0.5]^T$.

Observe that $\min\limits_{1 \leq i \leq j \leq n}Q_{ij} = Q_{45} = 0 < \min\limits_{k = 1,\ldots,n} Q_{kk} = Q_{11} = 1$, which implies that $Q \not \in \cQ^5_1$ by \eqref{def_cQ1}. 

Finally, let $d = [4,-1,-1,-1,-1]^T \in \R^5$. Note that $e^T d = 0$. However, $d^T Q d = -2 < 0$, which implies that $Q \not \in \cQ^5_2$ by \eqref{def_cQ2}. It follows that $Q \in \cQ^5_3 \backslash \left(\cQ^5_1 \cup \cQ^5_2\right)$.
\end{example}

As illustrated by Examples~\ref{ex1}, \ref{ex2}, and \ref{ex3}, each of the three sets $\cQ_1$, $\cQ_2$, and $\cQ_3$ may contain an element that does not belong to the other two. The final example in this section illustrates that there exist matrices for which the corresponding (StQP) instance admits an exact doubly nonnegative relaxation but do not belong to any of the three sets $\cQ^n_1$, $\cQ^n_2$, and $\cQ^n_3$.

\begin{example} \label{ex4}
Let
\[
Q = \begin{bmatrix} 
2 & 2 & 2 & 2 & 2\\
2 & 2 & 2 & 2 & 2 \\
2 & 2 & 2 & 1 & 2 \\
2 & 2 & 1 & 2 & 0 \\
2 & 2 & 2 & 0 & 2
\end{bmatrix}.
\]
Note that $\min\limits_{1 \leq i \leq j \leq n}Q_{ij} = Q_{45} = 0 < \min\limits_{k = 1,\ldots,n} Q_{kk} = Q_{11} = 2$, which implies that $Q \not \in \cQ^5_1$ by \eqref{def_cQ1}.

Let $d = [4,-1,-1,-1,-1]^T \in \R^5$. Note that $e^T d = 0$. However, $d^T Q d = -6 < 0$, which implies that $Q \not \in \cQ^5_2$ by \eqref{def_cQ2}. 

By \eqref{def_EG_by_Q}, 
\[
E_G = \left\{(3,4),(4,5) \right\}.
\]
Since $E_G \neq \emptyset$, we have $\kappa_1 = \min\limits_{(i,j) \in E_G} Q_{ij} = Q_{45} = 0 < \kappa_2 = \max\limits_{(i,j) \in E_G} Q_{ij} = Q_{34} = 1$, which implies that $Q \not \in \cQ^5_3$ by the discussion at the end of Section~\ref{max_w_clique}. It follows that $Q \not \in  \left(\cQ^5_1 \cup \cQ^5_2 \cup \cQ^5_3\right)$.

On the other hand, an optimal solution of the corresponding instance of (StQP) is given by $x^* = [0,0,0,0.5,0.5]^T$, and $\nu(Q) = 1$. Finally,
\[
Q - \left( (x^*)^T Q x^* \right) E = \begin{bmatrix} 
1 & 1 & 1 & 1 & 1\\
1 & 1 & 1 & 1 & 1 \\
1 & 1 & 1 & 0 & 1 \\
1 & 1 & 0 & 1 & -1 \\
1 & 1 & 1 & -1 & 1
\end{bmatrix}
= \underbrace{\begin{bmatrix} 1 & 0 & 0 & 0 & 0 \\
0 & 1 & 0 & 0 & 0 \\
0 & 0 & 1 & 0 & 0 \\
0 & 0 & 0 & 1 & -1 \\
0 & 0 & 0 & -1 & 1
\end{bmatrix}}_{P}
+
\underbrace{\begin{bmatrix} 0 & 1 & 1 & 1 & 1 \\
1 & 0 & 1 & 1 & 1 \\
1 & 1 & 0 & 0 & 1 \\
1 & 1 & 0 & 0 & 0 \\
1 & 1 & 1 & 0 & 0
\end{bmatrix}}_{N},
\]
which implies that $Q - \left( (x^*)^T Q x^* \right) E \in {\cal SPN}^5$ since $P \in {\cal PSD}^5$ and $N \in {\cal N}^5$. Therefore, by Proposition~\ref{charac_1}, it follows that $Q \in \cQ_{x^*}$, i.e., $\ell(Q) = \nu(Q) = 1$, which implies that $Q \in \cQ^5$. We conclude that $Q \in \cQ^5 \backslash \left(\cQ^5_1 \cup \cQ^5_2 \cup \cQ^5_3\right)$.
\end{example}

\section{Relations with Maximal Cliques of the Convexity Graph} \label{max_clique_conv_graph}

In this section, we establish several relations between the tightness of the doubly nonnegative relaxation of an instance of (StQP) and the maximal cliques of the so-called convexity graph associated with the matrix $Q \in \cS^n$.

\subsection{Convexity Graph}

For a given $Q \in \cS^n$, one can define a simple undirected graph $G_Q = (V_Q,E_Q)$, referred to as the \emph{convexity graph} of $Q$, where the set of vertices is given by $V_Q = \{1,\ldots,n\}$ and the set of edges is defined as 
\begin{equation} \label{edge_def}
E_Q =  \{(i,j): 2Q_{ij} <  Q_{ii} + Q_{jj} , \quad 1 \le i < j \le n\}. 
\end{equation}

It is easy to verify the following shift invariance property of the convexity graph.
\begin{equation} \label{conv_graph_shift}
G_{Q + \lambda E} = G_{Q}, \quad \forall~ Q \in \cS^n, \quad \forall~\lambda \in \R. 
\end{equation}

Recall that a clique in a simple undirected graph is a set of mutually adjacent vertices. For a given $Q \in \cS^n$, the next result provides a useful connection between the cliques of the convexity graph $G_Q$ and the index set $A(x)$ of an optimal solution $x \in \Omega(Q)$ of the corresponding instance of (StQP), where $A(\cdot)$ is given by \eqref{def_P}.

\begin{theorem}[Scozzari and Tardella, 2008] \label{Scozzari}
Given $Q \in \cS^n$, there exists an optimal solution $x^* \in \Omega(Q)$ of the corresponding instance of (StQP) such that the vertices corresponding to $A(x^*)$ form a clique in the convexity graph $G_Q = (V_Q,E_Q)$, where $V_Q = \{1,\ldots,n\}$, and $A(\cdot)$ and $E_Q$ are given by \eqref{def_P} and \eqref{edge_def}, respectively.
\end{theorem}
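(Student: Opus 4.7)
The plan is to argue by taking an optimal solution with the smallest support and showing that its support must already form a clique, for otherwise we could slide along a feasible direction to shrink the support while staying optimal.

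First, I would choose $x^* \in \Omega(Q)$ that minimizes the cardinality $|A(x^*)|$ among all optimal solutions; such a minimizer exists because $\Omega(Q)$ is a nonempty compact set and the support size takes only finitely many values. Suppose for contradiction that the set $A := A(x^*)$ does not form a clique in $G_Q$. Then there exist $i \neq j$ in $A$ with $(i,j) \notin E_Q$, meaning $2 Q_{ij} \geq Q_{ii} + Q_{jj}$. Set $d := e_i - e_j$, so that $e^T d = 0$ and
\[
d^T Q d = Q_{ii} - 2 Q_{ij} + Q_{jj} \leq 0.
\]

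Next, I would use the KKT conditions \eqref{eq1}--\eqref{eq5} satisfied at $x^*$: there exists $s \in \R^n_+$ with $Q x^* - \nu(Q) e - s = 0$ and $x^*_k s_k = 0$ for each $k$. Since $i, j \in A$, complementarity forces $s_i = s_j = 0$, hence $(Qx^*)_i = (Qx^*)_j = \nu(Q)$ and consequently $d^T Q x^* = 0$. Therefore, for every $t \in \R$,
\[
(x^* + t d)^T Q (x^* + t d) = \nu(Q) + 2 t \, d^T Q x^* + t^2 d^T Q d = \nu(Q) + t^2 d^T Q d.
\]
If $d^T Q d < 0$, then for any $t \neq 0$ small enough so that $x^* + t d \in \Delta_n$ (which is possible since $x^*_i, x^*_j > 0$) the objective drops strictly below $\nu(Q)$, contradicting optimality. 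Hence $d^T Q d = 0$, and the objective is constant along the line $t \mapsto x^* + t d$.

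I would then move as far as possible along this line inside the simplex: let $t^* = \min\{x^*_j, x^*_i\}$ taken with the appropriate sign to drive one of the two coordinates to zero, and set $\tilde x = x^* + t^* d$ (or $\tilde x = x^* - t^* d$, whichever exits the simplex first). Then $\tilde x \in \Delta_n$, $\tilde x^T Q \tilde x = \nu(Q)$ so $\tilde x \in \Omega(Q)$, and $|A(\tilde x)| \leq |A(x^*)| - 1$, contradicting the choice of $x^*$. This contradiction forces $A(x^*)$ to be a clique in $G_Q$, proving the claim.

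The only mildly delicate step is the existence of a minimum-support optimizer and verifying that after the feasibility-preserving translation exactly one of the coordinates $x^*_i, x^*_j$ vanishes while all other coordinates remain nonnegative; but since the perturbation $t d$ only affects entries $i$ and $j$ and $e^T d = 0$, feasibility and the strict reduction in support size are automatic. No substantial obstacle is expected, since the argument is a direct application of the KKT conditions together with the definition of $E_Q$.
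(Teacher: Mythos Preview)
The paper does not give its own proof of this theorem: it is quoted as a result of Scozzari and Tardella (2008) and used as a black box. So there is no in-paper argument to compare against.

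That said, your proof is correct and is essentially the standard argument behind the Scozzari--Tardella result. Selecting a minimum-support optimizer, using complementarity \eqref{eq5} to obtain $d^T Q x^* = 0$ for $d = e_i - e_j$, and then exploiting $d^T Q d \le 0$ (from $(i,j)\notin E_Q$) to either contradict optimality or slide to a strictly smaller support is exactly the mechanism at play. One tiny cosmetic point: your phrasing ``let $t^* = \min\{x^*_j, x^*_i\}$ taken with the appropriate sign'' is slightly imprecise; it is cleaner to set $t^* = x^*_j$ (so $\tilde x = x^* + t^* d$ kills coordinate $j$) or $t^* = -x^*_i$ (killing coordinate $i$), since along the direction $d=e_i-e_j$ increasing $t$ decreases only $x_j$ and decreasing $t$ decreases only $x_i$. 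Either choice works and the rest of the simplex constraints are untouched, so your support-reduction step goes through without issue.
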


Let $w \in \R^n_{++}$ and let $G = (V_G,E_G)$ be a graph with $V_G = \{1,\ldots,n\}$. Note that, for any $Q \in \cM(G,w)$, where $\cM(G,w)$ is given by \eqref{def_M_class}, the convexity graph of $Q$ is given by $G_Q = G$. Therefore, by Theorem~\ref{Scozzari}, the corresponding (StQP) instance has an optimal solution $x^* \in \Omega(Q)$ such that $A(x^*)$ induces a clique in $G$. Indeed, for any maximum weight clique $C \subseteq V_Q $, an optimal solution of the corresponding (StQP) presented in  Theorem~\ref{mwss_stqp} is given by (see, e.g.,~\cite{ref:Gibbons1997})
\[
x^*_j = \begin{cases} \frac{w_j}{w(C)}, & \textrm{if}~j \in C, \\
0 & \textrm{otherwise,}
\end{cases} 
\]
where $w(C) = \sum\limits_{j \in C} w_j$. Note that $A(x^*) = C$, which is a clique in $G_Q$.

\subsection{Maximal Cliques of the Convexity Graph}
 
For a given simple undirected graph $G = (V,E)$, a clique $C \subseteq V$ is said to be \emph{maximal} if it is not a proper subset of a larger clique in $G$. For a given $Q \in \cS^n$, the following lemma establishes useful relations between $\ell(Q)$, $\nu(Q)$, and the maximal cliques of the convexity graph. 

\begin{lemma} \label{max_cliques}
For a given $Q \in \cS^n$, let $G_Q = (V_Q,E_Q)$ denote the convexity graph of $Q$ and let $\mathfrak{C}$ denote the collection of all maximal cliques of $G_Q$. Then, 
\begin{equation} \label{max_clique_rels}
\ell(Q) \leq \min\limits_{C \in \mathfrak{C}} \ell(Q_{CC}) \leq \min\limits_{C \in \mathfrak{C}} \nu(Q_{CC}) = \nu(Q).
\end{equation}
Furthermore, each of the two inequalities is satisfied as an equality if and only if $Q \in \cQ^n$, where $\cQ^n$ is given by \eqref{def_Q}.
\end{lemma}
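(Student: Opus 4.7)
The plan is to prove the chain~\eqref{max_clique_rels} in three pieces and then read off the iff statement. The rightmost equality $\min_{C \in \mathfrak{C}} \nu(Q_{CC}) = \nu(Q)$ will be the most substantive step, and it is where Theorem~\ref{Scozzari} does the real work. For the inequality $\nu(Q) \leq \nu(Q_{CC})$ valid for every clique $C$, I would take any $y \in \Delta_{|C|}$ and extend it to $x \in \Delta_n$ by setting $x_j = y_j$ for $j \in C$ and $x_j = 0$ otherwise; then $x \in \Delta_n$ and $x^T Q x = y^T Q_{CC} y$, which yields $\nu(Q) \leq \nu(Q_{CC})$ and hence $\nu(Q) \leq \min_{C \in \mathfrak{C}} \nu(Q_{CC})$. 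For the reverse direction, I would invoke Theorem~\ref{Scozzari} to obtain $x^* \in \Omega(Q)$ with $A(x^*)$ a clique in $G_Q$, enlarge $A(x^*)$ to a maximal clique $C^* \in \mathfrak{C}$ (which is possible since $G_Q$ is finite), and observe that the restriction $y^* := x^*_{C^*}$ belongs to $\Delta_{|C^*|}$ (because $A(x^*) \subseteq C^*$) and satisfies $(y^*)^T Q_{C^*C^*} y^* = (x^*)^T Q x^* = \nu(Q)$, giving $\min_{C \in \mathfrak{C}} \nu(Q_{CC}) \leq \nu(Q_{C^*C^*}) \leq \nu(Q)$.

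The middle inequality $\min_{C \in \mathfrak{C}} \ell(Q_{CC}) \leq \min_{C \in \mathfrak{C}} \nu(Q_{CC})$ is immediate from applying~\eqref{lb_dnn} to each principal submatrix $Q_{CC}$ and then taking the minimum over $C \in \mathfrak{C}$.

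For the leftmost inequality $\ell(Q) \leq \min_{C \in \mathfrak{C}} \ell(Q_{CC})$, I would fix a maximal clique $C \in \mathfrak{C}$, take an optimal solution $X^* \in {\cal DN}^{|C|}$ of (DN-P) associated with $Q_{CC}$, and extend it to $\hat{X} \in \cS^n$ by placing $X^*$ in the principal block indexed by $C$ and zeros everywhere else. Lemma~\ref{lem_gen_rels}(ii) and (v) (with $U_2 = 0$) guarantee $\hat{X} \in {\cal DN}^n$, while a direct block computation shows $\langle E, \hat{X} \rangle = \langle E, X^* \rangle = 1$ and $\langle Q, \hat{X} \rangle = \langle Q_{CC}, X^* \rangle = \ell(Q_{CC})$. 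Hence $\hat{X}$ is feasible for the (DN-P) of $Q$ with objective value $\ell(Q_{CC})$, so $\ell(Q) \leq \ell(Q_{CC})$, and the desired bound follows by minimizing over $C$.

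Finally, the iff statement drops out at once: combining the three parts gives $\ell(Q) \leq \min_{C \in \mathfrak{C}} \ell(Q_{CC}) \leq \min_{C \in \mathfrak{C}} \nu(Q_{CC}) = \nu(Q)$, so both inequalities simultaneously collapse to equalities if and only if $\ell(Q) = \nu(Q)$, which by definition~\eqref{def_Q} is exactly the statement $Q \in \cQ^n$. The only step that is not a routine extension/restriction argument is the use of Theorem~\ref{Scozzari} to align an optimal StQP solution with a clique of $G_Q$; once this is in hand, the rest is bookkeeping, so I do not anticipate a serious technical obstacle.
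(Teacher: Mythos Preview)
Your proposal is correct and mirrors the paper's proof essentially step for step: the same extension of a (DN-P) optimizer via Lemma~\ref{lem_gen_rels}(ii),(v) for the first inequality, the same pointwise use of~\eqref{lb_dnn} for the middle one, and the same appeal to Theorem~\ref{Scozzari} plus enlargement to a maximal clique for the equality. The iff statement is read off exactly as you describe.
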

\begin{proof}
Consider the first inequality in~\eqref{max_clique_rels}. For any maximal clique $C \in \mathfrak{C}$, consider any optimal solution $X^* \in {\cal DN}^{|C|}$ of (DN-P) corresponding to $Q_{CC} \in \cS^{|C|}$. By Lemma~\ref{lem_gen_rels} (ii) and (v), $X^*$ can be extended to a solution $\widehat{X} \in \cS^n$ by defining $\widehat{X}_{CC} = X^*$ and $\widehat{X}_{ij} = 0$ if $i \not \in C$ or $j \not \in C$. It follows that $\widehat{X} \in {\cal DN}^n$ is a feasible solution of (DN-P) corresponding to $Q$ and
\[
\ell(Q) \leq \langle Q, \widehat{X} \rangle = \langle Q_{CC}, X^* \rangle = \ell(Q_{CC}),
\]
which establishes the first inequality in~\eqref{max_clique_rels}.

The second inequality in~\eqref{max_clique_rels} immediately follows from~\eqref{lb_dnn}.

Consider now the last equality in~\eqref{max_clique_rels}. For any maximal clique $C \in \mathfrak{C}$, we have
\[
\nu(Q_{CC}) = \min\limits_{w \in \Delta_{|C|}} w^T Q_{CC} \, w = \min\limits_{x \in \Delta_n}\left\{x^T Q x: x_j = 0,~j \not \in C \right\} \geq \nu(Q),
\]
which implies that $\nu(Q) \leq \min\limits_{C \in \mathfrak{C}} \nu(Q_{CC})$.

By Theorem~\ref{Scozzari}, there exists an $x^* \in \Omega(Q)$ such that the subgraph of $G_Q$ induced by $A(x^*)$ is a clique. Let $\widehat{C} \in \mathfrak{C}$ denote any maximal clique of $G(Q)$ such that $A(x^*) \subseteq \widehat{C}$. Since $e^T x^* = e_{\widehat{C}}^T x^*_{\widehat{C}} = 1$, it follows that
\[
 \nu(Q) = (x^*)^T Q x^* = \left(x^*_{\widehat{C}} \right)^T Q_{\widehat{C}\widehat{C}} \left(x^*_{\widehat{C}} \right) \geq \nu(Q_{\widehat{C}\widehat{C}}) \geq \min\limits_{C \in \mathfrak{C}} \nu(Q_{CC}),
\]
which establishes the reverse inequality. Therefore, $\nu(Q) = \min\limits_{C \in \mathfrak{C}} \nu(Q_{CC})$.

The last assertion immediately follows from \eqref{def_Q}.
\end{proof}

Next, we present two examples illustrating that each of the two inequalities in~\eqref{max_clique_rels} can be strict. 

\begin{example}\label{ex5}
Let
\[
Q = \begin{bmatrix} 
1 & 0 & 0.9 & 0.9 & 0 \\
0 & 1 & 0 & 0.9 & 0.9  \\
0.9 & 0 & 1 & 0 & 0.9  \\
0.9 & 0.9 & 0 & 1 & 0  \\
0 & 0.9 & 0.9 & 0 & 1 
\end{bmatrix}.
\]
The convexity graph $G_Q = (V_Q,E_Q)$ is given by
\begin{center}
\begin{tikzpicture}
\node[vertex] (4) at (0,0) {4};
\node[vertex] (3) at (1,0) {3};
\node[vertex] (2) at (1.5,1) {2};
\node[vertex] (1) at (0.5,2) {1};
\node[vertex] (5) at (-0.5,1) {5};
\path[color=black]
(1) edge (2)
(2) edge (3)
(3) edge (4)
(4) edge (5)
(1) edge (3)
(5) edge (1)
(2) edge (4)
(3) edge (5)
(2) edge (5)
(1) edge (4);
\end{tikzpicture}
\end{center}
Since $G_Q$ is a complete graph, the only maximal clique in $G_Q$ is $C_1 = \{1,2,3,4,5\}$, i.e., $\mathfrak{C} = \{C_1\}$. In this example, 
\[
0.4472 \approx \ell(Q) = \min\limits_{C \in \mathfrak{C}} \ell(Q_{CC}) = \ell(Q_{C_1 C_1}) < \min\limits_{C \in \mathfrak{C}} \nu(Q_{CC}) = \nu(Q_{C_1 C_1}) = \nu(Q) \approx 0.4872, 
\]
which implies that the first inequality in \eqref{max_clique_rels} is satisfied with equality, whereas the second inequality is strict.
\end{example}

\begin{example}
 \label{ex6}
Let
\[
Q = \begin{bmatrix} 
1 & 0 & 0.9 & 1 & 0\\
0 & 1 & 0 & 1 & 1 \\
0.9 & 0 & 1 & 0 & 1 \\
1 & 1 & 0 & 1 & 0 \\
0 & 1 & 1 & 0 & 1
\end{bmatrix}.
\]
The convexity graph $G_Q = (V_Q,E_Q)$ is given by
\begin{center}
\begin{tikzpicture}
\node[vertex] (4) at (0,0) {4};
\node[vertex] (3) at (1,0) {3};
\node[vertex] (2) at (1.5,1) {2};
\node[vertex] (1) at (0.5,2) {1};
\node[vertex] (5) at (-0.5,1) {5};
\path[color=black]
(1) edge (2)
(2) edge (3)
(3) edge (4)
(4) edge (5)
(1) edge (3)
(5) edge (1);
\end{tikzpicture}
\end{center}
Therefore, $\mathfrak{C} = \{C_1,C_2,C_3,C_4,C_5\}$, where 
\[
C_1 = \{1,2,3\}, \quad C_2 = \{1,5\}, \quad C_3 =\{3,4\}, \quad C_4 = \{4,5\}.
\]
In this example, 
\[
0.4472 \approx \ell(Q) < \min\limits_{C \in \mathfrak{C}} \ell(Q_{CC}) = \ell(Q_{C_1 C_1}) = \min\limits_{C \in \mathfrak{C}} \nu(Q_{CC}) = \nu(Q_{C_1 C_1}) = \nu(Q) \approx 0.4872, 
\]
which implies that the first inequality in \eqref{max_clique_rels} is strict, whereas the second inequality is satisfied with equality.
\end{example}

By Lemma~\ref{max_cliques}, unless $G_Q$ is a complete graph, an instance of (StQP) can be decomposed into smaller instances of (StQP) each of which corresponds to a maximal clique of the convexity graph $G_Q$ (see Example~\ref{ex6}). In particular, if $G_Q$ has several connected components, then the problem naturally decomposes into subproblems corresponding to each connected component. Furthermore, the lower bound $\ell(Q)$ can be improved if one focuses instead on the principal submatrices of $Q$ corresponding to maximal cliques of the convexity graph $G_Q$ (see Example~\ref{ex6}). We remark that, in the worst case, a graph with $n$ vertices may have as many as $3^{n/3}$ of maximal cliques~\cite{moon1965cliques}. On the other hand, several classes of graphs including planar and chordal graphs have a polynomial number of maximal cliques (see, e.g.,~\cite{RosgenS07} and the references therein). Therefore, on instances of (StQP) with such a convexity graph, Lemma~\ref{max_cliques} implies that the original problem can be decomposed into a polynomial number of smaller problems and that the lower bound can potentially be improved by focusing only on the doubly nonnegative relaxations of the smaller problems corresponding to maximal cliques of $G_Q$.  

The next result characterizes the set of instances of (StQP) for which the second inequality in \eqref{max_clique_rels} is satisfied with equality. 

\begin{lemma} \label{min_cliq_DN_exact}
For a given $Q \in \cS^n$, let $G_Q = (V_Q,E_Q)$ denote the convexity graph of $Q$ and let $\mathfrak{C}$ denote the collection of all maximal cliques of $G_Q$. Then, 
\begin{equation} \label{dnn_sub_exact}
\min\limits_{C \in \mathfrak{C}} \ell(Q_{CC}) = \nu(Q)
\end{equation}
if and only if there exists $C^* \in \mathfrak{C}$ such that
\begin{equation} \label{dnn_sub_exact_2}
Q_{C^* C^*} \in \cQ^{|C^*|}, \quad \textrm{and} \quad \min\limits_{C \in \mathfrak{C}} \ell(Q_{CC}) = \ell(Q_{C^* C^*}),
\end{equation}
where $\cQ^n$ is given by \eqref{def_Q}.
\end{lemma}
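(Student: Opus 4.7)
The plan is to prove the two directions by leveraging Lemma~\ref{max_cliques}, which provides the sandwich
$\ell(Q)\le \min_{C\in\mathfrak{C}}\ell(Q_{CC}) \le \min_{C\in\mathfrak{C}}\nu(Q_{CC})=\nu(Q)$, and pairing it with Theorem~\ref{Scozzari}. Both directions reduce to sandwich arguments along this chain, so no new tools are required beyond the preceding material.

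For the easier $(\Leftarrow)$ direction, I would assume a $C^{*}\in\mathfrak{C}$ exists with $Q_{C^{*}C^{*}}\in\cQ^{|C^{*}|}$ and $\ell(Q_{C^{*}C^{*}})=\min_{C\in\mathfrak{C}}\ell(Q_{CC})$. Membership in $\cQ^{|C^{*}|}$ gives $\ell(Q_{C^{*}C^{*}})=\nu(Q_{C^{*}C^{*}})$. Then Lemma~\ref{max_cliques} applied to the principal submatrix together with its final equality $\min_{C}\nu(Q_{CC})=\nu(Q)$ yields
\[
\nu(Q)=\min_{C\in\mathfrak{C}}\nu(Q_{CC})\le\nu(Q_{C^{*}C^{*}})=\ell(Q_{C^{*}C^{*}})=\min_{C\in\mathfrak{C}}\ell(Q_{CC})\le\nu(Q),
\]
and the outer equality $\min_{C\in\mathfrak{C}}\ell(Q_{CC})=\nu(Q)$ follows immediately.

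For the harder $(\Rightarrow)$ direction, the main obstacle is that merely picking an arbitrary minimizer $C^{*}$ of $\ell(Q_{CC})$ over $\mathfrak{C}$ need not satisfy $\nu(Q_{C^{*}C^{*}})=\ell(Q_{C^{*}C^{*}})$; a priori one only knows $\nu(Q_{C^{*}C^{*}})\ge\nu(Q)$, which is the wrong inequality. The key is to use Theorem~\ref{Scozzari} to choose $C^{*}$ cleverly: pick $x^{*}\in\Omega(Q)$ with $A(x^{*})$ inducing a clique in $G_{Q}$, and let $C^{*}\in\mathfrak{C}$ be any maximal clique with $A(x^{*})\subseteq C^{*}$. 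Since $e_{C^{*}}^{T}x^{*}_{C^{*}}=e^{T}x^{*}=1$, the restriction $x^{*}_{C^{*}}$ is feasible for the (StQP) instance corresponding to $Q_{C^{*}C^{*}}$, giving $\nu(Q_{C^{*}C^{*}})\le (x^{*}_{C^{*}})^{T}Q_{C^{*}C^{*}}x^{*}_{C^{*}}=(x^{*})^{T}Qx^{*}=\nu(Q)$, and Lemma~\ref{max_cliques} forces $\nu(Q_{C^{*}C^{*}})=\nu(Q)$.

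With this choice of $C^{*}$ in hand, combining the hypothesis $\min_{C}\ell(Q_{CC})=\nu(Q)$ with Lemma~\ref{max_cliques} produces the sandwich
\[
\ell(Q_{C^{*}C^{*}})\le\nu(Q_{C^{*}C^{*}})=\nu(Q)=\min_{C\in\mathfrak{C}}\ell(Q_{CC})\le\ell(Q_{C^{*}C^{*}}),
\]
which simultaneously delivers $\ell(Q_{C^{*}C^{*}})=\nu(Q_{C^{*}C^{*}})$, i.e.\ $Q_{C^{*}C^{*}}\in\cQ^{|C^{*}|}$, and $\ell(Q_{C^{*}C^{*}})=\min_{C\in\mathfrak{C}}\ell(Q_{CC})$, completing the proof. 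The only nontrivial step is the selection of $C^{*}$ via Scozzari--Tardella; once that is done, everything collapses to reading off the chain from Lemma~\ref{max_cliques}.
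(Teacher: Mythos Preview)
Your proof is correct and follows essentially the same sandwich argument as the paper, using Lemma~\ref{max_cliques} in both directions. For the $(\Rightarrow)$ direction the paper avoids the explicit appeal to Theorem~\ref{Scozzari}: since Lemma~\ref{max_cliques} already gives $\min_{C\in\mathfrak{C}}\nu(Q_{CC})=\nu(Q)$, one may simply take $C^{*}$ to be any minimizer of $\nu(Q_{CC})$ over $\mathfrak{C}$, and then the same chain $\ell(Q_{C^{*}C^{*}})\le\nu(Q_{C^{*}C^{*}})=\nu(Q)=\min_{C}\ell(Q_{CC})\le\ell(Q_{C^{*}C^{*}})$ closes the argument without reinvoking Scozzari--Tardella.
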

\begin{proof}
Suppose that \eqref{dnn_sub_exact} holds. By Lemma~\ref{max_cliques},
\[
\min\limits_{C \in \mathfrak{C}} \ell(Q_{CC}) = \min\limits_{C \in \mathfrak{C}} \nu(Q_{CC}) = \nu(Q). 
\]
Then, by \eqref{lb_dnn}, there exists $C^* \in \mathfrak{C}$ such that $\min\limits_{C \in \mathfrak{C}} \ell(Q_{CC}) = \ell(Q_{C^* C^*}) = \nu(Q_{C^* C^*}) = \nu(Q)$, which implies that \eqref{dnn_sub_exact_2} is satisfied.

Conversely, if \eqref{dnn_sub_exact_2} holds, it follows from Lemma~\ref{max_cliques} that
\[
\min\limits_{C \in \mathfrak{C}} \ell(Q_{CC}) = \ell(Q_{C^* C^*}) = \nu(Q_{C^* C^*}) \leq \min\limits_{C \in \mathfrak{C}} \nu(Q_{CC}) = \nu(Q) \leq \nu(Q_{C^* C^*}), 
\]
which implies that \eqref{dnn_sub_exact} holds.
\end{proof}

For instance, for each $C \in \mathfrak{C}$ in Example~\ref{ex6}, since $|C| \leq 4$, we have $Q_{CC} \in \cQ^{|C|}$ by \eqref{diananda}, which implies the existence of a maximal clique (i.e., $C_1$) that satisfies \eqref{dnn_sub_exact_2}, and therefore \eqref{dnn_sub_exact} by Lemma~\ref{min_cliq_DN_exact}. On the other hand, since there is only one maximal clique $C_1$ in Example~\ref{ex5} and $Q_{C_1} = Q \not \in \cQ^5$, Lemma~\ref{min_cliq_DN_exact} implies that the second inequality in \eqref{max_clique_rels} is strict.

\subsection{Matrix Completion and SPN Completable Graphs}

Lemma~\ref{min_cliq_DN_exact} establishes the equivalence of the conditions \eqref{dnn_sub_exact} and \eqref{dnn_sub_exact_2}. However, as illustrated by Example~\ref{ex6}, neither of these conditions implies that $Q \in \cQ^n$, where $\cQ^n$ is given by \eqref{def_Q}.

In this section, we identify an additional condition under which either of the conditions \eqref{dnn_sub_exact} and \eqref{dnn_sub_exact_2} implies that $Q \in \cQ^n$. 

First, we define the matrix completion problem. We mostly follow the discussion in \cite{ref:shaked2016}. Let $V = \{1,\ldots,n\}$ and let $F \subseteq V \times V$ be a set with the following properties.
\begin{equation} \label{prop_set_F}
(i,i) \in F, \quad \forall~i \in V, \quad \textrm{and} \quad (i,j) \in F \Longleftrightarrow (j,i) \in F, \quad 1 \leq i < j \leq n.    
\end{equation}

For a given set $F \subseteq V \times V$ that satisfies \eqref{prop_set_F}, a partial matrix $B \in \cS^n$ is a matrix whose entries $B_{ij}$ are specified if and only if $(i,j) \in F$. For a given set ${\cal K} \subseteq \cS^n$, the matrix completion problem is concerned with finding a matrix $\widehat{B} \in {\cal K}$ such that
\begin{equation} \label{matrix_completion}
\widehat{B}_{ij} = B_{ij}, \forall~(i,j) \in F, \quad \textrm{and} \quad \widehat{B} \in {\cal K}.
\end{equation}
The SPN completion problem is concerned with whether a partial matrix $B \in \cS^n$ is SPN completable, i.e., whether there exists $\widehat{B} \in {\cal K}$ that satisfies \eqref{matrix_completion}, with ${\cal K} = {\cal SPN}^n$. By Lemma~\ref{lem_gen_rels} (iv), if a partial matrix $B$ is SPN completable, then each of its fully specified $r \times r$ principal submatrices should belong to ${\cal SPN}^r$, where $r = 1,\ldots,n$. A partial matrix $B$ that satisfies this necessary condition is called a partial SPN matrix. 

For a given simple undirected graph $G = (V_G,E_G)$, where $V_G = \{1,\ldots,n\}$, one can associate an SPN completion problem, where $B_{ij} = B_{ji}$ is specified if and only if $(i,j) \in E_G$ or $i = j$. Such a matrix $B$ is called a $G$-partial matrix. A graph $G$ is said to be SPN completable if every $G$-partial SPN matrix is SPN completable. 

The following result in~\cite{ref:shaked2016} presents a full characterization of SPN completable graphs.

\begin{theorem}[Shaked-Monderer et al., 2016] \label{spn_comp} 
Let $G = (V_G,E_G)$ be a graph, where $V_G = \{1,\ldots,n\}$. $G$ is SPN completable if and only if every odd cycle in $G$ induces a complete subgraph of $G$.
\end{theorem}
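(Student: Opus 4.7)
The plan is to prove the two directions separately: necessity by exhibiting an explicit obstruction whenever an induced odd cycle of length at least five exists, and sufficiency by induction on the number of unspecified entries.

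For necessity, assume $G$ contains an induced odd cycle $C = (v_1,\ldots,v_{2k+1},v_1)$ with $2k+1 \geq 5$. I would construct the $G$-partial matrix $B$ with $B_{ii}=1$ for every vertex, $B_{v_\ell v_{\ell+1}} = -1$ (indices mod $2k+1$) along the edges of $C$, and $B_{ij} = 0$ for every other edge of $G$. First I would verify that $B$ is partial SPN: any clique $S$ of $G$ meets $V_C$ in at most two vertices, which must be consecutive on $C$ (since $C$ is induced and has length at least five), so $B_{SS}$ is block-diagonal with blocks either $I_r$ or $\begin{pmatrix} 1 & -1 \\ -1 & 1 \end{pmatrix}$, each PSD and hence SPN. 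To rule out any SPN completion, I would restrict to $V_C$ using Lemma~\ref{lem_gen_rels}(iv): if $\widehat{B}$ were SPN, then $\widehat{B}_{V_C V_C} = P + N$ with $P \in \mathcal{PSD}^{2k+1}$ and $N \in \mathcal{N}^{2k+1}$. The constraints $P_{v_\ell v_\ell} \leq 1$, $P_{v_\ell v_{\ell+1}} \leq -1$, and $P_{v_\ell v_{\ell+1}}^2 \leq P_{v_\ell v_\ell} P_{v_{\ell+1} v_{\ell+1}}$ force equality everywhere, so $P_{v_\ell v_\ell}=1$ and $P_{v_\ell v_{\ell+1}}=-1$ on every cycle edge. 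Writing $P = LL^T$ gives $L_{v_{\ell+1}} = -L_{v_\ell}$ for each cycle edge; traversing the odd cycle once yields $L_{v_1} = -L_{v_1}$, hence $L_{v_1}=0$, contradicting $P_{v_1 v_1}=1$.

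For sufficiency, I would induct on $|E_{\overline{G}}|$, the number of non-edges of $G$. The base case $G = K_n$ is trivial, as the partial matrix is already fully specified. For the inductive step (assume $G$ is odd-hole-free and not complete), the goal is to find a non-edge $(i,j)$ and a value $c \in \R$ such that (a) the graph $G' = G + \{(i,j)\}$ remains odd-hole-free, and (b) setting $B_{ij} = c$ produces a $G'$-partial SPN matrix; the inductive hypothesis applied to $G'$ then yields the SPN completion. The natural candidate for the non-edge is a pair of vertices at graph-distance two whose common neighborhood in $G$ induces a clique, playing the role that simplicial pairs play for chordal-graph arguments. Once such a pair is in hand, one is left checking that every newly fully specified principal submatrix, namely those indexed by $\{i,j\} \cup K$ for $K$ a clique of common neighbors, can be made SPN by a suitable choice of $c$; choosing $c$ sufficiently large lets the added rank-one nonnegative part $c e_i e_j^T + c e_j e_i^T$ (corrected to be symmetric and used jointly with the diagonal contributions) absorb the new entries while the PSD part is inherited from the induction hypothesis.

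The main obstacle is the inductive step in the sufficiency direction, which couples combinatorics with algebra. On the combinatorial side, one must argue that any odd-hole-free non-complete graph contains a non-edge whose addition preserves the odd-hole-free property and whose endpoints have a well-behaved common neighborhood; I expect to carry this out by a case analysis on whether $G$ has a nontrivial clique separator, using in the irreducible case the defining property that every induced odd cycle is a triangle to exclude the creation of new odd holes. On the algebraic side, one must realize the entry assignment so that SPN-compatibility is preserved on every newly fully specified principal submatrix; the identity ${\cal SPN}^r = {\cal COP}^r$ for $r \leq 4$ from \eqref{diananda} handles the small new blocks, while for larger blocks one exploits the freedom provided by the nonnegative summand together with Lemma~\ref{lem_gen_rels}(iv) applied to the already-completed restriction of $B$ on the common-neighborhood clique.
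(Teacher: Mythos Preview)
The paper does not prove this theorem. Theorem~\ref{spn_comp} is quoted from Shaked-Monderer et al.~\cite{ref:shaked2016} and used as a black box; no argument for it appears anywhere in the manuscript. There is therefore nothing to compare your proposal against.

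For what it is worth as a standalone attempt: your necessity direction is correct and complete. The sufficiency direction, however, is only a sketch. You yourself flag the two genuine gaps: (i) you have not shown that every non-complete graph in which all odd cycles induce complete subgraphs contains a non-edge whose addition preserves this property, and (ii) you have not shown that a single value of $c$ can be chosen so that \emph{every} newly fully specified principal submatrix is simultaneously SPN. The phrase ``choosing $c$ sufficiently large'' does not do the work you need, since the new entry $B_{ij}=c$ appears as an \emph{off-diagonal} entry, and a large positive off-diagonal entry is not automatically absorbed by an SPN decomposition when the diagonal entries are fixed; you would need to argue carefully that the nonnegative part can accommodate it without destroying positive semidefiniteness of the remainder. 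As written, the sufficiency argument is a plan rather than a proof.
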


Recall that each of the two inequalities in \eqref{max_clique_rels} is satisfied with equality if and only if $Q \in \cQ^n$ by Lemma~\ref{max_cliques}. Furthermore, Lemma~\ref{min_cliq_DN_exact} gives a full characterization of instances for which the latter inequality in \eqref{max_clique_rels} is satisfied with equality. Example~\ref{ex6} illustrates that there exists a matrix $Q \in \cS^5$ that satisfies the conditions of Lemma~\ref{min_cliq_DN_exact} but $Q \not \in \cQ^5$. In the next result, for a given $Q \in \cS^n$, under the additional assumption that the convexity graph $G_Q = (V_Q,E_Q)$ is SPN completable, we show that the exactness of the second inequality in \eqref{max_clique_rels} implies the exactness of the first inequality, thereby establishing $Q \in \cQ^n$.

\begin{proposition} \label{SPNcomp_nec_suf}
Let $Q \in \cS^n$ be a matrix such that its convexity graph $G_Q = (V_Q,E_Q)$ is SPN completable. Then, $Q \in \cQ^n$, where $\cQ^n$ is given by \eqref{def_Q}, if and only if \eqref{dnn_sub_exact} is satisfied.
\end{proposition}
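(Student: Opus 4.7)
The ($\Rightarrow$) direction is immediate from Lemma~\ref{max_cliques}: if $\ell(Q) = \nu(Q)$, then both inequalities in \eqref{max_clique_rels} collapse to equalities, and in particular \eqref{dnn_sub_exact} holds. Note that this direction does not require SPN completability of $G_Q$.

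For the ($\Leftarrow$) direction, set $M := Q - \nu(Q) E$. It suffices to show $M \in {\cal SPN}^n$, for then Lemma~\ref{charac_1} (applied to any $x^* \in \Omega(Q)$) yields $Q \in \cQ_{x^*} \subseteq \cQ^n$. By assumption, $\ell(Q_{CC}) \geq \nu(Q)$ for every $C \in \mathfrak{C}$, so from the attained optimal solution of (DN-D) corresponding to $Q_{CC}$ we have $Q_{CC} - \ell(Q_{CC}) E \in {\cal SPN}^{|C|}$, and adding the nonnegative matrix $(\ell(Q_{CC}) - \nu(Q)) E$ yields $M_{CC} \in {\cal SPN}^{|C|}$. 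By Lemma~\ref{lem_gen_rels} (iv), every clique principal submatrix of $M$ (indexed by any clique of $G_Q$, not merely the maximal ones) is then SPN, so $M$, viewed as a $G_Q$-partial matrix by retaining only the entries at positions in $E_{G_Q} \cup \{(i,i) : i \in V_Q\}$, is a $G_Q$-partial SPN matrix.

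Since $G_Q$ is SPN completable, Theorem~\ref{spn_comp} supplies some completion $\widehat{M} \in {\cal SPN}^n$ that agrees with $M$ on every specified entry. To bridge from $\widehat{M}$ to $M$ itself, the plan is to exploit the following positivity feature at the non-edges: for each $(i,j) \notin E_{G_Q}$ with $i \neq j$, the defining inequality $2Q_{ij} \geq Q_{ii} + Q_{jj}$ of the convexity graph, combined with $\nu(Q) \leq Q_{kk}$ for every $k$ (since each $e_k$ is feasible for (StQP)), gives
\[
M_{ij} \;=\; Q_{ij} - \nu(Q) \;\geq\; \frac{M_{ii} + M_{jj}}{2} \;\geq\; 0.
\]
Using the block-clique structure of $G_Q$ (the combinatorial content of SPN completability furnished by Theorem~\ref{spn_comp}), one selects the SPN completion so that $\widehat{M}_{ij} \leq M_{ij}$ at every non-edge; then the difference $M - \widehat{M}$ is supported on the non-edges and entrywise nonnegative, whence $M - \widehat{M} \in \cN^n$ and $M = \widehat{M} + (M - \widehat{M}) \in {\cal SPN}^n + \cN^n = {\cal SPN}^n$, completing the proof.

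The main obstacle is this final selection step. I would carry it out by induction on the block tree of $G_Q$: peel off a leaf maximal clique $C^*$ glued to the rest of $G_Q$ at a single cut vertex $v$, apply the inductive hypothesis to the principal submatrix of $M$ indexed by $V_Q \setminus (C^* \setminus \{v\})$ (whose convexity graph is again SPN completable and inherits all the relevant hypotheses), and combine the resulting SPN decomposition with a decomposition of $M_{C^* C^*}$, using the freedom in the two decompositions at $v$ to match their $(v,v)$ entries. The nonnegativity of $M$ at the non-edges crossing between $C^* \setminus \{v\}$ and $V_Q \setminus C^*$, noted above, is exactly what allows those entries to be absorbed into the nonnegative summand of the global decomposition.
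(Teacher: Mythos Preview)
Your forward direction and the setup of the reverse direction---showing that $M_{CC} \in {\cal SPN}^{|C|}$ for every maximal clique $C$ and hence that $M$ is a $G_Q$-partial SPN matrix---match the paper's argument exactly.

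The gap is in the final step. You assert that SPN completability of $G_Q$ gives you a ``block-clique structure'' on which to run an induction peeling off leaf maximal cliques attached at cut vertices. This is false: by Theorem~\ref{spn_comp}, SPN completability is equivalent to the condition that every odd cycle induces a complete subgraph, which is strictly weaker than being block-clique. The $4$-cycle $C_4$ is SPN completable (it has no odd cycles) but is biconnected and not a clique, so it has no cut vertices at all and your induction never gets started. More generally, any bipartite graph is SPN completable, and most are not block-clique.

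The paper sidesteps the need to control the unspecified entries of the completion $\widehat{M}$ by working instead with the PSD part of \emph{any} SPN completion. Write $\widehat{M} = \widehat{P} + \widehat{N}$ with $\widehat{P} \in {\cal PSD}^n$, $\widehat{N} \in \cN^n$, and without loss of generality $\widehat{N}_{ii} = 0$ for all $i$. Then $\widehat{P}_{ii} = M_{ii}$, and positive semidefiniteness alone forces $\widehat{P}_{ij} \leq \tfrac{1}{2}(\widehat{P}_{ii} + \widehat{P}_{jj}) = \tfrac{1}{2}(M_{ii} + M_{jj}) \leq M_{ij}$ at every non-edge $(i,j)$ (the last inequality being exactly the definition of a non-edge in $G_Q$). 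On the edges, $M_{ij} - \widehat{P}_{ij} = \widehat{N}_{ij} \geq 0$. Hence $M - \widehat{P} \in \cN^n$ and $M = \widehat{P} + (M - \widehat{P}) \in {\cal SPN}^n$. This is precisely the ``positivity feature at the non-edges'' you already isolated; you just needed to apply it to $\widehat{P}$ rather than to $\widehat{M}$.
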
 
\begin{proof}
Let $Q \in \cS^n$ be a matrix such that its convexity graph $G_Q = (V_Q,E_Q)$ is SPN completable. By Lemma~\ref{max_cliques}, if $Q \in \cQ^n$, then the condition \eqref{dnn_sub_exact} is satisfied.

Conversely, suppose that \eqref{dnn_sub_exact} is satisfied. Then, 
\[
\min\limits_{C \in \mathfrak{C}} \ell(Q_{CC}) = \nu(Q),
\]
where $\mathfrak{C}$ is the collection of all maximal cliques of $G_Q$. For each $C \in \mathfrak{C}$, we have $Q_{CC} - \ell(Q_{CC}) E \in {\cal SPN}^{|C|}$ by (DN-D) corresponding to $Q_{CC}$. Since $\ell(Q_{CC}) \geq \nu(Q)$, it follows that 
\begin{equation} \label{max_cliques_spn}
Q_{CC} - \nu(Q) E = Q_{CC} - \ell(Q_{CC}) E + (\ell(Q_{CC}) - \nu(Q))E \in {\cal SPN}^{|C|}, \quad \forall~C \in \mathfrak{C}.
\end{equation}
Consider the following $G_Q$-partial matrix $B \in \cS^n$:
\[
B_{ij} = \begin{cases}
Q_{ij} - \nu(Q), & \textrm{if} ~(i,j) \in E_Q, \\
Q_{ii} - \nu(Q), & i = 1,\ldots,n.
\end{cases}
\]
Note that every fully specified principal submatrix of $B$ corresponds to a maximal clique $C \in \mathfrak{C}$. By \eqref{max_cliques_spn}, it follows that $B$ is a $G_Q$-partial SPN matrix. Since $G_Q$ is an SPN completable graph by the hypothesis, there exists a matrix $\widehat{B} \in \cS^n$ such that $\widehat{B} \in {\cal SPN}^n$ and 
\begin{equation} \label{A_hat_rels}
\widehat{B}_{ij} = B_{ij}, \quad (i,j) \in E_Q, \quad \widehat{B}_{ii} = B_{ii}, \quad i = 1,\ldots,n.     
\end{equation}
Therefore, there exist $\widehat{P} \in {\cal PSD}^n$ and $\widehat{N} \in \cN^n$ such that $\widehat{B} = \widehat{P} + \widehat{N}$. Without loss of generality, we may assume that $\widehat{N}_{ii} = 0$ for each $i = 1,\ldots,n$ by simply increasing all the diagonal elements of $\widehat{P}$ accordingly if necessary. Therefore, 
\begin{eqnarray} \label{P_A_rels}
\widehat{P}_{ii} & = & \widehat{B}_{ii} - \widehat{N}_{ii} =  \widehat{B}_{ii} =  Q_{ii} - \nu(Q), \quad i = 1,\ldots,n, \label{P_A_rels_1}\\
\widehat{P}_{ij} & = & \widehat{B}_{ij} - \widehat{N}_{ij} \leq \widehat{B}_{ij} =  Q_{ij} - \nu(Q), \quad (i,j) \in E_Q. \label{P_A_rels_2}
\end{eqnarray}
Let us fix $(i,j)$ such that $i \neq j$ and $(i,j) \not \in E_Q$. By the definition of $E_Q$ in \eqref{edge_def}, we have $2 Q_{ij} \geq Q_{ii} + Q_{ij}$. Therefore, for any such $(i,j)$, since $\widehat{P} \in {\cal PSD}^n$,
\begin{equation} \label{P_A_rels_3}
\widehat{P}_{ij} \leq \frac{1}{2} \left( \widehat{P}_{ii} + \widehat{P}_{jj} \right) = \frac{1}{2} \left(Q_{ii} + Q_{jj}\right) - \nu(Q) \leq Q_{ij} - \nu(Q) 
\end{equation}
by \eqref{P_A_rels_1}. Finally, combining \eqref{P_A_rels_1}, \eqref{P_A_rels_2}, and \eqref{P_A_rels_3}, we conclude that there exists a matrix $N \in \cN^n$ such that $N_{ii} = 0$ for each $i = 1,\ldots,n$ and 
\[
Q - \nu(Q) E = \widehat{P} + N,
\]
which implies that $Q - \nu(Q) E \in {\cal SPN}^n$. Therefore, by (DN-D), we obtain $\ell(Q) \geq \nu(Q)$, which, together with \eqref{lb_dnn}, implies that $\ell(Q) = \nu(Q)$, or equivalently, that $Q \in \cQ^n$. 
\end{proof}

\begin{example} \label{ex7}
Consider Example~\ref{ex4}. Note that $Q \in \cQ^5 \backslash \left(\cQ^5_1 \cup \cQ^5_2 \cup \cQ^5_3\right)$. The convexity graph $G_Q = (V_Q,E_Q)$ is given by $V_Q = \{1,2,3,4,5\}$ and 
\[
E_Q =  \left\{(3,4),(4,5) \right\}.
\]
Clearly, $G_Q$ is SPN completable since it does not contain any odd cycle. The set of maximal cliques of $G_Q$ is given by $\mathfrak{C} = \{C_1,C_2\}$, where $C_1 = \{3,4\}$ and $C_2 = \{4,5\}$. Since $|C_1| = |C_2| = 2 \leq 4$, it follows that $\ell(Q_{C_1 C_1}) = \nu(Q_{C_1 C_1}) = 1.5$ and $\ell(Q_{C_2 C_2}) = \nu(Q_{C_2 C_2}) = 1$. Therefore, $\min\limits_{C \in \mathfrak{C}} \ell(Q_{CC}) = \min\limits_{C \in \mathfrak{C}} \nu(Q_{CC}) = \nu(Q) = 1$. Since $G_Q$ is SPN completable, it follows from Proposition~\ref{SPNcomp_nec_suf} that $Q \in \cQ^5$.
\end{example}

As illustrated by Example~\ref{ex7}, Proposition~\ref{SPNcomp_nec_suf} may be helpful for identifying a matrix $Q \in \cQ^n \backslash \left(\cQ^n_1 \cup \cQ^n_2 \cup \cQ^n_3\right)$. An interesting question is whether every matrix $Q \in \cQ^n \backslash \left(\cQ^n_1 \cup \cQ^n_2 \cup \cQ^n_3\right)$ satisfies the conditions of Proposition~\ref{SPNcomp_nec_suf}. We close this section by the following counterexample.

\begin{example} \label{ex8}
Let
\[
Q = \begin{bmatrix} 
2 & 0 & 0 & 2 & 1\\
0 & 2 & 0 & 2 & 2\\
0 & 0 & 2 & 0 & 2\\
2 & 2 & 0 & 2 & 0\\
1 & 2 & 2 & 0 & 2
\end{bmatrix}.
\]
The convexity graph $G_Q = (V_Q,E_Q)$ is given by
\begin{center}
\begin{tikzpicture}
\node[vertex] (4) at (0,0) {4};
\node[vertex] (3) at (1,0) {3};
\node[vertex] (2) at (1.5,1) {2};
\node[vertex] (1) at (0.5,2) {1};
\node[vertex] (5) at (-0.5,1) {5};
\path[color=black]
(1) edge (2)
(2) edge (3)
(3) edge (4)
(4) edge (5)
(1) edge (3)
(5) edge (1);
\end{tikzpicture}
\end{center}
Therefore, $\mathfrak{C} = \{C_1,C_2,C_3,C_4,C_5\}$, where 
\[
C_1 = \{1,2,3\}, \quad C_2 = \{1,5\}, \quad C_3 =\{3,4\}, \quad C_4 = \{4,5\}.
\]
In this example, an optimal solution is given by $x^* = [1/3,1/3,1/3,0,0]^T$ and $\nu(Q) = \nu(Q_{C_1 C_1}) = \ell(Q_{C_1 C_1}) = \min\limits_{C \in \mathfrak{C}} \ell(Q_{CC}) = 2/3$. One can numerically verify that $\ell(Q) = \nu(Q) = 2/3$, which implies that $Q \in \cQ^5$. 

On the other hand, it is easy to verify that $Q \not \in \cQ^5_1$. For $d = [-1, -1, -1, 2, 1]^T \in e^\perp$, we have $d^T Q d = -10 < 0$, which implies that $Q \not \in \cQ^5_2$. By a similar argument at the end of Section~\ref{max_w_clique}, we obtain $Q \not \in \cQ^5_3$. Finally, since the subgraph induced by the odd cycle consisting of the vertices $\{1,2,3,4,5\}$ is not a complete graph, $G_Q$  
is not SPN completable by Theorem~\ref{spn_comp}. It follows that $Q$ does not satisfy the conditions of Proposition~\ref{SPNcomp_nec_suf}.
\end{example}

\section{Standard Quadratic Programs with Positive Gaps} \label{pos_gap}

In this section, we focus on the set of instances of (StQP) for which there is a positive gap between the lower bound arising from the doubly nonnegative relaxation and the optimal value of (StQP), i.e., 
\begin{equation} \label{cQ_comp}
\cS^n \backslash \cQ^n = \left\{Q \in \cS^n: \ell(Q) < \nu(Q)\right\}. \end{equation}

We first present an algebraic characterization of such instances. Based on this characterization, we then propose a procedure for generating such an instance. 

\subsection{An Algebraic Characterization}

The next result gives a complete algebraic characterization of the set $\cS^n \backslash \cQ^n$.
    
\begin{proposition} \label{pos_gap_char}
Let $Q \in \cS^n$. Then $Q \in \cS^n \backslash \cQ^n$ if and only if there exist $\lambda \in \R$ and $M \in \partial ~{\cal COP}^n \backslash {\cal SPN}^n$ such that
\begin{equation} \label{inexact_decomp}
Q = \lambda E + M.    
\end{equation}
Furthermore, for any decomposition given by \eqref{inexact_decomp}, we have $\lambda = \nu(Q)$ and $\Omega(Q) = \mathbf{V}^M$, where $\mathbf{V}^M$ is given by \eqref{def_M_zeros}.
\end{proposition}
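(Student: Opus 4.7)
The plan is to prove both directions by setting $\lambda = \nu(Q)$ and $M = Q - \nu(Q) E$, and to use Theorem~\ref{nec_suff_stqp} (Bomze's characterization) together with the dual formulation (DN-D) as the main tools.

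For the forward direction, suppose $Q \in \cS^n \backslash \cQ^n$, so $\ell(Q) < \nu(Q)$. Pick any $x^* \in \Omega(Q)$ (which exists because $\Delta_n$ is compact), set $\lambda = \nu(Q) = (x^*)^T Q x^*$, and define $M = Q - \lambda E$. First, by Theorem~\ref{nec_suff_stqp}, $M \in {\cal COP}^n$. Since $(x^*)^T M x^* = (x^*)^T Q x^* - \nu(Q) (e^T x^*)^2 = 0$ with $x^* \in \Delta_n$, the definition \eqref{cop_bd} gives $M \in \partial \, {\cal COP}^n$. To show $M \not\in {\cal SPN}^n$, I would argue by contradiction: if $M \in {\cal SPN}^n$, then $(\lambda, M)$ would be feasible for (DN-D), giving $\ell(Q) \geq \lambda = \nu(Q)$, contradicting the assumption $\ell(Q) < \nu(Q)$.

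For the reverse direction, suppose $Q = \lambda E + M$ with $M \in \partial \, {\cal COP}^n \setminus {\cal SPN}^n$. I would first establish $\lambda = \nu(Q)$ using both halves of the definition of $\partial \, {\cal COP}^n$: since $M \in {\cal COP}^n$, every $x \in \Delta_n$ satisfies $x^T Q x = \lambda + x^T M x \geq \lambda$, giving $\nu(Q) \geq \lambda$; since $M \in \partial \, {\cal COP}^n$, there exists $u \in \Delta_n$ with $u^T M u = 0$, which yields $u^T Q u = \lambda$, giving $\nu(Q) \leq \lambda$. Combining, $\nu(Q) = \lambda$. Then, to conclude $Q \not\in \cQ^n$, I again contradict: if $\ell(Q) = \nu(Q) = \lambda$, then by the attainment of an optimal solution in (DN-D) (Slater's condition), there exists $S^\star \in {\cal SPN}^n$ with $\lambda E + S^\star = Q$, forcing $S^\star = M \in {\cal SPN}^n$, which is impossible.

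The final assertions then follow quickly. The identity $\lambda = \nu(Q)$ for any decomposition is exactly what was just proved in the reverse direction, and is independent of how the decomposition was obtained. For $\Omega(Q) = \mathbf{V}^M$, I would simply chase definitions: for $x \in \Delta_n$, $x^T Q x = \lambda + x^T M x$, so $x^T Q x = \nu(Q) = \lambda$ if and only if $x^T M x = 0$, which by \eqref{def_M_zeros} is exactly $x \in \mathbf{V}^M$.

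I do not expect a genuine obstacle here, since the proposition is essentially a repackaging of (i) the copositive optimality characterization of Theorem~\ref{nec_suff_stqp}, (ii) attainment of optima in (DN-D), and (iii) the boundary/zero-set definitions \eqref{cop_bd} and \eqref{def_M_zeros}. The only subtlety worth care is ensuring, in the reverse direction, that one really needs $M \in \partial \, {\cal COP}^n$ (not merely ${\cal COP}^n$) to force $\nu(Q) = \lambda$ rather than a strict inequality; this is why both defining conditions of the boundary must be invoked separately.
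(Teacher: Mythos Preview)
Your proposal is correct and follows essentially the same route as the paper's proof. The only cosmetic difference is that where the paper invokes Lemma~\ref{charac_1} (and Lemma~\ref{tech_res_cop}(i)) to handle the ${\cal SPN}^n$ membership, you argue directly via feasibility and attainment in (DN-D), which is precisely what those lemmas encapsulate.
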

\begin{proof}
Let $Q \in \cS^n$ be such that $Q \in \cS^n \backslash \cQ^n$. Let $x^* \in \Omega(Q)$ be any optimal solution. Let us define $M = Q - \left( (x^*)^T Q x^* \right) E$. By Lemma~\ref{tech_res_cop},  $M \in \partial ~{\cal COP}^n$, where $\partial ~{\cal COP}^n$ is given by \eqref{cop_bd}. Since $Q \not \in \cQ^n$, it follows that $Q \not \in \cQ_{x^*}$, where $\cQ_{x^*}$ is given by \eqref{def_Qx}. By Proposition~\ref{charac_1}, $M \not \in {\cal SPN}^n$. It follows that $Q = \lambda E + M$, where $\lambda = (x^*)^T Q x^*$ and $M \in \partial ~{\cal COP}^n \backslash {\cal SPN}^n$.

Conversely, suppose that $Q = \lambda E + M$, where $\lambda \in \R$ and $M \in \partial ~{\cal COP}^n \backslash {\cal SPN}^n$. For any $x \in \Delta_n$, since $M \in {\cal COP}^n$,
\[
x^T Q x = \lambda + x^T M x \geq \lambda,
\]
which implies that $\nu(Q) \geq \lambda$. 
Since $M \in \partial ~{\cal COP}^n$, it follows that $\mathbf{V}^M \neq \emptyset$, where $\mathbf{V}^M$ is given by \eqref{def_M_zeros}. Then, for any $x^* \in \mathbf{V}^M$, we have $(x^*)^T Q x^* = \lambda + (x^*)^T M x^* = \lambda$, which implies that $\nu(Q) = \lambda$ and $x^* \in \Omega(Q)$. Suppose, for a contradiction, that $Q \in \cQ^n$. Then, $Q \in \cQ_{x^*}$ by \eqref{def_Qx}. By Proposition~\ref{charac_1}, $Q - ((x^*)^T Q x^*) E = Q - \nu(Q) E = Q - \lambda E = M \in {\cal SPN}^n$, which contradicts the hypothesis that $M \in \partial ~{\cal COP}^n \backslash {\cal SPN}^n$. Therefore, $Q \in \cS^n \backslash \cQ^n$.

For the last assertion, the argument in the previous paragraph already establishes that $\nu(Q) = \lambda$ and $\mathbf{V}^M \subseteq \Omega(Q)$. Conversely, since $M \in {\cal COP}^n$, we have $x^T Q x = \lambda + x^T M x > \lambda$ for any $x \in \Delta_n \backslash \mathbf{V}^M$, which implies that $\Omega(Q) \subseteq \mathbf{V}^M$, thereby establishing $\Omega(Q) = \mathbf{V}^M$.
\end{proof}

\subsection{Generating Standard Quadratic Programs with a Positive Gap}

Note that Proposition~\ref{pos_gap_char} presents a complete algebraic characterization of the set of instances of (StQP) with a positive gap. For a given $Q \in \cS^n$, checking if a decomposition given by \eqref{inexact_decomp} exists is equivalent to solving the corresponding (StQP) instance, which is clearly an intractable problem. On the other hand, by relying on this characterization, we propose a procedure to generate an instance of (StQP) with a positive gap.

By Proposition~\ref{pos_gap_char}, the main ingredient is a matrix $M \in \partial ~{\cal COP}^n \backslash {\cal SPN}^n$. Recall that ${\cal COP}^n = {\cal SPN}^n$ for each $n \leq 4$ by \eqref{diananda}. Therefore, $n = 5$ is the smallest dimension for which ${\cal COP}^n \backslash {\cal SPN}^n \neq \emptyset$. To that end, recall the well-known Horn matrix (see, e.g.,~\cite{hall1963copositive}) given by
\begin{equation} \label{horn}
H = \begin{bmatrix} 1 & -1 & 1 & 1 & -1 \\-1 & 1 & -1 & 1 & 1 \\1 & -1 & 1 & -1 & 1 \\1 & 1 & -1 & 	1 & -1 \\ -1 & 1 & 1 & -1 & 1 \end{bmatrix} \in \partial ~{\cal COP}^5 \backslash {\cal SPN}^5,
\end{equation}
and
\begin{equation} \label{horn_zeros}
\left\{\frac{1}{2}\left(e_i + e_j\right) \in \R^5: (i,j) \in \left\{(1,2),(2,3),(3,4),(4,5),(5,1)\right\}\right\} \subseteq \mathbf{V}^H, 
\end{equation}
where $\mathbf{V}^H$ is given by \eqref{def_M_zeros}.
Note that $H \in \cS^5 \backslash \cQ^5$ by Proposition~\ref{pos_gap_char}. Indeed, we have $\ell(H) \approx -0.1056$ whereas $\nu(H) = 0$.

For any $n \geq 5$, let $B \in \cS^{n - 5}$ and let $C \in \R^{(n-5) \times 5}$ be two matrices such that $B  \in {\cal COP}^{n-5}$ and each entry of $C$ is nonnegative. Note, in particular, that one may choose $B \in {\cal SPN}^{n-5}$ (or even $B \in \cN^{n-5}$) and $C = 0$. Let us define 
\begin{equation} \label{def_M_hat}
\widehat{M} = \begin{bmatrix} B & C \\ C^T & H \end{bmatrix} \in \cS^n.
\end{equation}
By~\cite[Lemma 3.4(a)]{shaked2016spn}, it follows that $\widehat{M} \in {\cal COP}^n$. Finally, let $J \in \R^{n \times n}$ be an arbitrary permutation matrix and let $D \in \cS^n$ be an arbitrary diagonal matrix with strictly positive entries. Let us define
\begin{equation} \label{def_M_matrix}
M =  J D \widehat{M} D J^T,    
\end{equation}
where $\widehat{M}$ is given by \eqref{def_M_hat}. By Lemma~\ref{lem_gen_rels} (ii) and (iii), $M \in {\cal COP}^n$ since $\widehat{M} \in {\cal COP}^n$. Furthermore, $M \not \in {\cal SPN}^n$ since, otherwise, this would imply that $H \in {\cal SPN}^5$ by Lemma~\ref{lem_gen_rels} (ii), (iii), and (iv). Finally, we claim that $M \in \partial ~{\cal COP}^n$. To see this, let $u \in \mathbf{V}^H$, and let us define $\widehat{u} = [0^T,u^T]^T \in \Delta_n$, which implies that $\widehat{u}^T \widehat{M} \widehat{u} = 0$, i.e., $\widehat{M} \in \partial ~{\cal COP}^n$. Therefore, we obtain $v^T M v = 0$, where $v = J D^{-1} \widehat{u} \in \R^n_+ \backslash \{0\}$. Therefore, defining $w = (1/(e^T v))v \in \Delta_n$, we have $w \in \mathbf{V}^M$, where $\mathbf{V}^M$ is defined as in \eqref{def_M_zeros}. It follows that $M \in \partial ~{\cal COP}^n \backslash {\cal SPN}^n$. 

Finally, by picking an arbitrary real number $\lambda \in \R$ and defining $Q = \lambda E + M$, we ensure that $Q \in \cS^n \backslash \cQ^n$, $\nu(Q) = \lambda$, and $\Omega(Q) = \mathbf{V}^M$ by Proposition~\ref{pos_gap_char}. 

We close this section by making two observations. First, suppose that $n \geq 6$. By choosing $B = 0$ in \eqref{def_M_hat} and $P = D = I$ in \eqref{def_M_matrix}, we can guarantee that any $x \in \Delta_n$ of the form $x = [\hat{x}^T,0^T]^T$, where $\hat{x} \in \Delta_{n-5}$ satisfies $x \in \mathbf{V}^M$. Then, by Proposition~\ref{pos_gap_char}, any such $x \in \Delta_n$ would be an optimal solution of the (StQP) instance corresponding to $Q = \lambda E + M$ for any $\lambda \in \R$. Therefore, for $n \geq 6$ and for any $x \in \Delta_n$ such that $|A(x)| \leq n - 5$, where $A(x)$ is given by \eqref{def_P}, one can construct a matrix $Q \in \cS_x \backslash \cQ_x$, where $\cS_x$ and $\cQ_x$ are given by \eqref{def_Sx} and \eqref{def_Qx}, respectively. Second, we 
note that the Horn matrix $H$ in the definition \eqref{def_M_hat} can be replaced by any extreme ray of ${\cal COP}^5$ that does not belong to ${\cal SPN}^5$, which were fully characterized in \cite[Theorem 3.1]{hildebrand2012extreme}.

\section{Concluding Remarks} \label{conc}

In this paper, we studied the doubly nonnegative relaxations of standard quadratic programs. We presented characterizations of instances of (StQP) that admit an exact relaxation as well as those with a positive gap. Both of our characterizations can be used as algorithmic procedures to construct an instance of (StQP) with a prespecified optimal solution, for which the doubly nonnegative relaxation is either exact or has a positive gap. In addition, we explicitly identified three families of instances with exact relaxations. We also established several properties between the maximal cliques of the convexity graph and the tightness of the relaxation.

For a given $Q \in \cS^n$, consider an exact optimal solution of (DN-D), which satisfies $Q = \ell(Q) E + P + N$, where $P \in {\cal PSD}^n$ and ${\cal N} \in \cN^n$. We can check if there exists $x \in \Delta_n$ such that $Q \in \cQ_x$, where $\cQ_x$ is given by \eqref{def_Qx}, by solving the following feasibility problem:
\[
Px = 0, \quad x^T N x = 0, \quad x \in \Delta_n.
\]
This problem can easily be cast as the following mixed integer linear feasibility problem:
\[
\begin{array}{lcl}
Px & = & 0, \\
e^T x & = & 1, \\
x_j & \leq & y_j, \quad j = 1,\ldots,n,\\
x_j & = & 0, \quad j \in \{1,\ldots,n\} \textrm{ s.t. } N_{jj} > 0,\\
y_i + y_j & \leq & 1, \quad 1 \leq i < j \leq n \textrm{ s.t. } N_{ij} > 0,\\
x & \geq & 0,\\
y_j & \in & \{0,1\}, \quad j = 1,\ldots,n.
\end{array}
\]
However, this procedure requires an exact solution of the dual problem (DN-D) and does not shed light on the existence of a polynomial-time algorithm for the membership problem in $\cQ^n$. 

Another interesting research direction is the investigation of the topological properties of $\cQ^n$ as well as the set $\cS^n \backslash \cQ^n$. We intend to study these problems in the near future.

\bibliographystyle{abbrv}
\bibliography{references}
\end{document}